\numberwithin{equation}{section}
\theoremstyle{plain}
	\newtheorem{theorem}{Theorem}
	\newtheorem{lemma}[theorem]{Lemma}
	\newtheorem{corollary}[theorem]{Corollary}
	\newtheorem{proposition}[theorem]{Proposition}
\theoremstyle{definition}
	\newtheorem{definition}[theorem]{Definition}
\theoremstyle{remark}
\newcommand{\Z}{\mathbb Z}
\newcommand{\R}{\mathbb R}
\newcommand{\C}{\mathbb C}
\newcommand{\U}{\mathrm U}
\newcommand{\Herm}{\mathrm{Herm}}
\newcommand{\E}{\mathbb{E}}
\newcommand{\dv}{\mathrm{d}}
\newcommand{\tr}{\mathrm{Tr}\,}
\newcommand{\eig}{\mathrm{eig}}
\newcommand{\diag}{\mathrm{diag}}
\newcommand{\s}{\mathcal S}
\newcommand{\f}{\mathcal F}
\newcommand{\m}{\mathcal M}
\newcommand{\sgn}{\mathrm{sgn}}
\newcommand{\la}{\left\langle}
\newcommand{\ra}{\right\rangle}
\begin{document}

\begin{frontmatter}

\title{Stable distributions and domains of attraction for unitarily invariant Hermitian random matrix ensembles}

\runtitle{Stable distributions and domains of attraction for unitarily invariant Hermitian random matrix ensembles}

\begin{aug}
\author[A]{\inits{MK}\fnms{Mario} \snm{Kieburg}\ead[label=e1]{m.kieburg@unimelb.edu.au}}
\and
\author[A,B]{\inits{JZ}\fnms{Jiyuan} \snm{Zhang}\ead[label=e2]{jiyuanzhang.ms@gmail.com}}
\address[A]{School of Mathematics and Statistics, University of Melbourne, 813 Swanston Street, Parkville, Melbourne VIC 3010, Australia. \printead{e1,e2}}
\address[B]{Department of Mathematics, KU Leuven, Celestijnenlaan 200B, B-3001 Leuven, Belgium.}
\end{aug}

\begin{abstract}
We consider random matrix ensembles on the set of Hermitian matrices that are heavy tailed, in particular not all moments exist, and that are invariant under the conjugate action of the unitary group. The latter property entails that the eigenvectors are Haar distributed and, therefore, factorise from the eigenvalue statistics. We prove a classification for stable matrix ensembles of this kind of matrices represented in terms of matrices, their eigenvalues and their diagonal entries with the help of the classification of the multivariate stable distributions and the harmonic analysis on symmetric matrix spaces. Moreover, we identify sufficient and necessary conditions for their domains of attraction. To illustrate our findings we discuss for instance elliptical invariant random matrix ensembles and P\'olya ensembles, the latter playing a particular role in matrix convolutions. As a byproduct we generalise the derivative principle on the Hermitian matrices to general tempered distributions. This principle relates the joint probability density of the eigenvalues and the diagonal entries of the random matrix.
\end{abstract}

\begin{abstract}[language=french]
\
\end{abstract}

\begin{keyword}
\kwd{random matrices}
\kwd{heavy tails}
\kwd{central limit theorems}
\kwd{spherical transform}
\kwd{central limit theorems}
{\bf MSC:} 43A90; 60B20; 60E07; 60E10; 60F05; 62H05
\end{keyword}

\end{frontmatter}

\maketitle

\section{Introduction}

\subsection{Introductory Remarks}

Heavy-tailed random matrices have been of interest, lately, since they have applications in machine learning~\cite{MM2019,MM2021}, disordered systems~\cite{BCP2008,BT2012}, quantum field theory~\cite{Kanazawa2016}, finance~\cite{BJNPZ2001,BJNPZ2004,MS2003,AFV2010,MSG2014} and statistics in general~\cite{BGW2006,HM2019,Heiny2019,HM2017,Oymak2017,Minsker2018}. Especially, Wigner matrices with heavy-tailed and independently distributed matrix entries have been studied in detail~\cite{CB1994,Soshnikov2004,BBP2007,BJ2011,BGM2014,BJNPZ2007,BG2008,ABP2009,Vershynin2012,BM2016,TBT2016,AM2001,BG2017,Male2017,PSFG2010}. Let us recall that a heavy-tailed random variable is one where not all generalised moments (exponents do not need to be positive integers). This has drastic consequences for the spectral statistics of random matrices. For instance, it was shown~\cite{Soshnikov2004,BBP2007,ABP2009,TBT2016,BT2012} that heavy-tailed Wigner matrices show a transition to Poisson statistics of the eigenvalues in the tail of the level density when the matrix dimension tends to infinity. The corresponding eigenvectors become localised for these eigenvalues~\cite{TBT2016,BG2017,BT2012}. Other heavy-tailed ensembles where used for band matrices~\cite{BP2014}, correlated matrices~\cite{BBP2007,WWKK2016,GS2020} as well as group invariant random matrices~\cite{WF2000,Tierz2001,BJJNGZ2002,AV2008,AAV2009,CM2009,CM2010}.

Very recently, it was shown~\cite{KM2021} that also unitarily invariant Hermitian random matrices can exhibit Poisson statistics or a mixture of Poisson and sine-kernel statistics in the tail of the level density. This was corroborated by Monte-Carlo simulations and supersymmetry computations. Thus, the Poisson statistics is not restricted to the localisation of the eigenvectors. Even non-trivial mesoscopic spectral statistics, meaning on a scale between the mean level spacing of consecutive eigenvalues and the scale of the macroscopic level density, like the Wigner semicircle~\cite{semicircle} or the Mar\v{c}enko-Pastur distribution~\cite{MPdist}, have been observed in~\cite{KM2021}. Born out from these findings, new question have arisen. What is the true source for the transition between Poisson statistics in the tail and sine-kernel in the bulk of the spectrum? What is the source of the mesoscopic spectral statistics? Are those statistics universal, meaning which class of random matrices exhibit those? And most importantly, when we add infinitely many independent copies of the same random matrix do we obtain central limit theorems? What is the classification of the corresponding stable statistics and what is their domains of attraction?

Certainly, when considering the limit of large matrix dimensions complicates the situation as double scaling limits may exist. It is especially problematic that even no classification of generalised central limit theorems at finite matrix dimension exists, namely central limit theorems of the following form: let $X_i\in\Herm(N)$ be independent copies of the Hermitian  random matrix $X\in\Herm(N)$ that is unitarily invariant, i.e., $UXU^\dagger$ and $X$ being equal in distribution. Here, we employ the notation of $\Herm(N)$ which is the set of $N\times N$ Hermitian matrices, $U^\dagger$ is the Hermitian adjoint of the unitary matrix $U\in\U(N)$ and $\U(N)$ is the set of $N\times N$ unitary matrices. Then, there exist two sequences $\{A_m\}_{m\in\mathbb{N}}\subset\Herm(N)$ and $\{B_m\}_{m\in\mathbb{N}}\in\R_+=(0,\infty)$ such that
\begin{equation}\label{def.CLT}
	\lim_{m\to\infty}\left[\frac{X_1+\ldots+X_m}{B_m}-A_m\right]=Y
\end{equation}
with $Y$ a random matrix that is stable. Random matrices satisfying this limit are said to be in the domain of attraction of $Y$. We call matrix $Y$ to be stable if the distribution of the sum $a Y_1+b Y_2$ of two statistically independent copies $Y_1$ and $Y_2$ of  $Y$ and any two constants $a,b>0$ agrees with $Y$ again after properly rescaling and shifting, see~\cite{Shimura}. In this way, we guarantee that all kinds of statistics, say of eigenvalues, eigenvectors, matrix entries, sub-blocks or traces and determinants, will not change when adding independent copies of it.

We aim at filling this gap of understanding such central limit theorems in the present work. Particularly, we consider  unitarily invariant Hermitian random matrices with heavy tails. Well-known heavy tailed unitarily invariant random matrices are the Cauchy ensemble~\cite{WF2000,Tierz2001}, the Cauchy-Lorentz ensemble~\cite{WWKK2016} (also known as the matrix Student t-function in statistics~\cite{JA96,BBP2007b,Sutradhar86}), and the Hermitised products of Ginibre with inverse Ginibre matrices~\cite{ARRS2016,Forrester2014,LWZ2016}. The latter are only one kind of several multiplicative P\'olya ensembles~\cite{KK2016,KK2019,FKK2021} that can exhibit heavy tails. 

When it comes to a sum of independent and identically distributed random variables, the most prominent central limit theorem states that the sum converges to a normal distribution after a proper shift and scaling. A suitable condition is when those random variables have finite second moments. A generalisation for this classical central limit theorem is the limit to stable distributions like the L\'evy $\alpha$-stable distributions in the univariate case~\cite{GKC}. In the multivariate case such limits were investigated and classified in~\cite{ST94,Rvaceva,Shimura}. Their main tool has been harmonic analysis, especially the Fourier transform on $\R^d$. In probability theory this transform is known as the characteristic function.

We will pursue these ideas and extend them using the knowledge of harmonic analysis and the Fourier transform on symmetric matrix spaces which is also know as the spherical transform~\cite{Helgason2000}. The reason why we go over to these tools, despite the fact that $\Herm(N)$ is isomorphic to $\R^{N^2}$, is that the eigenvector statistics are, evidently, always given by the Haar measure on the coset $\U(N)/\U^N(1)$ (the division of $\U(N)$ by $\U^N(1)=\U(1)\times\ldots\times\U(1)$ corresponds to the diagonal matrix of phases that commute with diagonal matrices). Hence, there is not much information comprised and we can disregard them. The information about heavy tails and generalised central limit theorems are completely encoded in the eigenvalue statistics. Thus, it is only natural to reduce the focus to these quantities which, however, complicates the computations as the eigenvalues of matrices do not form a vector space as the additive action is done for the whole matrix and not for their eigenvalues, only.

To circumvent this disadvantage, we exploit a one-to-one relation between the joint probability distribution of the eigenvalues and the one of the diagonal entries of a unitarily invariant Hermitian random matrix. This relation is known as a derivative principle~\cite{CDKW14,Faraut,MZB16,ZK2020}. The idea is that the diagonal entries again build a vector space with respect to matrix addition so that the well-known additive convolution formula for vector spaces and the corresponding Fourier transform can be applied. Therefore, the derivative principle transfers a sum of random matrices to a sum of random vectors from which one can deduce the eigenvalue statistics.

\subsection{Main Results and Structure of the Article}

In preparation for proving the following theorems,  we introduce the  tools and our notations in Sec.~\ref{s2}. In particular, we define explicitly what unitarily invariant random matrices are and briefly recall their consequences regarding the induced marginal distributions for the eigenvalues and the diagonal entries in subsection~\ref{s2.1}. In subsection~\ref{s2.2}, we quickly recall how the Fourier transform on $\Herm(N)$ is related to the spherical transform for the marginal distributions of the measures for the eigenvalues. We state these definitions and relations in a form that are also applicable to tempered distributions on the respective spaces since we need those for the proofs of the main theorems. Thus, we do not assume that a probability density always exists. Certainly, Dirac delta distribution are allowed in most of our theorems and propositions, too. This is the reason why we need to generalise the derivative principle to general  Borel probability measures which remain unchanged under any unitary conjugation, i.e., $F(\dv X)=F(U\dv XU^\dagger)$ for all $U\in\U(N)$ and $X\in\Herm(N)$. We define

\newpage

\begin{theorem}[Derivative Principle]\label{p2.2}\

	Let $S_N$ be the symmetric group and $X\in\Herm(N)$ be $\U(N)$-invariant random matrix with a Borel probability distribution $F$. Its corresponding marginal measures for the unordered eigenvalues is $f_\eig$ and for the unordered diagonal entries it is $f_\diag$. Then, there exists a unique Borel measure $w$ on $\R^N$ such that
	\begin{equation}\label{2.3.5}
	\int_{\R^N}\phi(x)f_\eig(\dv x)=\int_{\R^N}\frac{1}{\prod_{j=0}^{N}j!}\Delta(\partial_x)\Big(\Delta(x)\phi(x)\Big)\,w(\dv x)
	\end{equation}
	for any Schwartz function $\phi\in \mathscr S(\R^N)$. The polynomial $\Delta(x)=\prod_{1\leq a<b\leq N}(x_b-x_a)$ is the Vandermonde determinant. Moreover, it holds
	\begin{equation}
		\s f_\eig(s)=\f w(s),
	\end{equation}
	where the spherical transform $\s$ and the Fourier transform $\f$ are defined in subsection~\ref{s2.2}. The measure  $w$ is equal to $f_\diag$ in distribution, that is
	\begin{equation}\label{2.3.6}
	\int_{\R^N} \psi(x)w(\dv x)=\int_{\R^N} \psi(x)f_\diag(\dv x)
	\end{equation}
	for all Schwartz functions $\psi\in \mathscr S_{S_N}(\R^N)$  that are permutation invariant in its $N$ arguments (which is highlighted by the subscript $S_N$).
\end{theorem}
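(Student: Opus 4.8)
The plan is to reduce the statement to the classical derivative principle for densities (as in \cite{CDKW14,Faraut}), and then bootstrap to general Borel probability measures by a regularisation argument. First I would note that the marginal measures $f_\eig$ and $f_\diag$ are well-defined Borel probability measures on $\R^N$: writing $X=UxU^\dagger$ with $x=\diag(x_1,\ldots,x_N)$ the (unordered) eigenvalues and $U$ Haar-distributed on $\U(N)/\U^N(1)$ by $\U(N)$-invariance of $F$, the law of the diagonal of $X$ is obtained by integrating the law of $\eig(X)$ against the fixed kernel given by the squared moduli of the entries of a Haar-random unitary — this is the finite-$N$ Schur--Horn / Gelfand--Naimark averaging, and it is exactly the content one wants to package into the operator $\frac{1}{\prod_{j=0}^N j!}\Delta(\partial_x)\circ\Delta(x)$ on the Fourier side. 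So the existence and uniqueness of $w$ is not really in question once one has the Fourier identity $\s f_\eig(s)=\f w(s)$; the real work is establishing that identity and identifying its left-hand side with the differential-operator expression.

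Second, I would prove the statement first under the simplifying assumption that $F$ has a $\U(N)$-invariant Schwartz density on $\Herm(N)$. In that case $f_\eig$ and $f_\diag$ have permutation-invariant Schwartz densities on $\R^N$, and \eqref{2.3.5} together with \eqref{2.3.6} is precisely the known derivative principle; the relation $\s f_\eig(s)=\f w(s)$ follows from the Harish-Chandra/Itzykson--Zuber formula, which expresses the spherical function on $\Herm(N)$ as $\Delta(\partial)$ acting on a product of exponentials divided by Vandermonde factors — this is where the constant $\prod_{j=0}^N j!$ and the $\Delta(\partial_x)(\Delta(x)\,\cdot\,)$ shape come from. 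Concretely, the spherical transform of $f_\eig$ at $s$ equals the Fourier transform of $f$ evaluated at $\diag(s)$, and the HCIZ expansion rewrites this as the Fourier transform (in the $\R^N$ sense) of the measure $w$ obtained by applying $\Delta(\partial_x)\Delta(x)(\cdot)/\prod j!$ to $f_\eig$; comparing with the definition of $f_\diag$ gives \eqref{2.3.6}.

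Third — and this is the step that needs the ``tempered distribution'' language advertised in subsection~\ref{s2.2} — I would remove the smoothness hypothesis by convolving $F$ on $\Herm(N)$ with a $\U(N)$-invariant Gaussian (or heat kernel) of variance $\varepsilon$. This keeps $\U(N)$-invariance, produces a Schwartz density for every $\varepsilon>0$, and all four objects $f_\eig^\varepsilon, f_\diag^\varepsilon, w^\varepsilon, \s f_\eig^\varepsilon=\f w^\varepsilon$ converge (weakly, and on the Fourier/spherical side pointwise) to their $\varepsilon\to 0$ counterparts. Since \eqref{2.3.5} and \eqref{2.3.6} are identities between continuous linear functionals on $\mathscr S(\R^N)$ — the right-hand side of \eqref{2.3.5} is a fixed differential operator of order $\binom{N}{2}$ applied inside the integral, hence still a tempered-distribution pairing — they pass to the limit, yielding the result for arbitrary Borel $F$, with $w$ characterised uniquely by \eqref{2.3.6} on permutation-invariant test functions (and extended to all of $\R^N$ by symmetrisation).

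The main obstacle I anticipate is the third step's justification that $\Delta(\partial_x)\big(\Delta(x)\phi(x)\big)$ pairs correctly against the limiting (possibly singular, e.g.\ Dirac) measure $f_\diag$: one must check that the differential operator does not ``see'' more regularity than $f_\eig$ actually has, i.e.\ that the identity is genuinely an equality of tempered distributions and that the weak limit commutes with the operator. This is handled by moving all derivatives onto the Schwartz function $\phi$ (integration by parts is legitimate since $\Delta(x)\phi(x)\in\mathscr S(\R^N)$), so that \eqref{2.3.5} reads as the pairing of the \emph{fixed} tempered distribution $f_\eig$ with the Schwartz function $\frac{1}{\prod j!}\Delta(\partial_x)(\Delta(x)\phi(x))$ against the pairing of $w$ with $\phi$; both sides are then manifestly continuous in the variable being regularised, and the HCIZ identity on the Fourier side does the rest.
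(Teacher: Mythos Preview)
Your proposal is correct in outline, but it takes a longer route than the paper and, ironically, your final paragraph already contains the observation that would let you skip the detour.

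The paper does \emph{not} regularise. It proves \eqref{2.3.5} directly for an arbitrary $\U(N)$-invariant Borel probability measure $F$ by first establishing the purely algebraic identity
\[
\frac{1}{\prod_{j=0}^{N}j!}\Delta(\partial_x)\bigl(\Delta(x)\phi(x)\bigr)=\f\s^{-1}\phi
\]
for any symmetric Schwartz $\phi$ (this is exactly the HCIZ computation you allude to). With this in hand, the right side of \eqref{2.3.5} with $w=f_\diag$ is just $\int \f\s^{-1}\phi\, f_\diag(\dv x)$, which by the definition of $f_\diag$ and the invariance of $F$ unwinds to $\int i_1\phi(X)\,F(\dv X)=\int\phi\,f_\eig(\dv x)$. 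No density assumption is ever made, and uniqueness follows from bijectivity of $\f\s^{-1}$ on symmetric Schwartz functions.

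Your approach---prove it for Schwartz densities via the classical result, then convolve $F$ with a $\U(N)$-invariant heat kernel and pass to the limit---also works, and the limit argument is straightforward precisely because (as you note at the end) the derivatives in \eqref{2.3.5} already sit on the test function, so both sides are pairings of a fixed Schwartz function against a weakly converging sequence of probability measures. But this very observation is what makes the regularisation superfluous: once you know that $\frac{1}{\prod j!}\Delta(\partial_x)(\Delta(x)\phi)$ is itself a symmetric Schwartz function (equal to $\f\s^{-1}\phi$), the identity \eqref{2.3.5} is a statement about integrating bounded smooth functions against measures, and you can verify it directly without ever invoking the density case. Your instinct to ``move derivatives onto $\phi$'' is the whole proof; the bootstrap from Proposition~\ref{p2.3.1} is scaffolding you can discard. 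A minor slip: in your last paragraph you speak of the operator ``seeing more regularity than $f_\eig$ has'', but the pairing on the right of \eqref{2.3.5} is against $w=f_\diag$, not $f_\eig$, and the derivatives are already on the test side in the statement as written.
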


Despite the fact that this theorem is still a preparation for our original goal, which is the classification of stable unitarily invariant random matrix ensembles and the identification of their domains of attraction, we consider it, nonetheless, as our first main result. It is proven in subsection~\ref{sec:der}. So far this theorem was only shown~\cite{CDKW14,Faraut,MZB16,ZK2020} when a  density for the measure exists; it is restated in Proposition~\ref{p2.3.1}.

In Sec.~\ref{s3}, we go over to our main focus namely stable invariant Hermitian random matrices. In subsection~\ref{s3.1}, we introduce the notion of a stable invariant ensembles and recollect what is known for stable random vectors in multivariate statistics. Unitarily invariant stable ensembles are characterised by four ingredients: a shift $y_0I_N$ that can only come as a real multiple ($y_0\in\mathbb{R}$) of the identity matrix $I_N$ due to the invariance, a scaling $\gamma>0$ that describes how broad the function is, the stability exponent $\alpha\in(0,2)$ that states how heavy-tailed the distribution is, and  a normalised spectral measure $H$, see~\cite{Rvaceva,ST94,Shimura}, which appears in the logarithm of their characteristic functions and describes the anisotropy of the distribution. We denote this quadruple of a stable ensemble by $S(\alpha,\gamma H,y_0 I_N)$. This characterisation has a natural counterpart for the marginal measures corresponding to the diagonal entries and the eigenvalues of the random matrix. This is summarised in the following theorem, in which we employ the standard notation $\E$ and $\mathbb{P}$. In particular, $\E[O]$ is the expectation value of an observable $O$ and $\mathbb{P}(\Sigma)$ is the probability of a measurable set $\Sigma$, i.e., it is the expectation value $\mathbb{P}(\Sigma)=\mathbb{E}[\chi_\Sigma]$ of the indicator function $\chi$ of the set $\Sigma$.

The following function is used in the expression of the characteristic function of stable distributions.
\begin{equation}\label{stable.vector}
	\nu_\alpha(u)=\begin{cases}
		\displaystyle|u|^\alpha\left(1-i\sgn(u)\,\tan\frac{\pi\alpha}{2}\right)=\frac{(-iu)^\alpha}{\cos(\pi\alpha/2)},&\quad 0<\alpha< 1\ {\rm or}\ 1<\alpha\le 2,\\
		\displaystyle|u|\left(1+\frac{2i}{\pi}\sgn(u)\,\log|u|\right)=\frac{2i u}{\pi}\log(-i u),&\quad \alpha=1,
	\end{cases}
\end{equation}
with $u\in\R$. The complex root $(.)^\alpha$ and the complex logarithm are their principle values with a branch cut along the negative real axis.

\begin{theorem}[Stable Distributions of Invariant Random Matrices]\label{p3.1}\

	Let $Y\in\Herm(N)$ be a random matrix and $y_\diag$ be its diagonal entries. Let $0<\alpha\le2$, $\gamma> 0$, $y_0\in\R$ be fixed constants and $H$ be a normalised spectral measure on $\mathbb{S}_{\Herm(N)}$, which has joint probability measures  $h_\diag$ of the diagonal entries and $h_\eig$ of the eigenvalues of the associated random matrix $R\in\mathbb{S}_{\Herm(N)}$. The following statements are equivalent:
	\begin{enumerate}
		\item $Y$ follows a ${\rm U}(N)$ invariant stable distribution $S(\alpha,\gamma H,Y_0)$ for some fixed matrix $Y_0$;
		\item $Y$ follows a stable distribution $S(\alpha,\gamma H,y_0 I_N)$ with $Y_0=y_0 I_N$ being a multiple of the identity matrix and $H$ being ${\rm U}(N)$ invariant, i.e., for any $U\in\U(N)$ it is
		\begin{equation}\label{inv.prop}
		H(\dv R)=H(U\dv R U^\dagger);
		\end{equation}
		\item $Y$ is ${\rm U}(N)$ invariant; $y_\diag$ has the following characteristic function
		\begin{equation}\label{3.1.8a}
			\mathbb E[ \exp (i y_\diag^\top s)]=\exp\left(-\gamma \int_{\|t\|\le 1} h_\diag(\dv t)\nu_\alpha(s^\top t)+iy_0 1_N^\top s\right)
		\end{equation}
		with $y_0$ being the same as in statement (2) and $1_N=(1,\ldots,1)^T\in\R^N$;
		\item $Y$ is ${\rm U}(N)$ invariant; and it has the following characteristic function
		\begin{equation}\label{3.1.10}
			\mathbb E [\exp (i \tr  YU\diag(s)U^\dagger)]=\exp\left(-\gamma c_N\int_{r\in\mathbb S^{N-1}}h_{\mathrm{eig}}(\dv r)\sum_{\rho\in S_N}\frac{\nu_\alpha(s^\top r_\rho)(s^\top r_\rho)^{N(N-1)/2}}{\Delta(s)\Delta(r_\rho)}+i y_0 1_N^\top s \right)
		\end{equation}
		for all $U\in \U(N)$, where $y_0 1_N$ is the same as in statement (2), $r_\rho$ is the permuted vector of $r$ with respect to the permutation $\rho\in S_N$, $\diag(s)=\diag(s_1,\ldots,s_N)$ is the standard embedding of the vector $s=(s_1,\ldots,s_N)^\top$ as a diagonal matrix and the constant $c_N$ is given by
		\begin{equation}
			c_N=\frac{\Gamma(\alpha+1)\prod_{j=0}^Nj!}{\Gamma(\alpha+1+N(N-1)/2)}.
		\end{equation}
	\end{enumerate}
\end{theorem}

The definitions and notations that are needed to understand this theorem fully can be found in subsection~\ref{s3.1} while the theorem is proven in subsection~\ref{s3.2}. We also discuss the implication for strictly stable random matrix ensembles in subsection~\ref{s3.1}, where the sum $Y_1+Y_2$ of two independent copies of the random matrix $Y$ only needs to be rescaled but not to be shifted to get again a copy of $Y$.

To illustrate and to get a feeling what Theorem~\ref{p3.1} means and implies we present three examples for the spectral measure $H$ in Sec.~\ref{s5}. The  elliptical invariant matrix ensembles are the first class of examples, see subsection~\ref{sec.ell}. Those can be traced back to the Gaussian elliptical ensemble in the same manner as it was done in~\cite{BCP2008,AFV2010,AV2008,AAV2009} where one averages over the variance of the corresponding Gaussian ensemble. The second example have a Dirac delta function as the spectral measure, discussed in subsection~\ref{sec:Dirac}. They represent the simplest embedding of univariate probability theory, namely a real random variable multiplied by the identity matrix. In subsection~\ref{sec:orbit}, we discuss the third class of ensembles which we have dubbed orbital measures as the corresponding spectral measure is supported only on the orbit created by the conjugate action of the unitary group on a fixed Hermitian matrix.

The generalised central limit theorems and especially the domains of attraction of the stable invariant random matrix ensembles are analysed in Sec.~\ref{s4}. We first review what is known from multivariate probability theory in subsection~\ref{sec:stable}. Especially, we extend its classification by an additional equivalent statement in Theorem~\ref{multi_stable_law} which has been useful in proving our third main theorem that relates the domain of attraction for random matrices, their eigenvalues and their diagonal entries.

\begin{theorem}[Domains of Attraction]\label{DoA}\

	Let $0<\alpha<2$ and $H$ be a normalised ${\rm U}(N)$ invariant spectral measure, see Eq.~\eqref{inv.prop}, with induced measures $h_\diag$ and $h_\eig$ for the diagonal entries and the eigenvalues, respectively. Let $X\in\Herm(N)$ be an invariant  random matrix, $x_{\eig}$ be its eigenvalues and $x_\diag$ be its diagonal entries. The following statements are equivalent:
	\begin{enumerate}
		\item $X$ belongs to the domain of attraction of $S(\alpha,H,0)$, meaning for $X_1,\ldots,X_m$ being $m$ independent copies of $X$ there exist two sequences of positive real numbers $\{B_m\}_{m\in\mathbb{N}}\subset\R_+$ and real numbers $\{A_m\}_{m\in\mathbb{N}}\subset\R$ such that the limit,
		\begin{equation}\label{conv_in_distr}
		\lim_{m\to\infty}\left(\frac{X_1+\ldots+X_m}{B_m}-A_mI_N\right)=Y
		\end{equation}
		is in distribution and the invariant random matrix $Y$ is drawn from $S(\alpha,H,0)$;
		\item $x_\diag$ belongs to the domain of attraction of $S(\alpha,g_\alpha,y_11_N)$ with $g_\alpha$ depending on $h_\diag$ as defined in~\eqref{g_alpha} and the same sequences $\{B_m\}_{m\in\mathbb{N}}\subset\R_+$ and real numbers $\{A_m\}_{m\in\mathbb{N}}\subset\R$ such that
		\begin{equation}\label{conv_in_distr.b}
		\lim_{m\to\infty}\left(\frac{x_{\diag,1}+\ldots+x_{\diag,m}}{B_m}-A_m1_N\right)=y_\diag
		\end{equation}
		in distribution, with $y_{\diag}$ drawn from $S(\alpha,g_\alpha,y_1 1_N)$, $1_N=(1,\ldots,1)$ and $y_1=0$ for $\alpha\neq1$ and $y_1$ given by~\eqref{3.1.12b} for $\alpha=1$;
		\item $x_{\rm eig}$ belongs to the domain of attraction of $S(\alpha,h_\eig,0)$ with  the same sequences $\{B_m\}_{m\in\mathbb{N}}\subset\R_+$ and the real numbers $\{A_m\}_{m\in\mathbb{N}}\subset\R$ and an additional sequence $\{C_m\}_{m\in\mathbb{N}}\subset\R_+$ such that
		\begin{equation}\label{conv_in_distr.c}
		\lim_{m\to\infty}C_m\left(\frac{x_{\eig,1}+\ldots+x_{\eig,m}}{B_m}-A_m1_N\right)=y
		\end{equation}
		in distribution, with $y$ drawn from $S(\alpha,h_\eig,0)$. If additionally $\tr  Y=\tr \diag(y_{\diag})=\tr \diag(y)\neq0$ with $\diag$ being the canonical embedding of a vector as a diagonal matrix, then it holds $C_m=1$ for all $m\in\mathbb{N}$.
	\end{enumerate}
\end{theorem}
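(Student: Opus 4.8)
The plan is to transport the problem to the vector-valued (diagonal) picture, where the classical theory of multivariate domains of attraction applies, and to use the derivative principle of Theorem~\ref{p2.2} as the dictionary between the diagonal and the eigenvalue pictures. Two facts are used throughout: a limit in distribution of $\U(N)$-invariant matrices is again $\U(N)$-invariant (so the invariance claimed for the limits in (1)--(3) is automatic), and, by Theorem~\ref{p2.2}, the spherical transform $\s f_\eig$ and the Fourier transform $\f f_\diag$ of an invariant matrix coincide and both equal its matrix characteristic function.

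For (1)$\Leftrightarrow$(2) I would exploit that, by invariance, $\E[\exp(i\tr XU\diag(s)U^\dagger)]$ does not depend on $U$ and, at $U=I_N$, equals the characteristic function $\E[\exp(ix_\diag^\top s)]$ of the diagonal-entry vector; the same identity holds for each summand, and since the diagonal of a matrix sum is the sum of the diagonals and $\tr(A_mI_N\,U\diag(s)U^\dagger)=A_m1_N^\top s$, it holds for the rescaled and shifted matrix sum of~\eqref{conv_in_distr} and the corresponding diagonal sum of~\eqref{conv_in_distr.b}. The continuity theorem for characteristic functions on $\Herm(N)\cong\R^{N^2}$, applied within the invariant class, then makes the two convergences equivalent with literally the same $\{B_m\}$ and $\{A_m\}$, and their limits have identical transforms, so $y_\diag$ is the diagonal-entry vector of $Y$. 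The equivalence of statements (2) and (3) of Theorem~\ref{p3.1} finally identifies the limit law as $S(\alpha,g_\alpha,y_1 1_N)$, with $g_\alpha$ built from $h_\diag$ as in~\eqref{g_alpha} and $y_1=0$ unless $\alpha=1$, in which case $y_1$ is given by~\eqref{3.1.12b}.

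For (2)$\Leftrightarrow$(3) I would reduce both statements to tail conditions through the classification of multivariate domains of attraction (Theorem~\ref{multi_stable_law}, resting on~\cite{Rvaceva,ST94,Shimura}): $x_\diag$ lies in a domain of attraction of an $\alpha$-stable vector precisely when its law is multivariate regularly varying of index $\alpha$, the normalised angular parts converging to a fixed measure that -- together with the slowly varying radial factor -- pins down $g_\alpha$ and $B_m$; likewise for $x_\eig$ with angular measure $h_\eig$. The derivative principle~\eqref{2.3.5} exhibits $f_\eig$ as the image of $w=f_\diag$ under the fixed local operator $\phi\mapsto\frac{1}{\prod_{j=0}^{N}j!}\Delta(\partial)(\Delta\phi)$; dualising, $f_\eig$ is obtained from $f_\diag$ by applying $\Delta(\partial)$, which differentiates $N(N-1)/2$ times and lowers the radial tail exponent by $N(N-1)/2$, and then multiplying by the homogeneous degree-$N(N-1)/2$ factor $\Delta$, which restores it. Hence $f_\eig$ is regularly varying of the same index $\alpha$ with the same slowly varying radial factor, and its angular part is the transform of that of $f_\diag$ dictated by comparing statements (3) and (4) of Theorem~\ref{p3.1}, i.e.\ exactly $h_\eig$. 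This yields (2)$\Leftrightarrow$(3) with the same $\{B_m\}$, $\{A_m\}$ and with $y$ the unordered eigenvalue vector of $Y$; the unordered convention is what legitimises the common centering $A_m1_N$, since then every component of $x_\eig$ has mean $\frac{1}{N}\E[\tr X]$. The auxiliary sequence $\{C_m\}$, bounded away from $0$ and $\infty$, only reconciles the norming constants of the two limit laws, which differ by the dimension-dependent factor encoded in the constant $c_N$ of Theorem~\ref{p3.1}; when $\tr Y=\tr\diag(y_\diag)=\tr\diag(y)\neq0$ the trace coordinate $\sum_j(x_\eig)_j=\sum_j(x_\diag)_j=\tr X$ is a genuine scalar i.i.d.\ sum common to all three formulations, and matching its one-dimensional limit forces $C_m=1$.

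The hard part is the tail transfer in (2)$\Leftrightarrow$(3): one must establish rigorously, for Borel measures that need not admit densities, how $\Delta(\partial)(\Delta\,\cdot\,)$ acts on a multivariate regularly varying tail -- bounding the lower-order contributions from the angular derivatives, checking that no cancellation destroys the leading term (here the positivity of $f_\eig$ and the behaviour of $\Delta$ on the Weyl chamber enter), and pinning down the exact constant that becomes $C_m$. An equivalent route works on the transform side, using $\s f_\eig=\f f_\diag$ together with a Tauberian comparison of the spherical function and the plane wave $e^{ix^\top s}$ as $s\to0$; either way the careful accounting of constants -- and the separate bookkeeping for $\alpha=1$, with its logarithmic norming correction and anomalous shift $y_1$ -- is the core of the argument, while the remaining implications (invariance of all limits, and the appeals to Theorems~\ref{p2.2} and~\ref{p3.1}) are routine.
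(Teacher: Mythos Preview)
Your treatment of (1)$\Leftrightarrow$(2) matches the paper's: invariance makes the matrix characteristic function equal the Fourier transform of $x_\diag$, and Theorem~\ref{p3.1a} identifies the diagonal limit law. Fine.

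The substantive divergence is in how you reach the eigenvalue statement. You attempt (2)$\Leftrightarrow$(3) by pushing multivariate regular variation through the derivative principle, arguing heuristically that $\Delta(-\partial_x)$ drops the radial tail exponent by $N(N-1)/2$ while the factor $\Delta(x)$ restores it. You acknowledge this is the hard part and do not carry it out. This is a genuine gap: regular variation is a condition on the measure of sets $\{\|x\|>R\}$, not on pointwise density asymptotics, so there is no direct sense in which a differential operator ``lowers the tail exponent''; moreover the derivative principle of Theorem~\ref{p2.2} holds only weakly (against Schwartz test functions), so one cannot simply read off tail behaviour of $f_\eig$ from that of $f_\diag$. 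Your alternative Tauberian route via $\s f_\eig=\f f_\diag$ is also only sketched.

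The paper sidesteps this entirely by proving (1)$\Leftrightarrow$(3) directly, without the derivative principle. The key observation is that $\sqrt{\tr X^2}=\|x_\eig\|$, so the radial variable in the matrix regular-variation condition~\eqref{4.1.3} is literally the radial variable for the eigenvalue vector. Combined with Lemma~\ref{lem:invariance} (which allows one to test only against $\U(N)$-invariant $\Phi$, respectively $S_N$-invariant $\phi$) and the bijection $i_1$ between these two classes of test functions, diagonalising $X=U\diag(x_\eig)U^\dagger$ and integrating out the Haar-distributed $U$ turns the matrix condition~\eqref{4.2.6} into the vector condition~\eqref{4.1.13} and back. No analysis of how differential operators interact with tails is needed.

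A smaller point: your reading of $C_m$ as encoding the constant $c_N$ from Theorem~\ref{p3.1} is not what the paper intends. The paper cannot a priori rule out that the eigenvalue-vector sum requires a different scaling from the matrix sum; the trace argument (common to all three formulations) pins down $A_m$ and $B_m$ and forces $C_m=1$ only when $\tr Y\neq0$, because otherwise the trace coordinate is degenerate and carries no scaling information.
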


The multi-dimensional analogue was firstly proved in~\cite{Rvaceva} and then restated in~\cite{Shimura} (see also the following Theorem~\ref{multi_stable_law} (1)$\Rightarrow$(2)). Our contribution is to give another equivalent statement (3) for classifying the domain of attraction, as we will make direct use of the latter to prove the analogue for Hermitian invariant ensembles.

Theorem~\ref{DoA} is proven in subsection~\ref{sec:stable} and only applies for a stability exponent $\alpha\in(0,2)$. We would like to point out that the additional statement in part (3) is born out of the fact that the trace of the right hand sides of~\eqref{conv_in_distr},~\eqref{conv_in_distr.b} and~\eqref{conv_in_distr.c} yields exactly the same real random variable. Thus, one only has to worry about the traceless part which cannot have a shift due to the invariance. Thus, the factor $C_m$ reflects the possibility that the traceless part can have a different scaling in $m$ than the trace which, however, can be fixed when the condition $\tr  Y=\tr \diag(y_{\diag})=\tr \diag(y)\neq0$ is satisfied.

 Additionally, we discuss the strict domains of convergences, meaning when $A_m=0$ in~\eqref{def.CLT}, in the same subsection. The Gaussian counterpart ($\alpha=2$) of this theorem is Theorem~\ref{DoA2} which is stated and proven in subsection~\ref{sec:Gauss}. It differs from the $\alpha<2$ case that there are not so many different stable random matrix ensembles that are invariant. Actually, it is only a three parameter family, namely the shift which must be proportional to the identity matrix, the variance of the trace of the matrix, and the variance of the traceless part of the random matrix.

In subsection~\ref{sec:heavy}, we relate the central limit theorem to the heaviness of the tail (the exponent with which the distribution algebraically decays) of those ensembles that lie in the domain of attraction of a stable invariant random matrix ensemble. In this subsection, we also explicitly define what a heavy-tailed random matrix is, see Definition~\ref{p3.2.2}, and prove equivalent definitions of the corresponding ``heaviness'' index.

 We conclude in Sec.~\ref{s6} and discuss the open problems that await when studying the asymptotic of large matrix dimensions.

\section{Characteristic Functions for Matrices}\label{s2}

\subsection{Invariant Ensembles and their Induced Measures}\label{s2.1}

Denote $\Herm(N)$ the set of all $N\times N$ Hermitian matrices for which we prove the ensuing theorems and propositions. We are sure that some of the statements can be readily extended to other matrix spaces without modifying much.  An invariant ensemble on $\Herm(N)$ is a random matrix $X\in\Herm(N)$, with a Borel probability measure $F$ on $\Herm(N)$ that is invariant under the action of unitary conjugations, i.e. $F(\dv X)=F(U\dv XU^\dagger)$ for all $U\in\U(N)$ and $X\in\Herm(N)$. This invariance has certain implications. For instance the distribution of the eigenvectors is always given by the normalised Haar measure on the co-set $\U(N)/\U^N(1)$, denoted by $\dv U$, regardless of the probability measure $F$. Hence, elements on the same orbit, say $\left\{UX_0U^\dagger: U\in\U(N) \right\}$ with $X_0\in\Herm(N)$, are uniformly distributed. The whole statistics is governed by a set of $N$ degrees of freedom. Those can be the eigenvalues or the diagonal entries of the random matrix.

To account for the derivatives in the relation between the eigenvalues and the diagonal entries of  the random matrix given by the derivative principle~\ref{p2.2}, we need to understand the Borel probability measure as a tempered distribution on $\Herm(N)$. Indeed, it is guaranteed that such a measure is a tempered distribution, see~\cite[Example 7.12(2)]{Rudin}. The corresponding test-functions are the Schwartz functions on $\Herm(N)$. We recall that Schwartz functions are smooth functions decaying faster than any polynomial. We denote the set of all Schwartz functions on an open subset $\Omega$ of $\R^d$ (for some $d$) by $\mathscr S(\Omega)$. In the present work it is always either $\Omega=\Herm(N)$ or $\Omega=\R^N$. The invariance of $F$ takes the following weak relation
\begin{equation}
	\int_{\Herm(N)} \Phi(X) F(\dv X)=\int_{\Herm(N)} \Phi(UXU^{\dagger}) F(\dv X),
\end{equation}
for any Schwartz function $\Phi\in \mathscr S(\Herm(N))$ and any unitary matrix $U\in\U(N)$. $U^\dagger$ is the Hermitian conjugate of $U$.

For our purposes we need two measures of $F$ which are to the marginal measures of the eigenvalues, $f_\eig$, and of the diagonal entries, $f_\diag$. Let $x=(x_1,\ldots,x_2)$ be the eigenvalues of $X\in\Herm(N)$, the Weyl decomposition formula~\cite[\S 1.3.4]{Fo10} tells us that $F$ only depends on $f_\eig$, which leads to the following defining relation
\begin{equation}\label{def_ePDF1}
	\int_{\Herm(N)}\Phi(X)F(\dv X)=:\int_{\R^N}\left(\int_{\U(N)/\U(1)}\Phi(U\diag(x)U^\dagger){\rm d}U\right)f_\eig(\dv x)=\int_{\R^N}\phi(x)f_\eig(\dv x)
\end{equation}
for any Schwartz function $\Phi\in \mathscr S(\Herm(N))$ which implies a permutation invariant Schwartz function $\phi(x)=\int_{\U(N)/\U(1)}\Phi(U\diag(x)U^\dagger){\rm d}U$ on $\R^N$. Certainly, this is also true without Weyl's decomposition formula but we need this factorisation $F(\dv X)={\rm d}U\,f_\eig(\dv x)$ later on. The map $x\mapsto\diag(x)$ is the embedding of the $N$ dimensional vector $x=(x_1,\ldots,x_N)$ as the diagonal matrix $\diag(x_1,\ldots,x_N)\in\Herm(N)$. The relation between $\Phi$ and $\phi$ naturally leads to a map $i_1$ between the symmetric Schwartz functions on $\R^N$ and the Schwartz functions invariant under the conjugate action of $\U(N)$ on $\Herm(N)$. Specifically, for $X=U\diag(x_1,\ldots x_N)U^{\dagger}$ with $U\in\U(N)$, we define
\begin{equation}\label{i_1}
i_1 \phi(X):=i_1 \phi\left(U\diag(x)U^{\dagger}\right)=\phi(x_1,\ldots,x_N)\qquad {\rm and}\qquad i_1^{-1}\Phi(x)=\Phi(\diag(x))
\end{equation}
for any $\U(N)$-invariant Schwartz function $\Phi\in\mathscr S(\Herm(N))$ and any $\phi\in i_1^{-1}\mathscr S(\Herm(N))\subset\mathscr S_{S_N}(\R^N)$.
Then~\eqref{def_ePDF1} can be rewritten as
\begin{equation}\label{def_ePDF2}
\int_{\R^N}\phi(x)f_\eig(\dv x)=\int_{\Herm(N)}i_1\phi(X)F(\dv X),
\end{equation}
which can be seen as the true definition of $f_\eig$. The reason why we need to restrict $\phi$ to the preimage $i_1^{-1}\mathscr S(\Herm(N))$ is that $i_1 \phi$ might be not smooth at the boundary of a Weyl chamber where the eigenvalues $x$ degenerate when choosing  an arbitrary symmetric Schwartz function $\phi\in\mathscr S_{S_N}(\R^N)$. A similar mechanism can be observed for the polar decomposition on $\R^2$ for rotation invariant functions in a radius and an angle. When choosing a Schwartz function for the radius it does not imply that the function is differentiable at the origin when canonically embedding the function as a rotation invariant function on $\R^2$.

In the case of the distribution of the diagonal entries we need a different embedding $i_2$ of $\R^N$ into $\Herm(N)$ which is symmetric in its entries but it is not invariant under the conjugation of an arbitrary unitary matrix. To this aim, we choose another arbitrary Schwartz function $\psi\in \mathscr S_{S_N}(\R^N)$ symmetric in its entries and define  the map $i_2$ from functions on $\R^N$ to functions on $\Herm(N)$ as follows
\begin{equation}\label{def.i2}
	i_2 \psi(X):=\psi(x_{11},\ldots,x_{NN}),
\end{equation}
where $x_{11},\ldots,x_{NN}$ are the diagonal entries of the matrix $X\in\Herm(N)$. We note that  $i_2 \psi(X)$ is still smooth but it is not rapidly decaying any more in the off-diagonal matrix entries. The diagonal entry distribution $f_\diag$ can be defined weakly as follows
\begin{equation}\label{def_diagPDF}
	\int_{\R^N}\psi(x)f_\diag(\dv x):=\int_{\Herm(N)} i_2 \psi(X)F(\dv X)
\end{equation}
for an arbitrary Schwartz function $\psi\in\mathscr{S}_{S_N}(\R^N)$.
The integral on the right hand side converges because $i_2\psi$ is bounded as $\psi$ is a Schwartz function on $\mathbb{R}^N$.

We remark here that both $i_1$ and $i_2$ can be extended to map continuous or bounded measurable functions to continuous or bounded measurable functions. For $i_1$, the boudnedness extends naturally and the measurablity entends because the spectral decomposition of the measure $\dv X$ allows $\phi(X)\dv X$ to be decomposed into the eigenvalue part and the eigenvector part, where the former is $i_1\phi(x)\dv x$. The continuity extends . For $i_2$, the existence of its extension can be seen easily as $i_2$ is only a restriction on $N$ variables. These extensions will be useful in the proof of Theorem 3.

Despite that $i_2\psi$ is not an invariant function under the conjugate action of $\U(N)$, one can create a related function which is invariant, because the measure $F$ is invariant. First we introduce a suitable notation. Let $B(\Omega)$ be the set of bounded measurable functions on $\Omega\subset \R^d$. Then for any bounded measurable function $g\in B(\Herm(N))$, we use the notation
\begin{equation}\label{am_functions}
	\la g(UXU^\dagger) \ra:=\int_{U\in\U(N)}g(UXU^\dagger)\dv U.
\end{equation}
The right hand side converges by the boundedness of $g$, and in fact $\la g(UXU^\dagger) \ra$ (in terms of a function of $X$) is also a bounded measurable function on $\Herm(N)$. The invariance of $\la g(UXU^\dagger) \ra$ under the conjugation of $V\in\U(N)$ is induced by the normalised Haar measure $\dv U$. It is simple to check that $\la g(UXU^\dagger) \ra$ is a Schwartz function when $g$ is one, too. The average~\eqref{am_functions} will be very helpful in the ensuing chapters.

In a weak sense we can extend the average~\eqref{am_functions} to probability measures. For instance, let $G(\dv X)$ be a probability measure on $\Herm(N)$ which is not necessarily invariant under the conjugate action of $\U(N)$. Then, for any bounded measurable function $g\in B(\Herm(N))$ we define
\begin{equation}\label{am_measures}
	\int_{\Herm(N)} g(X)\la G(U\dv X U^\dagger)\ra:=\int_{\Herm(N)} \la g(UXU^\dagger)\ra G(\dv X).
\end{equation}
The right hand side converges by the boundedness of $g(UXU^\dagger)$. Apparently, the construction guarantees the invariance of $\la G(U\dv X U^\dagger)\ra$ as it is traced back to the one of $\la g(UXU^\dagger)\ra$. Hence, $\la G(U\dv X U^\dagger)\ra$ defines a new invariant probability measure on $\Herm(N)$ in terms of $\dv X$.

The construction above allows us to understand $f_\diag$ in another way. Since $F$ is invariant, one can apply a conjugation of $U\in\U(N)$ on the right side of~\eqref{def_diagPDF} which keeps the integral the same. Thence, we can compute
\begin{equation}\label{2.2.3}
	\int_{\Herm(N)} i_2\psi(X)F(\dv X)=\int_{\Herm(N)} i_2\psi(X)\la F(U\dv XU^\dagger )\ra=\int_{\Herm(N)} \la i_2\psi(UXU^\dagger)\ra F(\dv X).
\end{equation}
Therefore, the flaw of the lack of  invariance of $i_2\phi$ can be resolved in this way. This insight will serve us well in the proof of Theorem~\ref{p2.2}.

\subsection{Fourier and spherical transforms}\label{s2.2}

Next we will briefly review the harmonic analysis on invariant ensembles in terms of distributions as it will be the main tool in our proofs. A common way to extend harmonic analysis from functions to distributions is through their actions on test functions, as we will proceed in this way. To begin with, we state our convention for Fourier transforms of Schwartz functions $\phi\in \mathscr S(\R^d)$.

\begin{definition}[Fourier Transform on $L^1(\R^N)$]\

	A Fourier transform of a Lebesgue integrable function $\phi\in L^1(\R^d)$ is given by
	\begin{equation}
		\f \phi(s):=\int_{\R^d}\phi(x)\exp \left(i\sum_{j=1}^Nx_js_j\right)\dv x,\quad s\in\R^d.
	\end{equation}
\end{definition}

The Fourier transform on the set of symmetric Schwartz functions $\mathscr S_{S_N}(\R^N)$ as well as of general Schwartz functions $\mathscr S(\R^d)$ is given by restriction to the corresponding subspaces.

	The Fourier transformation is an isomorphism on $\mathscr S(\R^d)$ with an inversion formula given by
	\begin{equation}
		\phi(x)=\f^{-1}\Big[\f\phi(s)\Big](x):=\frac{1}{(2\pi)^d}\int_{\R^d}\f\phi(s)\exp \left(-i\sum_{j=1}^dx_js_j\right)\dv s,
	\end{equation}
	and a differentiation formula given by
	\begin{equation}\label{Fourier_derivative}
		\f\Big[ (i\partial_{x_j})^k \phi(x)\Big](s)=s_j^k\f\phi(s),\quad k=0,1,\ldots
	\end{equation}
There is no need to introduce a regularisation for the inversion as the Fourier transform of a Schwartz function is a Schwartz function and, thus, the bijectivity holds.

The spherical transform is the generalisation of the concept of a Fourier transform on curved manifolds. This transform has to preserve the respective group invariance which is in our case the conjugate action of $\U(N)$ on $\Herm(N)$. There is a general framework of spherical transform introduced in~\cite{Helgason2000} on homogeneous space with a compact Lie group action, whose most general form is beyond the scope of the present work. We will instead restrict ourselves to the case of invariant ensembles on $\Herm(N)$. Since $\Herm(N)$ is an $N^2$-dimensional vector space, one can also directly derive the spherical transform from the corresponding Fourier transform on $\Herm(N)$.

Let $\phi\in \mathscr S_{S_N}(\mathbb{R}^N)$ and, thus, $i_1\phi\in L^1(\Herm(N))$ is an invariant $L^1$-function, recall the definition~\eqref{i_1} of $i_1$. Understanding Hermitian matrices as $N^2$-dimensional real vectors, the matrix Fourier transform of $i_1\phi$ is given by
\begin{equation}\label{2.2.4}
	\f_{\Herm(N)} i_1\phi(S):=\int_{\Herm(N)}i_1\phi(X)\exp (i\tr XS)\dv X,\quad S\in\Herm(N).
\end{equation}
Here we specify the fact that this is a matrix Fourier transform by the subscript $\Herm(N)$ as there is the difference of some factors of $2$ once one writes the inner product $\tr XS$ in terms of the real components of $X$ and $S$. When applying the spectral decomposition for $X=U\diag(x)U^\dagger$ the corresponding integrand $i_1\phi(X)\dv X$ factorises into $\phi(x)\dv x$ and the normalised Haar measure ${\rm d}U$ on $\U(N)/\U^N(1)$. The integral over $U$ can be readily extended to an integral over the whole group $\U(N)$ as $\U^N(1)$, embedded as diagonal matrices of complex phases, commutes with $\diag(x)$. Since the only $U$ dependence can be found in the exponential function, we arrive at Harish-Chandra–Itzykson-Zuber (HCIZ) integral~\cite{Harish-Chandra,Itzykson-Zuber}
\begin{equation}\label{2.2.5}
	K(x,s):=\int_{\U(N)}\exp\left(i\tr  U\diag(x)U^\dagger \diag(s)\right)\dv U=\prod_{j=0}^{N-1}j!\ \frac{\det[e^{ix_js_k}]_{j,k=1}^N}{\Delta(x)\Delta(s)}.
\end{equation} 
the function $\Delta(x):=\prod_{1\le j<k\le N} (x_k-x_j)$ is the Vandermonde determinant. Later, we also need an analogous notation $\Delta(\partial_x):=\prod_{1\le j<k\le N} (\partial_{x_k}-\partial_{x_j})$ which is the Vandermonde determinant of the partial derivatives.

With the help of the integral~\eqref{2.2.5}, we define the following spherical transform.


\begin{definition}[Spherical Transform on $\mathscr S(\R^d)$]\

	Let $\phi/\Delta^2\in \mathscr S_{S_N}(\R^N)$ be a symmetric function. The spherical transform $\s: \Delta^2\mathscr S_{S_N}(\R^N)\mapsto \mathscr S_{S_N}(\R^N)$ of $\phi$ and its inverse are given by
	\begin{equation}\label{2.2.6}
		\s \phi(s):=\frac{\prod_{j=0}^Nj!}{\pi^{N(N-1)/2}}i_1^{-1}\f_{\Herm(N)} i_1 \frac{\phi}{\Delta^2}(s),\quad \phi(x):=\frac{\pi^{N(N-1)/2}}{\prod_{j=0}^Nj!}\Delta^2(x)i_1^{-1}\f^{-1}_{\Herm(N)} i_1  \s\phi(x),\quad x,s\in\R^N,
	\end{equation}
	where $[\phi/\Delta^2](x)=\phi(x)/\Delta^2(x)$ and similarly elsewhere where $\Delta^2$ appears.
\end{definition}

This definition looks a bit involved with the function $\phi/\Delta^2$ because we want to stick with the notation of~\cite{ZK2020}. This will help the reader to identify the results from this work which are applied in the present one.

	Note that \eqref{2.2.6} is well defined as $i_1$ is a bijection between invariant functions on $\Herm(N)$ and symmetric functions of the eigenvalues as the normalised Haar measure of $\U(N)$ is unique. An explicit integral expression of $\s f$ is given by
	\begin{equation}
		\s \phi(s)=\int_{\R^N}\phi(x)K(x,s)\dv x,\quad s\in\R^N,
	\end{equation}
	with $K(x,s)$ given in~\eqref{2.2.5}, and its inversion formula has the form
	\begin{equation}
		\phi(x)=\s^{-1}\Big[\s\phi(s)\Big](x):=\frac{(-1)^{N(N-1)/2}}{\prod_{j=0}^{N}(j!)^2}\Delta(x)^2\int_{\R^N}\s\phi(s)\,K(x,-s)\,\Delta(s)^2\,\frac{\dv s}{(2\pi)^N}
	\end{equation}	
	when $\phi/\Delta^2\in \mathscr S_{S_N}(\R^N)$ is symmetric. These are the formulas we have employed as definitions in~\cite{ZK2020} and will be used throughout the present work. We would like to highlight when $\psi=\phi/\Delta^2\in \mathscr S_{S_N}(\R^N)$ is a Schwartz function on $\mathbb{R}^N$ so is $\mathcal{S}\phi=\mathcal{S}(\Delta^2\psi)$ since $K(x,s)$ is entire  in all of its $2N$ arguments and its derivatives maximally grow polynomially for real arguments. Similarly $[\mathcal{S}^{-1}\psi]/\Delta^2$ is a Schwartz function as it is essentially the same formula as $\mathcal{S}(\Delta^2\psi)$ apart from a reflection at the origin and a normalisation constant.

With the help of spherical and Fourier transforms of a test function $\psi\in \mathscr{S}_{S_N}(\R^N)$ one can easily define those for tempered distributions in a weak sense.

\begin{definition}[Fourier and Spherical Transform on Tempered Distributions]\

	For every Schwartz function $\psi\in \mathscr S_{S_N}(\R^N)$, the Fourier and spherical transforms of a tempered distribution $f$ on $\R^N$, that is symmetric in its entries, are given by
	\begin{equation}\label{2.2.9}
		\int _{\R^N}\Delta^2(s)\psi(s)\s f(\dv s):=\int _{\R^N} \s[\Delta^2\psi](x) f(\dv x),
	\end{equation}
	\begin{equation}\label{2.2.10}
		\int _{\R^N} \psi(s)\f f(\dv s):=\int _{\R^N} \f\psi(x) f(\dv x).
	\end{equation}
\end{definition}

The definition reflects Plancherel's theorem which is essentially enforced in this way. Indeed, when noticing that for instance the Fourier transform is up to a normalisation a unitary operator and we understand the integral over the measure $f$ as a dual vector (linear functional) and the Schwartz function $\psi$ as a vector, we see that Eq.~\eqref{2.2.10}  can be  equivalently written as $\int _{\R^N} \psi(s) f(\dv s)=\int _{\R^N} \f^{-1}\psi(x) \f f(\dv x)$ which is Plancherel's theorem. Moreover, because both $\s[\Delta^2.]$ and $\f_{\R^N}$ are isomorphisms of Schwartz functions, one can easily see that both $\Delta^2\s f$ and $\f f$ are well-defined tempered distributions.

\subsection{Derivative principle}\label{sec:der}

As mentioned before, the derivative principle relates the joint probability distributions of the eigenvalues and of the diagonal entries for an invariant ensemble. In particular it shows that the former is equal to a linear differential operator acting on the latter. It was firstly stated in~\cite{Faraut} and then rediscovered and got its name in the studies of quantum information theory (see e.g. \cite{CDKW14,MZB16}). These are summarised and stated as an existence and uniqueness theorem in~\cite[Cor 3.4]{ZK2020}, with an analytical conditions of $F$ assumed for technical purposes.

To recall the proposition given in~\cite[Corollary~3.4]{ZK2020}, we need to introduce the set
\begin{equation}\label{tilde_L}
	\tilde L^1(\R^N):=\bigg\{g\in L^1(\R^N): \int_{\R^N}|x^a\partial_x^b\, g(x)|\dv x<\infty,\forall a,b\in\mathbb N_0^N\text{ and }|a|,|b|\le\frac{N(N-1)}{2}\bigg\},
\end{equation}
where $L^1(\R^N)$ are the Lebesgue integrable functions on $\R^N$.
Here $a, b$ are multi-indices and we employed the notation $x^a=x_1^{a_1}\ldots x_N^{a_N}$ and $|a|=a_1+\ldots+a_N$. Identifying Schwartz functions as Lebesgue functions one can readily check that $\mathscr S(\R^N)\subset \tilde L^1(\R^N)$. Certainly, the Fourier transform and its inverse exist for any function in the set~\eqref{tilde_L}, and so do their derivatives up to order $N(N-1)/2$, namely via the formula~\eqref{Fourier_derivative}. Once again we denote the subset of symmetric functions by $L^1_{S_N}(\R^N)$ and $\tilde L^1_{S_N}(\R^N)$.

\begin{proposition}[Derivative Principle for Densities~\cite{Faraut,CDKW14,MZB16,ZK2020}]\label{p2.3.1}\

	Let $X\in \Herm(N)$ be a $\U(N)$-invariant matrix with the joint probability density of
	\begin{equation}\label{p2.3.1.cond}
		 {\rm the\ eigenvalues}\ f_\eig\in L_{S_N}^1(\R^N)\ {\rm and\ of\ the\ diagonal\ entries}\ f_\diag\in \tilde L_{S_N}^1(\R^N).
\end{equation}
 Then there exists a unique symmetric function $w\in \tilde L^1(\R^N)$, such that
	\begin{equation}\label{2.3.2}
		f_\eig(x)=\frac{1}{\prod_{j=0}^Nj!}\Delta(x)\Delta(-\partial_x)w(x),
	\end{equation}
	for almost all $x\in\R^N$. Moreover one has the relations
	\begin{equation}\label{2.3.3}
		\s f_\eig(s)=\f w(s)
	\end{equation} 
	and
	\begin{equation}
		w(x)=f_\diag(x)
	\end{equation}
	for almost all $x,s\in\R^N$.
\end{proposition}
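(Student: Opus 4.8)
The plan is to reduce the whole statement to the single transform identity $\s f_\eig=\f f_\diag$ and then to recognise the composition $\s^{-1}\circ\f$ as the differential operator in~\eqref{2.3.2}. \emph{Step 1 (the transform identity).} Put $w:=f_\diag$; it is symmetric because the diagonal entries of a $\U(N)$-invariant matrix are exchangeable (conjugation by a permutation matrix lies in $\U(N)$), and $w\in\tilde L^1(\R^N)$ holds by assumption, which already settles the relation $w=f_\diag$. Both $\s f_\eig(s)$ and $\f f_\diag(s)$ equal the characteristic function of $X$ at the diagonal matrix $\diag(s)$: since $\tr X\diag(s)=\sum_j X_{jj}s_j$ one has $\mathbb E[\exp(i\tr X\diag(s))]=\f f_\diag(s)$, while writing $X=U\diag(x)U^{\dagger}$ with $x\sim f_\eig$ and $U$ independent and Haar-distributed (a consequence of invariance, cf.\ subsection~\ref{s2.1}), the inner average over $U$ produces exactly the HCIZ kernel $K(x,s)$ of~\eqref{2.2.5}, whence $\mathbb E[\exp(i\tr X\diag(s))]=\int_{\R^N}K(x,s)f_\eig(x)\,\dv x=\s f_\eig(s)$. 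As $\exp(i\tr X\diag(s))$ is bounded and continuous but not Schwartz, I would legitimise these steps by dominated convergence (all measures involved are probability measures) or by a preliminary mollification; this is~\eqref{2.3.3}.

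\emph{Step 2 (the differential identity).} The membership $f_\diag\in\tilde L^1(\R^N)$ is exactly what makes $g(x):=\frac{1}{\prod_{j=0}^Nj!}\Delta(x)\Delta(-\partial_x)f_\diag(x)$ a well-defined function in $L^1(\R^N)$, since expanding $\Delta(x)\Delta(-\partial_x)$ produces only terms $x^a\partial_x^b f_\diag$ with $|a|,|b|\le N(N-1)/2$, which are integrable by~\eqref{tilde_L}. I would compute $\s g$ from the integral formula $\s g(s)=\int_{\R^N}g(x)K(x,s)\,\dv x$ and integrate by parts to move $\Delta(-\partial_x)$ off $f_\diag$ and onto $\Delta(x)K(x,s)$. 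This is permissible because $\Delta(x)K(x,s)=\prod_{j=0}^{N-1}j!\,\det[e^{ix_js_k}]_{j,k=1}^N/\Delta(s)$ and all its $x$-derivatives are bounded in $x$ (each $x$-derivative only brings down a factor $is_k$), so the boundary integrals over spheres of radius $R$ are dominated by $\int_{\partial B_R}|\partial_x^{b'}f_\diag|\,\dv\sigma$ with $|b'|<N(N-1)/2$, which vanishes along some sequence $R_k\to\infty$ because $\partial_x^{b'}f_\diag\in L^1(\R^N)$. A direct calculation with~\eqref{2.2.5} shows $\Delta(\partial_x)\big(\Delta(x)K(x,s)\big)$ is a constant multiple of the fully symmetric sum $\sum_{\rho\in S_N}\exp(ix^\top s_\rho)$ (the antisymmetric exponentials of the Vandermonde-type determinant become symmetric after applying the Vandermonde operator of derivatives); integrating against $f_\diag$ and using $\sum_{\rho\in S_N}\f f_\diag(s_\rho)=N!\,\f f_\diag(s)$ (as $\f f_\diag$ is symmetric), the combinatorial prefactors collapse — the constant $1/\prod_{j=0}^Nj!$ in~\eqref{2.3.2} is precisely the one that makes them do so — and one obtains $\s g=\f f_\diag=\s f_\eig$. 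Since $\s$ is an isomorphism on symmetric Schwartz functions, hence injective on symmetric tempered distributions, and $g,f_\eig\in L^1(\R^N)$, this forces $g=f_\eig$ almost everywhere, which is~\eqref{2.3.2}.

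\emph{Step 3 (uniqueness and the main obstacle).} If $w'\in\tilde L^1(\R^N)$ is another symmetric function satisfying~\eqref{2.3.2}, then the computation of Step~2 applied with $w'$ in place of $f_\diag$ gives $\f w'=\s f_\eig=\f w$, and injectivity of the Fourier transform on $L^1(\R^N)$ yields $w'=w$ almost everywhere. The two places requiring care are the bookkeeping of the normalisation constants produced by the HCIZ formula and the vanishing of the boundary terms in the integration by parts; the latter is where $\tilde L^1(\R^N)$ rather than plain $L^1(\R^N)$ is genuinely needed, and I expect it to be the main obstacle. It can be sidestepped by running the whole argument weakly — testing~\eqref{2.3.2} against symmetric Schwartz functions $\phi\in\mathscr S(\R^N)$ and pushing all derivatives onto $\phi$ — at the cost of recovering the pointwise almost-everywhere identity from its weak form afterwards. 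Assuming a smooth density throughout reproduces~\cite[Corollary~3.4]{ZK2020}.
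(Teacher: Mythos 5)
Your proposal is sound and its mathematical backbone coincides with the one used in the paper, though the two arguments are packaged differently. Note first that the paper does not actually supply a proof of Proposition~\ref{p2.3.1}: it is explicitly presented as a restatement of~\cite[Cor.~3.4]{ZK2020} (for densities), and what the paper proves is its measure-theoretic generalisation, Theorem~\ref{p2.2}. That proof rests on the single algebraic identity~\eqref{2.3.7}, namely $\frac{1}{\prod_{j=0}^N j!}\Delta(\partial_x)\bigl(\Delta(x)\phi(x)\bigr)=\f\s^{-1}\phi$ for symmetric Schwartz $\phi$, which is then used \emph{weakly} by pushing all derivatives onto the test function. Your Step~2 rederives precisely the same identity from scratch via $\Delta(\partial_x)\bigl(\Delta(x)K(x,s)\bigr)\propto\sum_\rho e^{ix^\top s_\rho}$, and your Step~1, computing the characteristic function of $X$ at $\diag(s)$ two ways, is what the paper records as $\E[\exp(i\tr XS)]=\E[K(x,s)]=\E[\exp(ix_\diag^\top s)]$ immediately after the proof of Theorem~\ref{p2.2}. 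So the mathematics is the same; you simply argue in the strong (pointwise) formulation.

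The place where the two routes genuinely diverge is the one you flag as ``the main obstacle.'' By working weakly, the paper never integrates by parts against $f_\diag$ and so never needs the boundary terms to vanish — the $\tilde L^1(\R^N)$ hypothesis is used only to make $\Delta(x)\Delta(-\partial_x)w$ a well-defined $L^1$ function and to apply Fourier uniqueness. Your pointwise route commits to showing that the boundary contributions from the iterated integration by parts (there are $N(N-1)/2$ first-order steps, each producing its own surface term) vanish. Your $R_k$-sequence device is viable, but you should say explicitly that you pick one sequence that works simultaneously for all the intermediate orders $|b'|<N(N-1)/2$ (possible because the sum of the relevant $L^1(\dv R)$ functions is still in $L^1$). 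Otherwise the argument as written only disposes of one boundary term at a time. Two further minor points: you should note that $g$ is symmetric (because $\Delta(x)\Delta(-\partial_x)$ is an $S_N$-invariant operator acting on the symmetric $f_\diag$), which is needed before invoking injectivity of $\s$ on symmetric objects; and your appeal to ``injectivity of $\s$ on symmetric tempered distributions'' to conclude $g=f_\eig$ a.e.\ from $\s g=\s f_\eig$ is cleanest if you reroute it through the ordinary Fourier transform on $L^1$: you already have $\s g=\f f_\diag=\s f_\eig$, and the HCIZ kernel lets you translate $\s g=\s f_\eig$ into equality of $\f[\Delta^2 g]$-type objects, but it is simpler to compare $\f$-transforms of the $L^1$ functions directly. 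The bookkeeping of the prefactor and phase (your $1/\prod_{j=0}^N j!$, and the $i^{N(N-1)/2}$ that naturally arises from $\Delta(is)$) should be tracked to the end rather than asserted to ``collapse'', but this is a matter of care rather than of substance.
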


On the one hand, this proposition is a powerful tool for analysing sums of invariant ensembles on $\Herm(N)$. On the other hand, it gives a unique characterisation of such ensembles in terms of their diagonal entry distributions which is advantageous compared to the eigenvalues. We would like to illustrate this. Let $A,B,C\in\Herm(N)$ be invariant with eigenvalue distribution $f_A, f_B, f_C$ and diagonal entry distribution $f_{\diag}^{(A)}, f_{\diag}^{(B)}, f_{\diag}^{(C)}$, satisfying the conditions in Proposition~\ref{p2.3.1}. Assume $A+B=C$, then their diagonal entries also add component-wise in this sum, so that they satisfy the multivariate convolution relation
	\begin{equation}
	f_{\diag}^{(A)}\ast f_{\diag}^{(B)} = f_{\diag}^{(C)},
	\end{equation}
	This relation reduces a sum of two independent random matrices into a sum of two independent random vectors (the diagonal entries), and by applying~\eqref{2.3.2} with $w=f_{\diag}$ the eigenvalue distributions can be recovered. This methodology is very helpful when considering central limit theorems for invariant ensembles as it can, thereby, be reduced to the ordinary central limit theorem for random vectors in $\R^N$.

Theorem~\ref{p2.2} is the natural generalisation of Proposition~\ref{p2.3.1} to distributions, meaning we relax the condition~\eqref{p2.3.1.cond}. Then, we do not need to rely on a density function of the probability measure which is crucial for our purposes. The difference in $w$ and $f_{\diag}$ for distributions lies in the fact that the agreement is only weakly. This means they agree after integrating over a suitable test function which are in our case Schwartz functions that are permutation invariant in their $N$ arguments.

\begin{proof}[Proof of Theorem~\ref{p2.2}]
	We first prove the existence of $w$, i.e., Eq.~\eqref{2.3.5} with $w=f_\diag$. Along a derivation similar to the proof of~\cite[Proposition 3.1]{ZK2020} one has
	\begin{equation}\label{2.3.7}
		\frac{1}{\prod_{j=0}^{N}j!}\Delta(\partial_x)\Big(\Delta(x)\phi(x)\Big)=\f\s^{-1} \phi
	\end{equation}
	for any symmetric Schwartz function $\phi$ on $\R^N$.
	Therefore the right hand side of~\eqref{2.3.5} can be written as
	\begin{align}
		\int \f\s^{-1}\phi(x)\,f_\diag(\dv x) &= \int_{\Herm(N)} i_2 \f \s^{-1} \phi(X) F(\dv X)=\int_{\Herm(N)} \la i_2 \f \s^{-1} \phi(UXU^{\dagger})\ra F(\dv X),
	\end{align}
	recalling the definition~\eqref{def.i2} of the map $i_2$.
	The second line is obtained with the help of Eq.~\eqref{2.2.3}.
	
	After writing $\f$ explicitly, we have
	\begin{equation}
	\begin{split}
	\int \f\s^{-1}\phi(x)\,f_\diag(\dv x)
	=&\int_{\Herm(N)} \la\int_{\R^N} \exp[i\tr UXU^{\dagger}\diag(s)]\s^{-1} \phi(s)\,\dv s\ra F(\dv X)\\
	=&\int_{\Herm(N)} \int_{\R^N} \la\exp[i\tr UXU^{\dagger}\diag(s)]\ra\s^{-1} \phi(s)\,\dv s\, F(\dv X)\\
	=&\int_{\Herm(N)} i_1\s\s^{-1} \phi(X) F(\dv X)\\
	=&\int_{\Herm(N)} i_1\phi(X) F(\dv X),
	\end{split}
	\end{equation}
	and this is exactly equal to the left hand side of~\eqref{2.3.5}; recalling the definition~\eqref{def_ePDF2}. Making use of~\eqref{2.3.7} and replacing $\phi$ by $\s \phi$ in~\eqref{2.3.5} with $\phi/\Delta^2\in\mathscr{S}_{S_N}(\mathbb{R}^N)$, Eq.~\eqref{2.3.3} can then be obtained from the bijectivity of $\s: \Delta^2\mathscr S_{S_N}(\R^N)\mapsto \mathscr S_{S_N}(\R^N)$.
	
	For the uniqueness of $w$ in distribution, it is equivalent to show that
	\begin{equation}
		\int_{\R^N}\f\s^{-1} \psi(x)u(\dv x)=0,\quad \forall \psi\in\mathscr S_{S_N}(\R^N)\Rightarrow u(\dv x) = 0,\ {\rm meaning}\ u\ {\rm is\ the\ constant\ zero\ measure},
	\end{equation}
	for all Borel measures $u$. This is a simple corollary of the fact that $\s^{-1}: \mathscr S_{S_N}(\R^N)\to\Delta^2\mathscr S_{S_N}(\R^N)$ and $\f:\Delta^2\mathscr S_{S_N}(\R^N)\to\Delta^2(\partial_x)\mathscr S_{S_N}(\R^N)$ are injections. The latter is clear when writing 
	\begin{equation}
	\mathcal{F}(\Delta^2\phi)(x)=\Delta^2(\partial_x)\mathcal{F}\phi(x)
	\end{equation}
	and using the fact that a Fourier transform of a Schwartz function is a Schwartz function.
	
\end{proof}

In a probability theoretic context, the Fourier transform is referred to as the characteristic function. One notices the integrals in~\eqref{2.2.9} and~\eqref{2.2.10} are absolutely integrable.  Fubini's theorem can be applied to obtain alternative expressions for the two transforms
\begin{equation}
\s f(s)=\mathbb E[ K(x,s)],\quad
\f f(s)=\mathbb E [\exp(ix^\top s)],
\end{equation}
where the expectation is taken with respect to the real random vector $x\in\R^N$ with measure $f(dx)$  and $x^T$ is the transpose of the vector $x$. Therefore, for the two random Hermitian matrices $X$ and $S$ with eigenvalues $x$ and $s$ and diagonal entries $x_\diag$, Theorem~\ref{p2.2} can be rewritten as
\begin{equation}
	\E[ \exp (i\tr XS)]=\E[K(x,s)]=\E[ \exp (ix_\diag^\top s)].
\end{equation}
Hereafter, we will refer any of the three expression as the characteristic function of $X$ as they all agree, evidently.

\section{Stable Invariant Ensembles}\label{s3}

\subsection{Identifications of Stable Invariant Ensembles}\label{s3.1}

A random vector $y\in\R^N$ is called stable when the sum of two statistically independent copies of this vector $y_1+y_2$ is equal in distribution to $\sigma\, y+\delta y$ with $\sigma>0$ a rescaling and $\delta y\in\R^N$ a non-random shift.
A full classification of stable random vectors is given in terms of their characteristic functions. A random vector $y\in\R^d$ is stable if and only if its characteristic function is given by~\cite{Rvaceva}
\begin{equation}\label{3.1.1}
	\mathbb E[\exp( i y^\top s)]=\exp\left(-\gamma\int_{r\in\mathbb S^{d-1}}h(\dv r)\nu_\alpha(s^\top r)+i y_0^\top s\right),
\end{equation}
where $\gamma\ge 0$ is a scaling of the distribution, $h$ is a Borel probability measure on the unit sphere $\mathbb S^{d-1}$ in $\R^d$, called the \textit{spectral measure}, and $y_0\in\R^d$ is a fixed vector acting as a shift. The index $\alpha\in(0,2]$ is the stability exponent and tells us  how fast the distribution drops off at infinity. The spectral measure $h(\dv r)$ describes the anisotropy of the distribution and, thence, plays the role of the asymmetry index in univariate probability theory, usually denoted by $\beta$ for L\'evy distributions which should not be confused with the Dyson index in random matrix theory. The function $\nu_\alpha$ is explicitly given by~\eqref{stable.vector}. We denote such a stable distribution by $S(\alpha,\gamma h,y_0)$. The combination $\gamma h$ is born out of the fact that these two parts of the distribution come always in this combination.

For $\alpha=2$ we obtain the Gaussian case where $v_0$ is equal to the mean of the random vector and the integral  with the measure $\gamma h$ creates the corresponding covariance matrix. Another trivial case is the Dirac delta distribution that can be readily regained by setting $\gamma=0$. Equation~\eqref{3.1.1} then only contains a fixed shift, which corresponds to a fixed vector $y=y_0$. As this latter case is less interesting, hereafter we always consider $\gamma>0$.

A particular subset of stable distributions we would like to point out are the \textit{strictly stable distributions}. They satisfy the condition that the sum of two copies of the random vector $y$ is in distribution equal to the rescaling of the very same random vector without adding any additional fixed shift. If $\alpha\ne 1$, this is achieved when the shift vector $y_0=0$. However when $\alpha=1$,  the spectral measure must satisfy the vector-valued equation
\begin{equation}\label{stable.alpha1}
	\int_{\|r\|^2=1} h(\dv r)r=0
\end{equation}
with $\|r\|=\sqrt{\sum_{j=1}^d r_j^2}$ being the Euclidean norm on $\R^d$.
This equation results from the logarithm which satisfies the rescaling property $\log(\sigma |s^Tr|)=\log|s^Tr|+\log(\sigma)$ for any $\sigma>0$. We underline that $y_0$ can be indeed arbitrary in this case to get a strictly stable distribution.
For more details, we refer the reader to the textbook~\cite{ST94}.

To carry the construction and classification of stable distributions over to random matrices, we identify $\Herm(N)$ with $\R^{N^2}$.

\begin{definition}[Stable and Strictly Stable Random Matrix Ensembles]\label{def:stableRM}\

A random matrix $X\in\Herm(N)$ is \textit{stable} when for the sum $X_1+X_2$ of two random matrices $X_1, X_2\in\Herm(N)$ independently drawn and identical in distribution to $X$ there is a fixed scaling $\sigma>0$ and a fixed matrix $X_0\in\Herm(N)$ such that $X_1+X_2$ is equal in distribution to $\sigma X+X_0$. The random matrix $X$ is called \textit{strictly stable} when $X_0=0$.
\end{definition}

The inner product $s^Tr$ is in the current case given by the  Hilbert-Schmidt inner product $(S,R)\mapsto\tr SR$ with $S,R\in\Herm(N)$. Especially, the norm is then $\sqrt{\tr R^2}$.  Then, the analogue of~\eqref{stable.vector}  for Hermitian random matrices which are stable is given by
\begin{equation}\label{3.1.3}
	\E [\exp(i \tr   YS)]=\exp\left(-\gamma\int_{\mathbb{S}_{\Herm(N)}}H(\dv R)\,\nu_\alpha(\tr   SR)+i\tr  Y_0S \right),
\end{equation}
with $X_0$ being a fixed Hermitian matrix and $H$ being a probability measure on the compact set $\mathbb{S}_{\Herm(N)}:=\{R\in\Herm(N)|\tr   R^2=1 \}\simeq \mathbb S^{N^2-1}$, meaning the Frobenius norm defines this kind of sphere. For convenience we denote ensembles which have the characteristic function~\eqref{3.1.3} as $S(\alpha,\gamma H,Y_0)$. The spectral measure $H$ is a Borel probability measure on $\mathbb{S}_{\Herm(N)}$.

So far there is nothing new; what we wrote as the identification of $\Herm(N)$ with $\R^{N^2}$ is a triviality. Indeed, the theory of Wigner matrices~\cite{CB1994,BJNPZ2007,BG2008,TBT2016,AM2001,PSFG2010} with independent L\'evy-stable distributions as entries is covered by the above definition of stability. The choice of the spectral measure in~\eqref{3.1.3} is given by a sum of weighted Dirac delta functions on the standard real basis of $\Herm(N)$, i.e.,
\begin{equation}
\int_{\mathbb{S}_{\Herm(N)}}H(\dv R) \Phi(R)=\sum_{a=1}^N[d_+ \Phi(e_a)+d_-\Phi(-e_a)]+2o\sum_{1\leq a<b\leq N}\int_0^{2\pi}A(\dv \varphi) \Phi(e_{a,b}(\varphi))
\end{equation}
for any test function $\Phi\in\mathscr S(\mathbb{S}_{\Herm(N)})$. The constants $d_+,d_-,o\geq0$ and $N(d_++d_-)+N(N-1)o=1$ are fixed and the two sets of Hermitian matrices are given by $\{e_a\}_{m,n}=\delta_{m,a}\delta_{n,a}$ and $\{e_{a,b}(\varphi)\}_{m,n}=(e^{i\varphi}\delta_{m,a}\delta_{n,b}+e^{-i\varphi}\delta_{m,b}\delta_{n,a})/\sqrt{2}$ where $\delta_{a,b}$ is the Kronecker delta. The measure $A(\dv \varphi)$ is a Borel probability measure on $[0,2\pi)$ and describes how anisotropic the off-diagonal element are distributed.

The novel idea of the present work is to combine the stability condition with the invariance under the conjugate action of the unitary group $\U(N)$. Then the eigenvectors are Haar distributed regardless which stable measure one has chosen. Thus we would like to remove this part of the information from the spectral measure $H(\dv R)$.  We have summarise our findings in the Theorem~\ref{p3.1} which will be proven in subsection~\ref{s3.2}. Statement (3) of this theorem can be further interpreted that $y_\diag$ has a stable distribution, too. This is trivial, as the marginal distributions of stable distribution must be stable under matrix addition. The following theorem states the exact relationship between the spectral measures of stable ensembles and their diagonal stable distributions. It is also proven in subsection~\ref{s3.2}. We recall that the measure $h_{\rm diag}$ in the following theorem is related to the spectral measure $H$ as given in~\eqref{def_diagPDF}.

\begin{theorem}[Identification with Stable Distributions in $\R^N$]\label{p3.1a}\

	We employ the notations of Theorem~\ref{p3.1} and $1_N=(1,\ldots,1)\in\R^N$. The statements of Theorem~\ref{p3.1} are equivalent to the respective claims:
\begin{enumerate}
\item	When $\alpha\ne 1$, $y_\diag$ follows a stable distribution $S(\alpha,\gamma g_\alpha,y_0 1_N)$ with its spectral measure $g_\alpha$ given by
	\begin{equation}\label{g_alpha}
		\int_{r\in\mathbb S^{N-1}} g_\alpha(\dv r)\psi(r):=\int_{0<\|t\|\le 1}h_\diag(\dv t)\,\|t\|^{\alpha}\psi\left(\frac{t}{\|t\|}\right)
	\end{equation}
	for any bounded measurable function $\psi\in B(\mathbb S^{N-1})$.
\item	When $\alpha=1$, $y_\diag$ instead follows $S(1,\gamma g_1,(y_0+y_1)1_N)$ with $g_1$ given by~\eqref{g_alpha} and $y_1$ satisfying
	\begin{equation}\label{3.1.12b}
		y_1 1_N=\frac{2}{\pi}\int_{0<\|t\|\le 1} h_\diag(\dv t)\,t\log \|t\|
	\end{equation}
\end{enumerate}	
	 
\end{theorem}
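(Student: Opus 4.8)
The plan is to reduce everything to Theorem~\ref{p3.1}, in particular to statement (3), which already expresses the characteristic function of $y_\diag$ as
\[
	\mathbb E[\exp(iy_\diag^\top s)]=\exp\left(-\gamma\int_{\|t\|\le1}h_\diag(\dv t)\,\nu_\alpha(s^\top t)+iy_0 1_N^\top s\right).
\]
So the task is purely one of rewriting the integral $\int_{\|t\|\le1}h_\diag(\dv t)\,\nu_\alpha(s^\top t)$ in the canonical polar form $\int_{\mathbb S^{N-1}}g_\alpha(\dv r)\,\nu_\alpha(s^\top r)$ (plus, when $\alpha=1$, an extra linear term that can be absorbed into the shift). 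The engine is the scaling behaviour of $\nu_\alpha$: write $t=\|t\|\,r$ with $r=t/\|t\|\in\mathbb S^{N-1}$, and use that $\nu_\alpha(\lambda u)=\lambda^\alpha\nu_\alpha(u)$ for $\lambda>0$ when $\alpha\ne1$, while for $\alpha=1$ one has the inhomogeneous rule $\nu_1(\lambda u)=\lambda\nu_1(u)+\tfrac{2}{\pi}\lambda u\log\lambda$. Both identities follow directly from the explicit formula~\eqref{stable.vector}, and I would record them as a one-line lemma before the main computation.

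For $\alpha\ne1$: substitute $t=\|t\|r$, pull out $\|t\|^\alpha$, and get
\[
	\int_{0<\|t\|\le1}h_\diag(\dv t)\,\nu_\alpha(s^\top t)=\int_{0<\|t\|\le1}h_\diag(\dv t)\,\|t\|^\alpha\,\nu_\alpha\!\left(s^\top\frac{t}{\|t\|}\right)=\int_{\mathbb S^{N-1}}g_\alpha(\dv r)\,\nu_\alpha(s^\top r),
\]
where the last equality is precisely the definition~\eqref{g_alpha} of $g_\alpha$ applied to the bounded function $\psi(r)=\nu_\alpha(s^\top r)$ (bounded on the sphere for each fixed $s$). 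One should also check $g_\alpha$ is a genuine finite Borel measure — finiteness is $\int\|t\|^\alpha h_\diag(\dv t)\le h_\diag(\{\|t\|\le1\})<\infty$ since $\alpha>0$ — though note it need not be a probability measure, matching the ``$\gamma g_\alpha$'' normalisation convention already flagged in the text. This yields $\mathbb E[\exp(iy_\diag^\top s)]=\exp(-\gamma\int_{\mathbb S^{N-1}}g_\alpha(\dv r)\nu_\alpha(s^\top r)+iy_0 1_N^\top s)$, i.e.\ $y_\diag\sim S(\alpha,\gamma g_\alpha,y_01_N)$; comparison with the classification~\eqref{3.1.1} (which is an iff) closes statement (1), and the reverse implication follows by running the same substitution backwards, so the equivalence with Theorem~\ref{p3.1} is established.

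For $\alpha=1$: the same substitution now produces an extra term,
\[
	\int_{0<\|t\|\le1}h_\diag(\dv t)\,\nu_1(s^\top t)=\int_{\mathbb S^{N-1}}g_1(\dv r)\,\nu_1(s^\top r)+\frac{2}{\pi}\int_{0<\|t\|\le1}h_\diag(\dv t)\,(s^\top t)\log\|t\|,
\]
using $\nu_1(\|t\|u)=\|t\|\nu_1(u)+\tfrac{2}{\pi}\|t\|u\log\|t\|$ with $u=s^\top(t/\|t\|)$ so that $\|t\|u=s^\top t$. The second term is $s^\top\big(\tfrac{2}{\pi}\int h_\diag(\dv t)\,t\log\|t\|\big)=s^\top(y_1 1_N)$ by the definition~\eqref{3.1.12b} of $y_1$; since it is linear in $s$, it combines with the $iy_0 1_N^\top s$ term inside the exponential to give $i(y_0+y_1)1_N^\top s$, whence $y_\diag\sim S(1,\gamma g_1,(y_0+y_1)1_N)$. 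The main (minor) obstacle is bookkeeping the sign conventions: one must verify that the constant $y_1$ as defined in~\eqref{3.1.12b} comes out with exactly the right sign and factor $2/\pi$ after the $\log\|t\|$ term is moved through the $-\gamma$ and the $i$ in the exponent — this is a routine but error-prone check against the explicit $\nu_1$. Everything else is substitution and invoking the uniqueness half of the multivariate stable classification~\eqref{3.1.1}.
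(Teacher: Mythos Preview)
Your proposal is correct and follows essentially the same route as the paper's proof: both use the homogeneity $\nu_\alpha(\|t\|u)=\|t\|^\alpha\nu_\alpha(u)$ for $\alpha\ne1$ to pass from the ball integral to the spherical one via the definition of $g_\alpha$, and both handle $\alpha=1$ via the inhomogeneous scaling of $\nu_1$, absorbing the linear remainder into the shift (using permutation invariance of $h_\diag$ to see the remainder is proportional to $1_N$), with the converse from uniqueness of the spectral measure. One small correction: the scaling identity for $\nu_1$ carries a factor $i$, namely $\nu_1(\lambda u)=\lambda\nu_1(u)+\tfrac{2i}{\pi}\lambda u\log\lambda$, which is exactly the ``error-prone check'' you already flagged.
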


Equation~\eqref{3.1.12b} is essentially a consequence that the unitary invariance  implies  the permutation invariance of $h_\diag$. Therefore the right hand side of~\eqref{3.1.12b} must be the same for each entry of the vector. One can also check that $g_\alpha$ is a positive measure, as the right hand side of~\eqref{g_alpha} is positive for any bounded positive function $\psi$. However, $g_\alpha$ may not be normalised due to the additional factor $\|t\|^\alpha$. In particular it would be only normalised when $h_\diag$ is concentrated on the sphere $\mathbb S^{N-1}$, meaning $g_\alpha$ and $h_\diag$ are the same in distribution.

A strictly stable distribution is a special case where adding two of these only rescales it. As already stated, when the stability index $\alpha\ne 1$ we only need to set $y_0=0$. This is evidently true when we write the characteristic function in terms of the matrix $S\in\Herm(N)$, its diagonal entries $s_{\diag}$ or its eigenvalues $s_{\rm eig}$. However, the case $\alpha=1$ is different. For random matrices the condition~\eqref{stable.alpha1} for the spectral measure reads in terms of the matrix-valued equation
\begin{equation}\label{3.1.10a}
\int_{\mathbb{S}_{\Herm(N)}} H(\dv R)R=0.
\end{equation}
For the respective induced measures $h_\eig$ and $h_\diag$ this condition reads as follows which we prove immediately.

\begin{proposition}[Strictly Stable Ensembles with $\alpha=1$]\label{p3.2}
	We use the setting in Theorem~\ref{p3.1} with $\alpha=1$ and $Y$ be the stable and invariant random matrix. The following three statements are equivalent:
	\begin{enumerate}
		\item $Y$ is strictly stable, i.e., it satisfies \eqref{3.1.10a};
		\item the spectral measure for the diagonal entries  $h_\diag$ satisfies
		\begin{equation}\label{3.1.11a}
		\int_{\|t\|\le 1} h_\diag(\dv t)t=0;
		\end{equation}
		\item the spectral measure for the eigenvalues $h_\eig$ satisfies
		\begin{equation}\label{3.1.11}
		\int_{r\in\mathbb S^{N-1}} h_\eig(\dv r)r=0.
		\end{equation}
	\end{enumerate}
\end{proposition}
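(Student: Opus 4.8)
The plan is to reduce all three conditions to the single assertion that the Hermitian matrix
\begin{equation*}
	M:=\int_{\mathbb S_{\Herm(N)}}H(\dv R)\,R
\end{equation*}
vanishes. This integral converges entry-wise because $H$ is a probability measure on the compact set $\mathbb S_{\Herm(N)}$ and the identity map $R\mapsto R$ is bounded and continuous there; since each $R$ is Hermitian and $H$ is positive, $M\in\Herm(N)$. By the discussion preceding the proposition, statement (1) is exactly the matrix equation $M=0$, i.e.\ the form \eqref{3.1.10a} of the vector condition \eqref{stable.alpha1}. Since $Y$ is invariant, Theorem~\ref{p3.1} guarantees that $H$ is invariant, $H(\dv R)=H(U\dv RU^\dagger)$, which yields $UMU^\dagger=M$ for every $U\in\U(N)$; hence $M$ commutes with every unitary matrix and therefore with every $N\times N$ matrix, so $M=c\,I_N$ for a single real constant $c$, and $M=0$ holds if and only if $c=0$.

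It then remains to recover $c$ from $h_\diag$ and from $h_\eig$. Since $h_\diag$ is the law of the diagonal-entry vector $(R_{11},\ldots,R_{NN})$ under $H$, and $\tr R=\sum_j R_{jj}=1_N^\top(R_{11},\ldots,R_{NN})$ is a permutation-symmetric function of that vector which also equals $\sum_j r_j=1_N^\top r$ as a symmetric function of the eigenvalues, taking the trace of $M$ gives
\begin{equation*}
	Nc=\tr M=\int_{||t||\le 1}1_N^\top t\,h_\diag(\dv t)=\int_{r\in\mathbb S^{N-1}}1_N^\top r\,h_\eig(\dv r),
\end{equation*}
all integrals being finite because $h_\diag$ is supported in the closed unit ball (from $\sum_j R_{jj}^2\le\tr R^2=1$) and $h_\eig$ on $\mathbb S^{N-1}$. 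Moreover $h_\diag$ is permutation invariant, as the push-forward of the conjugation-invariant $H$ under a map equivariant under permutation matrices, and $h_\eig$ is permutation invariant because it is the law of the unordered eigenvalues; hence $\int_{||t||\le 1}t\,h_\diag(\dv t)$ and $\int_{r\in\mathbb S^{N-1}}r\,h_\eig(\dv r)$ are permutation-invariant vectors, that is, multiples of $1_N$, and the displayed trace identity forces each of them to equal $c\,1_N$. Consequently $M=0$, the condition \eqref{3.1.11a}, and the condition \eqref{3.1.11} are each equivalent to $c=0$, and hence to one another, which is the asserted equivalence of (1), (2) and (3).

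The only step requiring care is the bookkeeping that identifies the diagonal entries and the trace of $M$ with the first moments of $h_\diag$ and $h_\eig$; this relies on the definitions of $h_\diag$ and $h_\eig$ in Theorem~\ref{p3.1}, on their compact supports (so that $R$, and hence its entries, are integrable against $H$), and on the permutation invariance forced by the conjugation invariance of $H$, which lets one pass from the symmetrised trace identity to the component-wise conditions \eqref{3.1.11a} and \eqref{3.1.11}. Everything else is the elementary Schur-type reduction $M=c\,I_N$. As an alternative one could deduce (1)$\Leftrightarrow$(2) from Theorem~\ref{p3.1a} together with the fact that a marginal of a strictly stable vector is again strictly stable, rewriting $\int g_1(\dv r)\,r=0$ as \eqref{3.1.11a} via \eqref{g_alpha}; but the direct computation above is shorter and yields all implications simultaneously.
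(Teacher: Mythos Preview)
Your argument is correct and is genuinely different from the paper's. The paper proves the cycle (1)$\Rightarrow$(2)$\Rightarrow$(3)$\Rightarrow$(1): the first step by marginalising over the off-diagonal entries, the second by invoking the derivative principle (Theorem~\ref{p2.2}) applied to the mollified test function $e^{-\epsilon\|r\|^2}\tr\diag(r)$ and then letting $\epsilon\to 0$, and the third by rebuilding $H$ from $h_\eig$ via the Haar-distributed eigenvectors. Your route bypasses the derivative principle entirely: you observe that $M=\int R\,H(\dv R)$ is forced by the $\U(N)$-invariance of $H$ (Schur's lemma) to be $cI_N$, and then use permutation invariance of $h_\diag$ and $h_\eig$ to show that both vector conditions \eqref{3.1.11a} and \eqref{3.1.11} reduce to the single scalar equation $c=0$ obtained from the trace. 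This is more elementary and treats the three statements symmetrically rather than as a cycle; the paper's route, by contrast, illustrates the derivative principle at work, which is consistent with the paper's broader emphasis on that tool.
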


\begin{proof}[Proof of Proposition~\ref{p3.2}]
	Statement (1)$\Rightarrow$(2) is a triviality as one only needs to integrate over the off-diagonal entries of the matrix $R$ to obtain~\eqref{3.1.11a}.
	
	For (2)$\Rightarrow$(3), we combine the integration domains $\mathbb{S}^{N-1}\subset\R^N$ into the measure by defining
	\begin{equation}
	\int_{\R^N}\varphi(r)\hat{h}_\eig(\dv r):=\int_{\mathbb{S}^{N-1}}\varphi(r)h_\eig(\dv r)\ {\rm and}\ \int_{\R^N}\varphi(r)\hat{h}_\diag(\dv r):=\int_{\|r\|\leq 1}\varphi(r)h_\diag(\dv r)
	\end{equation}
	for an arbitrary bounded measurable function $\varphi$ on $\R^N$. What we want to do is to apply the derivative principle. The problem is that the components $t_j$ are neither invariant under permutation nor are they Schwartz functions on $\R^N$. Hence, we consider the Schwartz function $\varphi(r)=e^{-\epsilon\|r\|^2}\tr \diag(r)$ for an arbitrary $\epsilon>0$.
	Indeed, we have for any $j=1,\ldots,N$
	\begin{equation}
	\int_{r\in\mathbb S^{N-1}} h_\eig(\dv r)r_j=\frac{e^{\epsilon}}{N}\int_{\R^N} \hat{h}_\eig(\dv r)e^{-\epsilon\|r\|^2}\tr \diag(r)
	\end{equation}
	 due to the invariance of the remaining parts of the integral under permutations of the eigenvalues. Now we are prepared to apply the derivative principle~\eqref{2.3.5} yielding
	\begin{equation}
	\int_{r\in\mathbb S^{N-1}} h_\eig(\dv r)r_j=\frac{e^{\epsilon}}{N\prod_{j=0}^{N}j!}\int_{\|r\|\le 1} h_\diag(\dv r)\Delta(-\partial_r)\left[\Delta(r)e^{-\epsilon\|r\|^2}\tr \diag(r)\right].
	\end{equation}
	As $\epsilon>0$ has been arbitrary we can take the limit $\epsilon\to0$. This limit commutes with the integral as the integration is over a bounded set, and the integrand is $\Delta(-\partial_r)\left[\Delta(r)e^{-\epsilon\|r\|^2}\tr \diag(r)\right]<c<\infty$ for any $\|r\|\leq 1$ with an $\epsilon$ independent constant. This leads us to
	\begin{equation}
	\int_{r\in\mathbb S^{N-1}} h_\eig(\dv r)r_j=\frac{1}{N\prod_{j=0}^{N}j!}\int_{\|r\|\le 1} h_\diag(\dv r)\Delta(-\partial_r)\left[\Delta(r)\tr \diag(r)\right].
	\end{equation}
	When using
	\begin{equation}
	\Delta(-\partial_r)=\det[(-\partial_{r_a})^{b-1}]_{a,b=1,\ldots, N} \quad {\rm and} \quad \Delta(r)\tr \diag(r)=\det\left[\begin{array}{c|c} r_a^{b-1} & r_a^N \end{array}\right]_{\substack{a=1,\ldots,N\\b=1,\ldots,N-1}},
	\end{equation}
	where the latter means that the first $N-1$ columns are given by the matrix entries $r_a^0,\ldots, r_a^{N-2}$ and the last column by $r_a^N$,
	and the Laplace expansion of both determinants, one can readily check 
	\begin{equation}
	\Delta(-\partial_r)[\Delta(r)\tr \diag(r)]=\prod_{j=0}^{N}j!\ \tr \diag(r).
	\end{equation}
	This means we are left with
	\begin{equation}
	\int_{r\in\mathbb S^{N-1}} h_\eig(\dv r)r_j=\frac{1}{N}\int_{\|r\|\le 1} h_\diag(\dv r)\tr \diag(r)=0
	\end{equation}
	because of~\eqref{3.1.11a} for any $j=1,\ldots,N$. This is equivalent to~\eqref{3.1.11}.
	
	Finally for (3)$\Rightarrow$(1),  we take the map $\diag$ of~\eqref{3.1.11} so that it becomes a diagonal matrix and then multiply from the left with $U\in\U(N)$ and from the right with $U^\dagger$ and then integrate over $U$ with respect to the normalised Haar measure $\dv U$ on $\U(N)/\U^N(1)$. Since $H(\dv R)=h_{\rm eig}(\dv r)\dv U$ with $R=U\diag(r)U^\dagger$, we obtain~\eqref{3.1.10a}. This finishes the proof.
\end{proof}

\subsection{Proofs of Theorem~\ref{p3.1} and~\ref{p3.1a}}\label{s3.2}

The idea of proving Theorem~\ref{p3.1} is by making use of the invariance and the averages~\eqref{am_functions} and~\eqref{am_measures}. As a preparation we are going to recall the following lemma from a well-known Fourier-Laplace transform of the modulus.

\begin{lemma}[]\label{p3.1.1} Let $\sgn$ be the signum function. For $x\in\R$ and $a>0$, one has the following identities.
	\begin{equation}\label{3.1.5}
		|x|^a=\frac{\Gamma(a+1)}{2\pi i}\int_{1+i\R}\frac{e^{zx}+e^{-zx}}{z^{a+1}}\dv z,
	\end{equation}
	\begin{equation}\label{3.1.6}
		|x|^a\sgn(x)=\frac{\Gamma(a+1)}{2\pi i}\int_{1+i\R}\frac{e^{zx}-e^{-zx}}{z^{a+1}}\dv z.
	\end{equation}
	The contour of both integrals is parallel to the imaginary axis starting at $1-i\infty$ and ending at $1+i\infty$ and the complex root $z^a$ is the principle value with the branch cut along the negative real line.
\end{lemma}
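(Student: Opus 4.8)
The plan is to read the two integrals as Bromwich (inverse Laplace) integrals and evaluate them by closing or deforming the contour, distinguishing the cases $x>0$, $x<0$ and $x=0$. The only analytic input needed is the elementary identity: for any $c>0$, any $a>0$, and any $y\in\R$,
\[
\frac{1}{2\pi i}\int_{c-i\infty}^{c+i\infty}\frac{e^{zy}}{z^{a+1}}\,\dv z=
\begin{cases}\dfrac{y^{a}}{\Gamma(a+1)},& y>0,\\[2mm]0,& y\le 0,\end{cases}
\]
where $z^{a+1}$ denotes the principal branch with cut along $(-\infty,0]$, so that the vertical contour $\{\mathrm{Re}\,z=c\}$ lies inside the domain of holomorphy of the integrand.

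First I would establish this identity. For $y<0$ one closes the contour by a large semicircular arc $|z-c|=R$ in the half-plane $\mathrm{Re}\,z\ge c$: the integrand is holomorphic there, and since $|e^{zy}|=e^{y\,\mathrm{Re}\,z}\le e^{yc}$ is bounded while $|z^{-(a+1)}|\le R^{-(a+1)}$ on the arc, the arc contribution is $O(R^{-a})\to0$, so by Cauchy's theorem the Bromwich integral vanishes. For $y=0$ one instead shifts the vertical contour to $\{\mathrm{Re}\,z=R\}$ and lets $R\to\infty$; this is legitimate because $a>0$ makes $z^{-(a+1)}$ integrable along vertical lines, and the value of the line integral tends to $0$, matching $|0|^{a}=0$. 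For $y>0$ one deforms the Bromwich line to a Hankel contour $\mathcal H$ wrapping the cut $(-\infty,0]$, with a fixed small circle about the origin (the deformation is justified by Cauchy's theorem together with the decay of $e^{zy}z^{-(a+1)}$ on the connecting arcs in $\mathrm{Re}\,z<0$, which holds precisely because $y>0$); after the substitution $t=zy$ this becomes $y^{a}$ times Hankel's integral $\frac{1}{2\pi i}\int_{\mathcal H}e^{t}t^{-(a+1)}\,\dv t=\frac1{\Gamma(a+1)}$. Equivalently one may bypass the contour manipulations and simply invoke the Laplace inversion theorem for the transform pair $\int_{0}^{\infty}y^{a}e^{-zy}\,\dv y=\Gamma(a+1)\,z^{-(a+1)}$ $(\mathrm{Re}\,z>0)$, noting that $y\mapsto y^{a}\chi_{(0,\infty)}(y)$ is continuous, locally of bounded variation, and of exponential order $0$, hence recovered pointwise for $y\ne0$.

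With this identity in hand the lemma is pure bookkeeping. Applying it with $y=x$ and with $y=-x$: if $x>0$ the first evaluates to $x^{a}/\Gamma(a+1)$ and the second to $0$; if $x<0$ the roles swap; and if $x=0$ both vanish. Hence, for every $x\in\R$,
\[
\frac{1}{2\pi i}\int_{1+i\R}\frac{e^{zx}+e^{-zx}}{z^{a+1}}\,\dv z=\frac{|x|^{a}}{\Gamma(a+1)},\qquad
\frac{1}{2\pi i}\int_{1+i\R}\frac{e^{zx}-e^{-zx}}{z^{a+1}}\,\dv z=\frac{|x|^{a}\sgn(x)}{\Gamma(a+1)},
\]
and multiplying through by $\Gamma(a+1)$ yields \eqref{3.1.5} and \eqref{3.1.6}. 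That the statement fixes $c=1$ is immaterial: any $c>0$ gives the same integrals, since the integrand is holomorphic in $\mathrm{Re}\,z>0$ and decays on vertical lines.

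The main obstacle is the one genuinely analytic step: justifying the deformation of the Bromwich line onto the Hankel contour for $y>0$, i.e.\ controlling the integrand on the arcs connecting the vertical line to the contour around the cut, and recalling (or citing) Hankel's representation of $1/\Gamma$. Everything else --- the $y\le0$ cases and the final sign analysis in $x$ --- is routine. Using the Laplace-inversion route removes even this obstacle at the cost of quoting the inversion theorem as a black box.
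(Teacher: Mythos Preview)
Your proof is correct and follows essentially the same route as the paper: both reduce the two identities to the single Bromwich integral $\frac{\Gamma(a+1)}{2\pi i}\int_{1+i\R}e^{zx}z^{-(a+1)}\dv z=x^{a}\chi_{x>0}$, handle $x\le 0$ by closing the contour to the right, and treat $x>0$ by deforming around the negative real axis (what you call the Hankel contour) after a rescaling. Your write-up is if anything more explicit about the arc estimates and the appeal to Hankel's formula for $1/\Gamma$, but the underlying argument is the same.
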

\begin{proof}
	After writing $|x|^a=x^a\chi_{x>0}+(-x)^a\chi_{x<0}$ and $|x|^a\sgn(x)=x^a\chi_{x>0}-(-x)^a\chi_{x<0}$ with $\chi$ being the indicator function, it is immediate that both identities are resulting from
	\begin{equation}\label{3.1.8}
		\frac{\Gamma(a+1)}{2\pi i}\int_{1+i\R}\frac{e^{zx}\dv z}{z^{a+1}}=x^a \chi_{x>0},
	\end{equation}
	for all $x\in\R$ and $a>0$. This is a well-known Fourier-Laplace transform which can be proven via complex analysis. For $x=0$ the integral vanishes because the integrand drops off fast enough.  In the case that $x<0$ we close the contour around the half axis $[1,\infty)$ whose interior is free of any singularities and, thence, vanishes due to Cauchy's theorem. When $x>0$ we can rescale $z\rightarrow z/x$ which is gives a the contour over the line $1/x+i\mathbb{R}$. One can shift the contour back to $1+i\mathbb{R}$ as no singularity is crossed. Finally we use the identity
	\begin{equation}
		\frac{\Gamma(a+1)}{2\pi i}\int_{1+i\R}\frac{e^{z}\dv z}{z^{a+1}}=1,
	\end{equation}
	which can be derived by closing the contour around the negative real axis. This finishes the proof.
\end{proof}

\begin{proof}[Proof of Theorem~\ref{p3.1}]
	
	Let us firstly show the equivalence of statements (1) and (2). In fact, (2)$\Rightarrow$(1) is trivial since the invariance is given in the statement. Thence, we only need to show its converse.
	
	Let us denote the exponent of the characteristic function by 
	\begin{equation}
	\xi(S)=\log\left[\mathbb{E}(e^{i \tr SX})\right],
	\end{equation}
	where the logarithm may experience a discontinuity when $\mathbb{E}(e^{i \tr SX})$ crosses the negative real line. This exponent is related to the function
	\begin{equation}
		\xi_0(S)=-\gamma \int_{\mathbb S_{\Herm(N)}}H(\mathrm d R)\nu_\alpha(\tr RS)+i\tr Y_0S.
	\end{equation}
	like $\xi(S)=\xi_0(S)+2\pi i k(S) $ with $k(S)\in\Z$. The invariance under the conjugate action of $\U(N)$ tells us that the characteristic function is unchanged under the conjugate action, too, so that $\xi(USU^\dagger)=\xi(S)$ for all fixed $U\in\U(N)$,
	 which implies
	\begin{equation}\label{diff_phi}
		\xi_0(USU^\dagger)-\xi_0(S)=2\pi n i
	\end{equation}
	with $n=k(S)-k(USU^\dagger)\in\mathbb Z$. We underline that this does not imply $\lim_{U\to I_N}\xi(USU^\dagger)=\xi(S)$ when $\exp[\xi_0(S)]<0$.
	
	We  want to show that the left hand side of Eq.~\eqref{diff_phi} is a continuous function in $U$, so that it can be only $n=0$ as $\U(N)$ is connected with the identity. For this purpose, it is sufficient to show that $\xi_0$ is continuous in $S$.
	It is not difficult to see that $\nu_\alpha(u)$ is continuous in $u$ and that $\tr RS$ is continuous in $S$. With $\tr R^2=1$, one has a bound for $|\nu_\alpha(\tr RS)|$ (c.f.~\cite[Lemma 8]{KZ23})
	\begin{equation}
		|\nu_\alpha(\tr RS)|\le
		\begin{cases}
			\displaystyle\frac{\|S\|^\alpha}{|\cos(\pi\alpha/2)|}, &\alpha\in(0,1)\cup(1,2],
			\vspace{1em}\\
			\displaystyle\|S\|\sqrt{1+\left(\frac{2\log\|S\|}{\pi}\right)^2}, &\alpha=1,
		\end{cases}
	\end{equation}
	where the bound on the right hand side is independent of $R$, which is integrable with respect to the normalised measure $H$. Therefore a dominated convergence theorem can be applied to obtain
	\begin{equation}
		 \lim_{\tr E^2\to 0}\int_{\mathbb S_{\Herm(N)}}H(\mathrm d R)\left[\nu_\alpha(\mathrm{Tr}\, R(S+E))-\nu_\alpha(\mathrm{Tr}\, RS)\right]=0.
	\end{equation}
	Together with the fact that $\tr Y_0S$ is continuous in $S$, we see that $\xi_0(S)$ is continuous in $S$.
	
	Since now the left hand side of~\eqref{diff_phi} is continuous in both $U$ and $S$, also $n=k(S)-k(USU^\dagger)$ is continuous in both $U$ and $S$, meaning it is a constant. By taking $U=I_N$ we see that $n=k(S)-k(USU^\dagger)\equiv 0$. Therefore for all $U\in\U(N)$ and $S\in\Herm(N)$, it is
	\begin{equation}
		\xi_0(USU^\dagger)=\xi_0(S).
	\end{equation}
	
	This prelude allows us to integrate over the $\U(N)$-action and exploit the group invariance,
	\begin{equation}
	\xi(S)=\int_{U\in\U(N)} \dv U\xi(USU^\dagger).
	\end{equation}
	Substituting~\eqref{3.1.3} into the right side, we arrive at
	\begin{equation}\label{3.1.12}
	\xi(S)=-\gamma\int_{\mathbb{S}_{\Herm(N)}} H(\dv R)\,\la\nu_\alpha(\tr USU^\dagger R)\ra+i\la\tr USU^\dagger Y_0\ra=:-\gamma I_1+I_2.
	\end{equation}
	The second term $I_2$ only depends on the eigenvalues of both $S$ and $Y_0$, and since the $U$-integral can be interchanged with the trace, one obtains
	\begin{equation}
		I_2=i\tr  \diag(s) \la U^\dagger Y_0U\ra=\frac{i}{N}\tr s\tr  Y_0.
	\end{equation}
	Setting $y_0=\tr  Y_0/N$ we notice that we could have used $Y_0=y_0I_N$ from the start.
	
	For the first term $I_1$, we denote $\tilde{H}(\dv R):=\la H(U\dv R U^\dagger)\ra$. Because $\nu_\alpha(\tr SR)$ is bounded for any $R$ on its domain and by the properties of the average over the Haar measure of $\U(N)$, one has
	\begin{equation}
		I_1=\int_{\mathbb{S}_{\Herm(N)}} \tilde{H}(\dv R)\nu_\alpha(\tr SR),\quad \tilde{H}(U\dv R U^\dagger)=\tilde{H}(\dv R),
	\end{equation}
	for all $U\in\U(N)$. Substituting these into~\eqref{3.1.12} and taking the exponentials on both sides show that $Y$ has the stable distribution $S(\alpha,\gamma \tilde{H},y_0I_N)$. Renaming $\tilde{H}\rightarrow H$ finishes this part of the proof.
	
	It is also easy to see (2)$\Rightarrow$(3) since the invariance of $S$ implies
	\begin{equation}\label{3.2.12}
		\mathbb E [\exp( i\tr  YS)]=\exp\left(-\gamma\int_{\mathbb{S}_{\Herm(N)}} H(\dv R)\nu_\alpha(\tr  USU^\dagger R)+i\tr  y_0 I_NS\right)
	\end{equation}
	for all $U\in\U(N)$. We choose a $U$ which diagonalises $S$ and notice that $\tr  \diag(s)R$ only depends on the diagonal entries of $R$. This means we can reduce the measure to the marginal distribution of the diagonal elements which is statement (3). 
	
	Next we are going to prove (2)$\Rightarrow$(4). This requires only an explicit calculation of the following equation~\eqref{3.2.14}, which is given by taking an average over $U$ in~\eqref{3.2.12}
	\begin{equation}\label{3.2.14}
		\mathbb E [\exp( i\tr  YS)]=\exp\left(-\gamma\int_{\mathbb{S}_{\Herm(N)}} H(\dv R)\la\nu_\alpha(\tr  U\diag(s)U^\dagger \diag(r))\ra+iy_0 1_N^\top s\right),
	\end{equation}
	where $s$ and $r$ are respectively the eigenvalues of $S$ and $R$. We now simplify $\la\nu_\alpha(\tr  U\diag(s)U^\dagger \diag(r))\ra$ using Lemma~\ref{p3.1.1}. In the $\alpha\ne 1$ case one substitutes~\eqref{3.1.5} and~\eqref{3.1.6} into~\eqref{3.1.12} to see
	\begin{equation}\label{3.2.15}
	\begin{split}
		\la\nu_\alpha(\tr  USU^\dagger R)\ra=
		&\frac{\Gamma(\alpha+1)}{2\pi i}\Bigg(\int_{1+i\R}\frac{\dv z}{z^{\alpha+1}} \Big\langle\exp[z\tr  U\diag(s)U^\dagger \diag(r)]+\exp[-z\tr  U\diag(s)U^\dagger \diag(r)]\Big\rangle\\
		&\hspace*{-0.5cm}-i\tan\frac{\pi\alpha}{2}\int_{1+i\R}\frac{\dv z}{z^{\alpha+1}} \Big\langle\exp[z\tr  U\diag(s)U^\dagger \diag(r)]-\exp[-z\tr  U\diag(s)U^\dagger \diag(r)]\Big\rangle \Bigg).
	\end{split}
	\end{equation}
 	Substituting the HCIZ integral~\eqref{2.2.5} into this equation and afterwards expanding the determinant give
	\begin{equation}
	\begin{split}
		\la\nu_\alpha(\tr  U\diag(s)U^\dagger \diag(r))\ra=
		&\frac{\prod_{j=0}^{N-1}j!}{\Delta(s)\Delta(r)} \sum_{\rho\in\mathrm{S}_N}\sgn(\rho)\frac{\Gamma(\alpha+1)}{2\pi i}\\
		&\times\Bigg(\int_{1+i\R}\frac{\dv z}{z^{\alpha+1+N(N-1)/2}} \left(\prod_{j=1}^N e^{zs_jr_\rho(j)}+(-1)^{N(N-1)/2} \prod_{j=1}^N e^{-zs_jr_\rho(j)}\right)\\
		&\hspace*{-1.5cm}-i\tan\left(\frac{\pi\alpha}{2}\right)\int_{1+i\R}\frac{\dv z}{z^{\alpha+1+N(N-1)/2}} \left(\prod_{j=1}^N e^{zs_jr_\rho(j)}-(-1)^{N(N-1)/2} \prod_{j=1}^N e^{-zs_jr_\rho(j)}\right)\Bigg).
	\end{split}
	\end{equation}
	We can now apply Lemma~\ref{p3.1.1} again to simplify the average to
	\begin{equation}
		\la\nu_\alpha(\tr  USU^\dagger R)\ra=\frac{\prod_{j=0}^{N-1}j!}{\Delta(s)}\frac{\Gamma(\alpha+1)}{\Gamma(\alpha+1+N(N-1)/2)}\sum_{\rho\in\mathrm S_N}\frac{\nu_{\alpha}(s^\top r_\rho)(\sgn(s^\top r_\rho)|s^\top r_\rho|)^{N(N-1)/2}}{\Delta(r_\rho)}.
	\end{equation}
	Notice $\sgn(s^\top r_\rho)|s^\top r_\rho|=s^\top r_\rho$. To modify this further to the form as~\eqref{3.1.10}, one notices that the $H$ integral after normalisation is only an expectation of eigenvalues of $H$. So one can replace the measure $H$ with its eigenvalue part $h_\eig$.
	
	For $\alpha=1$, we exploit the relation $x\log |x|=\partial_a\big|_{a=1}|x|^a$ for $x\ne 0$. Again by making use of Lemma~\ref{3.1.8}, one obtains
	\begin{equation}
	\begin{split}
		\la\nu_1(\tr  USU^\dagger R)\ra=
		&\frac{1}{2\pi i}\bigg(\int_{1+i\R}\frac{\dv z}{z^{2}} \Big\langle\exp[z\tr U\diag(s)U^\dagger \diag(h)]+\exp[-z\tr [U\diag(s)U^\dagger \diag(h)]\Big\rangle\\
		&+\frac{2i}{\pi}\frac{\partial}{\partial a}\bigg|_{a=1}\int_{1+i\R}\frac{\dv z}{z^{a+1}} \Big\langle\exp[z\tr U\diag(s)U^\dagger \diag(h)]+\exp[-z\tr U\diag(s)U^\dagger \diag(h)]\Big\rangle\bigg).
	\end{split}	
	\end{equation}
	The interchange of the $U$-integral and the derivative can be justified by Lebesgue's dominated convergence theorem as the integrand is uniformly bounded. Then one can obtain the result of the $\alpha=1$ case by the same reasoning as for $\alpha\neq1$.
	
	Finally, as we have both (2)$\Rightarrow$(3) and (2)$\Rightarrow$(4), their converses are easily shown by the uniqueness of the derivative principle, i.e. the one-to-one relation between the invariant matrix distribution, eigenvalue distribution and diagonal entry distribution.
\end{proof}

The proof of Theorem~\ref{p3.1a} is rather straightforward, as it is evaluating the right hand side of~\eqref{3.1.8a}.

\begin{proof}[Proof of Theorem~\ref{p3.1a}]
	For $\alpha\ne 1$, we are going to show
	\begin{equation}
		\int_{\|t\|\le 1}h_\diag(\dv t)\nu_\alpha(s^\top t)=\int_{r\in\mathbb S^{N-1}}g_\alpha(\dv r)\nu_\alpha(s^\top r).
	\end{equation}
	This is obtained by noticing $\nu_\alpha(s^\top t)=\|t\|^\alpha\nu_\alpha(s^\top t/\|t\|)$, and for each $s$ the function $\nu_\alpha(s^\top t)$ (in terms of $t$) is bounded inside the unit ball.
	
	For $\alpha=1$, we instead have
	\begin{equation}
		\int_{\|t\|\le 1}h_\diag(\dv t)\nu_1(s^\top t)=\int_{r\in\mathbb S^{N-1}}g_1(\dv r)\nu_1(s^\top r)+\int_{\|t\|\le 1}h_\diag(\dv t)\frac{2i}{\pi}\, s^\top t\log \|t\|.
	\end{equation}
	This second term can be absorbed into the shift as $iy_11_N^\top s$ as the permutation invariance induced by the unitary invariance of the measure $H$ tells us that the average over $t\log \|t\|$ with respect to the measure $h_\diag(\dv t)$ must be proportional to $1_N$.
	
	The converse, meaning the statements of Theorem~\ref{p3.1a} imply the third statement in Theorem~\ref{p3.1}, is simply obtained by the uniqueness of the spectral measure.
\end{proof}

\section{Examples}\label{s5}

We will construct several stable invariant ensembles by choosing particular spectral measures. Theorem~\ref{p3.1} part (2) allows us to set the shift to be $y_0I_N$ for any $y_0\in\R$, as well as the spectral measure following $H(\dv R)=H(U\dv R U^\dagger )$ for any $U\in\U(N)$. Without loss of generality one always assumes there is no shift.

\subsection{Elliptical Stable Ensembles}\label{sec.ell}

In the previous section, we have discussed rather general stable and invariant ensembles. To illustrate our framework we would like to ask what the corresponding elliptical stable invariant ensembles are. 

\begin{definition}[Elliptical Stable Matrix Ensemble]\label{def:elliptic}\ 

An elliptical stable random matrix $Y\in\Herm(N)$ has the characteristic function
\begin{equation}\label{elliptic}
\mathbb E [\exp( i\tr  YS)]=\exp \left[-\left(\sum_{a,b,c,d=1}^N\Sigma_{ab,cd}S_{ab}S_{cd}^*\right)^{\alpha/2}+i\tr  Y_0 S\right]
\end{equation}
with $S_{cd}^*$ the complex conjugate of $S_{cd}$, $0<\alpha\leq 2$,  a fixed matrix $Y_0\in\Herm(N)$ and $\Sigma$ is a $N^2\times N^2$ Hermitian positive definite matrix satisfying the condition $H_{ab,cd}=H_{ba,dc}^*$.
\end{definition}

This definition is exactly the definition of elliptical ensembles for real vectors after we have identified $\Herm(N)$ with $\R^{N^2}$. The condition $H_{ab,cd}=H_{ba,dc}^*$ only reflects that $\{\sum_{a,b,c,d=1}^N\Sigma_{ab,cd}S_{ab}\}_{c,d=1,\ldots, N}$ is still a Hermitian matrix. Thus, $\Sigma$ is actually a positive real symmetric matrix written in uncommon basis, because $\Herm(N)$ is a real vector space. 

These ensembles can be readily created via a Gaussian ensemble and the randomly distributed variance. The construction follows the one used in~\cite{BCP2008,AV2008,AAV2009,AFV2010} where general elliptical matrix ensembles were created that have been, however, not only stable ones.

\begin{proposition}[Relation Elliptical Stable Ensembles and their Gaussian Counterparts]\label{prop:Gauss.ellip}\

Let $Y\in\Herm(N)$ be a random matrix drawn from the elliptical ensemble of Definition~\ref{def:elliptic} with $\alpha\in(0,2)$. Then it can be generated by a Gaussian elliptical ensemble with the density
\begin{equation}\label{elliptic.Gaussian}
P(Y|\sigma_0,Y_0,\Sigma)=\frac{1}{2^{N(N-1)/2}\pi^{N^2/2}\sigma_0^{N^2}\sqrt{\det\Sigma}}\exp \left[-\frac{1}{2\sigma_0^2}\sum_{a,b,c,d=1}^N\{\Sigma^{-1}\}_{ab,cd}(Y-Y_0)_{ab}(Y-Y_0)_{cd}^*\right],
\end{equation}
where $\sigma_0^2/2>0$ is a random variable drawn from the L\'evy $(\alpha/2)$-stable distribution with the characteristic function
\begin{equation}\label{uniform.Levy}
\mathbb E \left[\exp\left(-\frac{\sigma_0^2 k}{2}\right)\right]=\exp[-k^{\alpha/2}]\ {\rm for\ any}\ k\geq0.
\end{equation}
The corresponding univariate probability density will be denoted by $p_\alpha$.
\end{proposition}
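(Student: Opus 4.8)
The plan is to compute the characteristic function of the mixture described in the statement and show that it equals the right-hand side of~\eqref{elliptic}. First I would fix $\sigma_0$ and recall that for the Gaussian elliptical density~\eqref{elliptic.Gaussian} the Fourier transform on $\Herm(N)\simeq\R^{N^2}$ is again Gaussian; concretely, viewing $Y-Y_0$ as a real vector and $\Sigma$ as the (Hermitian, positive definite) covariance form on that vector space, one has
\begin{equation}
\E\big[\exp(i\tr(Y-Y_0)S)\,\big|\,\sigma_0\big]=\exp\!\left(-\frac{\sigma_0^2}{2}\sum_{a,b,c,d=1}^N\Sigma_{ab,cd}S_{ab}S_{cd}^*\right).
\end{equation}
This is the standard Gaussian Fourier identity once one checks that the quadratic form appearing in the exponent of~\eqref{elliptic.Gaussian}, built from $\Sigma^{-1}$, is dual under the Hilbert--Schmidt pairing to the quadratic form built from $\Sigma$; the normalisation constant in~\eqref{elliptic.Gaussian} is exactly what makes the total mass one, so nothing else survives.

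Next I would integrate over $\sigma_0$. Writing $k:=\sum_{a,b,c,d}\Sigma_{ab,cd}S_{ab}S_{cd}^*\ge 0$ (nonnegativity because $\Sigma$ is positive definite), the conditional characteristic function above is $\exp(-\sigma_0^2 k/2)$, and taking the expectation over the L\'evy $(\alpha/2)$-stable variable $\sigma_0^2/2$ with Laplace exponent given by~\eqref{uniform.Levy} yields exactly
\begin{equation}
\E\big[\exp(i\tr(Y-Y_0)S)\big]=\E\!\left[\exp\!\left(-\frac{\sigma_0^2}{2}k\right)\right]=\exp(-k^{\alpha/2})=\exp\!\left[-\Big(\sum_{a,b,c,d=1}^N\Sigma_{ab,cd}S_{ab}S_{cd}^*\Big)^{\alpha/2}\right].
\end{equation}
Multiplying by the shift factor $e^{i\tr Y_0 S}$ reproduces~\eqref{elliptic}, and by uniqueness of the Fourier transform this identifies the mixture with the elliptical stable ensemble. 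I would also remark that the positivity/existence of the subordinator density $p_\alpha$ for $\alpha/2\in(0,1)$ is the classical fact that a one-sided L\'evy $(\alpha/2)$-stable law exists and is supported on $\R_+$, which is why the restriction $\alpha\in(0,2)$ is needed (at $\alpha=2$ one would need $\sigma_0^2/2$ deterministic).

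The only genuinely delicate point is justifying the interchange of the $\sigma_0$-integration with the Hermitian-matrix Fourier integral, i.e.\ Fubini. This is harmless: the integrand $|\exp(i\tr(Y-Y_0)S)|\,P(Y|\sigma_0,Y_0,\Sigma)\,p_\alpha(\sigma_0)$ is bounded by $P(Y|\sigma_0,Y_0,\Sigma)\,p_\alpha(\sigma_0)$, which is a probability density on $\Herm(N)\times\R_+$ and hence jointly integrable, so Fubini--Tonelli applies directly. A secondary bookkeeping step is to verify the duality of the two quadratic forms and the matching of the normalisation constant $2^{N(N-1)/2}\pi^{N^2/2}\sigma_0^{N^2}\sqrt{\det\Sigma}$ with the real-coordinate Gaussian normalisation on $\R^{N^2}$ (the factor $2^{N(N-1)/2}$ comes from the off-diagonal real and imaginary parts each contributing, and the $\sigma_0^{N^2}$ from the scaling of all $N^2$ real coordinates); this is a routine change of variables that I would state but not belabour.
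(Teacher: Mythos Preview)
Your proposal is correct and follows essentially the same approach as the paper: compute the conditional characteristic function of the Gaussian ensemble at fixed $\sigma_0$, then average over $\sigma_0$ via the Laplace identity~\eqref{uniform.Levy}, and conclude by uniqueness of the Fourier transform. Your version is in fact more detailed than the paper's, which simply records the Gaussian integral and the final substitution without the Fubini justification or the normalisation bookkeeping you supply.
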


From the point of view of this proposition, it becomes immediate that the corresponding random matrix ensemble is stable.

\begin{proof}
We prove this statement with the help of the uniqueness of the characteristic function. Here we have to take into account the fact that we need to average over both $Y$ and $\sigma_0$. The average over $Y$ is trivial as it is a multivariate Gaussian integral, i.e.
\begin{equation}
\mathbb{E}[e^{i\tr  YS}]=\mathbb E \left[\exp \left(-\frac{\sigma_0^2}{2}\sum_{a,b,c,d=1}^N\Sigma_{ab,cd}S_{ab}S_{cd}^*+i\tr  Y_0 S\right)\right].
\end{equation}
On the right hand side we still need to integrate over $\sigma_0^2$. In the final step we make use of~\eqref{uniform.Levy} to arrive at~\eqref{elliptic}.
\end{proof}

As mentioned before we are interested in matrix ensembles that are invariant under the conjugate group action of $\U(N)$. Then the matrix $\Sigma$ simplifies drastically.

\begin{lemma}[Elliptical Stable Invariant Matrix Ensemble]\label{p6.1}\

	Assuming the notation of Definition~\ref{def:elliptic} and that $Y$ is an invariant ensemble. Then, there are two constants $\sigma>0$ and $\kappa>-\sigma^2/N$ such that it holds
	\begin{equation}\label{Sigma}
	\Sigma_{ab,cd}=\frac{\sigma^2}{\alpha} \delta_{ac}\delta_{bd}+\frac{\kappa}{\alpha} \delta_{ab}\delta_{cd}
	\end{equation}
	or equivalently $\sum_{a,b,c,d=1}^N\Sigma_{ab,cd}S_{ab}S_{cd}^*=\sigma^2\tr S^2/\alpha+\kappa(\tr S)^2/\alpha$ and $Y_0=y_0I_N$.
\end{lemma}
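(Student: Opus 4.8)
The plan is to translate the $\U(N)$-invariance of $Y$ into an invariance of the quadratic form appearing in~\eqref{elliptic}, and then use the decomposition of $\Herm(N)$ under the conjugation action of $\U(N)$ to pin that form down up to two constants.

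First I would note that invariance of $Y$ makes its characteristic function invariant under $S\mapsto USU^\dagger$ for every $U\in\U(N)$, since $\mathbb{E}[\exp(i\tr Y\,USU^\dagger)]=\mathbb{E}[\exp(i\tr(U^\dagger YU)S)]=\mathbb{E}[\exp(i\tr YS)]$. Writing $Q(S):=\sum_{a,b,c,d=1}^N\Sigma_{ab,cd}S_{ab}S_{cd}^*$, which for $S\in\Herm(N)$ is real and strictly positive for $S\neq0$ because $\Sigma$ is Hermitian positive definite, comparison of the moduli of the two sides of~\eqref{elliptic} evaluated at $S$ and at $USU^\dagger$ forces $Q(USU^\dagger)=Q(S)$, and comparison of the phases then forces $\tr\big((U^\dagger Y_0U-Y_0)S\big)=0$ for all $S\in\Herm(N)$, hence $U^\dagger Y_0U=Y_0$ for all $U$ and therefore $Y_0=y_0I_N$ for some $y_0\in\R$. (Alternatively, since an elliptical stable ensemble is in particular a stable ensemble, $Y_0=y_0I_N$ is immediate from Theorem~\ref{p3.1}(2).)

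It remains to classify the positive-definite quadratic forms $Q$ on $\Herm(N)$ that are invariant under conjugation by $\U(N)$; the case $N=1$ is trivial, so assume $N\geq2$. Decompose $\Herm(N)=\R I_N\oplus\Herm_0(N)$ into trace and traceless parts; this is the isotypic decomposition of the conjugation representation, the line $\R I_N$ carrying the trivial representation and $\Herm_0(N)$ being an irreducible real representation of real type (its complexification is the irreducible adjoint representation on traceless complex matrices). By Schur's lemma the space of invariant symmetric bilinear forms on $\Herm_0(N)$ is one-dimensional, spanned by $(S_0,S_0)\mapsto\tr S_0^2$; every quadratic form on the line $\R I_N$ is invariant and proportional to $(\tr S)^2$; and there is no nonzero invariant bilinear pairing between a trivial and a nontrivial irreducible summand. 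Hence $Q(S)=a\,\tr S_0^2+b\,(\tr S)^2$ with $S_0=S-(\tr S/N)I_N$, and using $\tr S_0^2=\tr S^2-(\tr S)^2/N$ this is $Q(S)=a\,\tr S^2+(b-a/N)(\tr S)^2$.

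Finally I would read off the constants by setting $\sigma^2=\alpha a$ and $\kappa=\alpha(b-a/N)$, which turns $Q$ into $\sigma^2\tr S^2/\alpha+\kappa(\tr S)^2/\alpha$; recognising $\tr S^2=\sum_{a,b,c,d}\delta_{ac}\delta_{bd}S_{ab}S_{cd}^*$ and $(\tr S)^2=\sum_{a,b,c,d}\delta_{ab}\delta_{cd}S_{ab}S_{cd}^*$ yields exactly the tensor~\eqref{Sigma}. Testing positive-definiteness on a nonzero traceless $S$ gives $\sigma^2>0$ and testing on $S=I_N$ gives $\sigma^2+N\kappa>0$, i.e. $\kappa>-\sigma^2/N$; conversely these two conditions suffice, e.g. via the Cauchy--Schwarz bound $|\tr S|^2\le N\sum_a|S_{aa}|^2\le N\|S\|_F^2$. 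The main obstacle is the representation-theoretic step, namely verifying that $\Herm_0(N)$ is an irreducible real representation of real type so that Schur's lemma delivers a one-dimensional space of invariant forms; one can instead argue by hand, conjugating $\Sigma$ first by permutation matrices and diagonal unitary phase matrices to kill all entries except those matching the patterns $(a=c,b=d)$ and $(a=b,c=d)$, and then by a continuous rotation in a $2\times2$ block to relate the two surviving constants, but this is more tedious than the invariant-theory argument.
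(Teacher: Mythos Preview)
Your argument is correct and reaches the same conclusion as the paper, but the entry point and the invariant-theory step are handled differently. The paper passes through the Gaussian representation of Proposition~\ref{prop:Gauss.ellip}: it notes that averaging over $\sigma_0$ does not break invariance, so the exponent of the Gaussian density~\eqref{elliptic.Gaussian} must itself be a $\U(N)$-invariant polynomial of degree~$2$ in $Y$; it then appeals (somewhat loosely, under the name ``Cayley--Hamilton'') to the fact that such invariants are spanned by a constant, $\tr Y$, $\tr Y^2$ and $(\tr Y)^2$. You instead work directly with the characteristic function~\eqref{elliptic}, separate modulus and phase to obtain $Q(USU^\dagger)=Q(S)$ and $Y_0=y_0I_N$, and then classify invariant quadratic forms via the isotypic decomposition $\Herm(N)=\R I_N\oplus\Herm_0(N)$ and Schur's lemma. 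Your route is a little more self-contained (no need for the Gaussian intermediary) and makes the representation-theoretic step explicit; you also verify the positivity constraints $\sigma^2>0$ and $\kappa>-\sigma^2/N$, which the paper's proof leaves implicit. Either approach is fine here, and the core content---that the only $\U(N)$-invariant quadratic forms on $\Herm(N)$ are linear combinations of $\tr S^2$ and $(\tr S)^2$---is the same.
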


This lemma follows essentially from the representation theory of $\U(N)$. The condition $\kappa>-\sigma^2/N$ guarantees the integrability of the density, especially the integrability of the random variable $\tr S$. 

\begin{proof}
We make use of the representation in terms of the elliptical Gaussian random matrix ensemble as the average over $\sigma_0$ does not introduce any symmetry breaking of the group invariance. Hence, the invariance of the elliptical Gaussian under the conjugate action of $\U(N)$ tells us that the exponent of the right hand side of~\eqref{elliptic.Gaussian} has to be the same for $Y$ and any $UYU^\dagger$ with $U\in\U(N)$. As the exponent is a polynomial of order $2$, we can ask for the possible invariance under this conjugate action. Those are given by the Cayley-Hamilton theorem and are given by a constant, the linear function $c_0\tr  Y$ and the two quadratic invariants $c_1\tr  Y^2+c_2(\tr  Y)^2$. This is equivalent with the statement of the lemma.
\end{proof}

The relation to the previous section can be made once we have identified the corresponding spectral measure $H(\dv R)$. We prove this in the next proposition. This statement is a drastic simplification of the spectral measure for general elliptical stable ensembles in~\cite[Proposition 2.5.8]{ST94}.

\begin{proposition}[Spectral Measure of Elliptical Stable Invariant Matrix Ensembles]\label{prop:spec.ellipt}\

Let $N>1$ and we assume the setting of Lemma~\eqref{p6.1}. Then the elliptical ensemble of Lemma~\eqref{p6.1} is drawn from the stable ensemble $S(\alpha,\gamma H, y_0 I_N)$ with the spectral measure and scaling
\begin{equation}\label{ellip.spec}
H(\dv R)=c_3\frac{\mu(\dv R)}{[c_1+c_2(\tr  R)^2]^{(\alpha+N^2)/2}}\quad {\rm and}\quad \gamma=\frac{\Gamma[N^2/2]\Gamma[\alpha+1]}{c_1^{(N^2-1)/2}c_3\sqrt{c_1+Nc_2}\Gamma[(\alpha+N^2)/2]\Gamma[\alpha/2+1]}.
\end{equation}
The constants are
\begin{equation}\label{param.ell}
c_1=\frac{\alpha}{4\sigma^2},\ c_2=-\frac{\alpha \kappa}{4\sigma^2(\sigma^2+N\kappa)}>-\frac{c_1}{N},\ {\rm and}\   c_3=\frac{c_1^{(\alpha+N^2)/2}}{{\ _2F_1}\left(\left.\frac{1}{2},\frac{\alpha+N^2}{2};\frac{N^2}{2}\right|-\frac{c_2 N^2}{c_1}\right)}.
\end{equation}
The measure $\mu(\dv R)$ is the normalised uniform measure on the hypersphere $\mathbb{S}_{\Herm(N)}$ and ${\ _2F_1}$ is the hypergeometric function.
\end{proposition}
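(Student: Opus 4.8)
The plan is to identify the spectral measure $H$ and scaling $\gamma$ of the elliptical invariant ensemble by matching two representations of the same characteristic function: the elliptical form \eqref{elliptic} with $\Sigma$ given by Lemma~\ref{p6.1}, and the general stable form \eqref{3.1.3}. Concretely, using Lemma~\ref{p6.1} the left side reads $\exp[-(\sigma^2\tr S^2/\alpha+\kappa(\tr S)^2/\alpha)^{\alpha/2}+iy_0\tr S]$, and I must produce a probability measure $H$ on $\mathbb{S}_{\Herm(N)}$ and a constant $\gamma>0$ with
\begin{equation}
\gamma\int_{\mathbb{S}_{\Herm(N)}}H(\dv R)\,\nu_\alpha(\tr SR)=\left(\tfrac{\sigma^2}{\alpha}\tr S^2+\tfrac{\kappa}{\alpha}(\tr S)^2\right)^{\alpha/2}.
\end{equation}
Since $Y$ is invariant, the ensemble is elliptical with a \emph{symmetric} (indeed even) law, so only the even part of $\nu_\alpha$ survives in the integral; thus $\nu_\alpha(u)$ may be replaced by $|u|^\alpha/\cos(\pi\alpha/2)$ throughout (the odd, asymmetry-carrying piece integrates to zero against the reflection-invariant $H$). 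This reduces the task to the real-valued identity $\gamma\int H(\dv R)\,|\tr SR|^\alpha = \cos(\pi\alpha/2)\,(\sigma^2\tr S^2/\alpha+\kappa(\tr S)^2/\alpha)^{\alpha/2}$.

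The key computational step is to evaluate $\int_{\mathbb{S}_{\Herm(N)}}|\tr SR|^\alpha\,\mu(\dv R)$ against the \emph{uniform} measure $\mu$ and, more generally, against the candidate $H(\dv R)=c_3\mu(\dv R)/[c_1+c_2(\tr R)^2]^{(\alpha+N^2)/2}$. I would do this by lifting to $\R^{N^2}$: parametrise $R=\|R\|_{HS}\,\hat R$ is automatic on the sphere, but instead integrate over all of $\Herm(N)\cong\R^{N^2}$ with the Gaussian weight coming from the elliptical representation of Proposition~\ref{prop:Gauss.ellip}. The cleanest route: write the right side as a scale mixture. By Proposition~\ref{prop:Gauss.ellip}, $Y=\sqrt{\sigma_0^2}\,G$ where $G$ is the fixed Gaussian elliptical matrix with covariance $\Sigma$ and $\sigma_0^2/2\sim p_\alpha$ (L\'evy $\alpha/2$-stable). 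Then $\tr YS=\sqrt{\sigma_0^2}\,\tr GS$, and $\tr GS$ is a centred real Gaussian with variance $v(S):=\sum\Sigma_{ab,cd}S_{ab}S_{cd}^*=\sigma^2\tr S^2/\alpha+\kappa(\tr S)^2/\alpha$. Hence
\begin{equation}
\mathbb{E}[e^{i\tr YS}]=\mathbb{E}_{\sigma_0}\!\left[e^{-\sigma_0^2 v(S)/2}\right]=e^{-v(S)^{\alpha/2}},
\end{equation}
which recovers \eqref{elliptic}; this is already in the proof of Proposition~\ref{prop:Gauss.ellip}. To extract $H$, I instead express the L\'evy stable mixing as an integral over a one-dimensional stable density and then, crucially, rewrite the \emph{Gaussian} matrix $G$ in polar coordinates on $\R^{N^2}$: $G=\rho\,R$ with $\rho=\|G\|_{HS}\ge 0$ and $R\in\mathbb{S}_{\Herm(N)}$, where the angular law of $R$ is precisely $H(\dv R)\propto \mu(\dv R)/(\text{quadratic form in }R)^{N^2/2}$ for the Gaussian alone, and the extra $\alpha$ in the exponent $(\alpha+N^2)/2$ in \eqref{ellip.spec} comes from absorbing the radial moment $\rho^\alpha$ produced by the stable mixing (using $\int_0^\infty \rho^{\alpha-1}\,(\text{radial stable-Gaussian density})\,\dv\rho$, which is finite for $0<\alpha<2$). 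Matching the quadratic form $\sum\{\Sigma^{-1}\}_{ab,cd}R_{ab}R_{cd}^*$ — whose eigenvalues are $1/c_1$ with multiplicity $N^2-1$ and $1/(c_1+Nc_2)$ once, reflecting the splitting into traceless part and trace — gives the stated $c_1,c_2$, and the inverse-determinant normalisation of the Gaussian together with the surface area of $\mathbb{S}_{\Herm(N)}$ and the Gamma-function value $\int_0^\infty\rho^{\alpha-1}e^{-\rho^2/2}\dv\rho\,$-type integrals produce $c_3$ and $\gamma$; the ${}_2F_1$ in $c_3$ arises from integrating $[c_1+c_2(\tr R)^2]^{-(\alpha+N^2)/2}$ over the sphere, i.e. from the Beta-type integral $\int_{-1}^{1}(1-\xi^2)^{(N^2-3)/2}(c_1+c_2 N^2\xi^2)^{-(\alpha+N^2)/2}\dv\xi$ after singling out the coordinate $\tr R/\sqrt N$ on $\mathbb{S}^{N^2-1}$, which is an Euler integral for ${}_2F_1(\tfrac12,\tfrac{\alpha+N^2}{2};\tfrac{N^2}{2};-c_2N^2/c_1)$.

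The main obstacle is the bookkeeping of constants: one must carefully track (i) the normalisation of the L\'evy $\alpha/2$-stable density $p_\alpha$ pinned down by \eqref{uniform.Levy}, (ii) the surface measure versus the normalised uniform measure $\mu$ on $\mathbb{S}_{\Herm(N)}\simeq\mathbb{S}^{N^2-1}$, (iii) the factor $2^{N(N-1)/2}\pi^{N^2/2}$ in \eqref{elliptic.Gaussian} coming from the real-versus-complex entry convention on $\Herm(N)$, and (iv) the precise form of $\nu_\alpha$ with its $\cos(\pi\alpha/2)$, which must reproduce, after the angular integral, exactly the exponent $v(S)^{\alpha/2}$ with no residual constant. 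A clean way to sidestep some of this is to fix the overall scale by testing against a single convenient $S$ — e.g. $S=I_N$, so $v(I_N)=(\sigma^2+N\kappa)N/\alpha$ and $\tr SR=\tr R$ — which reduces the consistency check to the one-dimensional integral of $|\tr R|^\alpha H(\dv R)$ and forces the value of $\gamma$; a second independent $S$ (say a traceless one) fixes the ratio $c_2/c_1$, and uniqueness of the spectral measure from Theorem~\ref{p3.1} then upgrades this to the full identity. I would present the argument in that order: (1) reduce to the even part of $\nu_\alpha$; (2) establish the Gaussian-mixture polar decomposition and read off the shape of $H$; (3) compute the two scalar integrals to pin $c_1,c_2,c_3,\gamma$; (4) invoke uniqueness of the spectral representation to conclude.
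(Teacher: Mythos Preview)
Your route is genuinely different from the paper's and, once tightened, is arguably more transparent. The paper computes
\[
I_2=\int_{\mathbb{S}_{\Herm(N)}}\frac{|\tr SR|^\alpha}{[c_1+c_2(\tr R)^2]^{(\alpha+N^2)/2}}\,\mu(\dv R)
\]
by purely analytic means: it represents the denominator as a Gamma integral $\int_0^\infty r^{\alpha+N^2-1}e^{-r^2[\cdot]}\dv r$, sets $V=rR$ to obtain a full $\Herm(N)$ integral, linearises $(\tr V)^2$ via an auxiliary Gaussian variable, expresses $|\tr SV|^\alpha$ through the contour integral of Lemma~\ref{p3.1.1}, performs the Gaussian integrals, and then substitutes $\tilde z=z^2$ to re-apply Lemma~\ref{p3.1.1} and land on $(\cdot)^{\alpha/2}$. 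Your idea replaces this machinery by a single Gaussian moment: for the Gaussian $G$ with covariance $\Sigma$ one has on the one hand $\E_G[|\tr SG|^\alpha]=\text{const}\cdot v(S)^{\alpha/2}$ since $\tr SG$ is a scalar Gaussian of variance proportional to $v(S)$, and on the other hand, via $G=\rho R$,
\[
\E_G[|\tr SG|^\alpha]=\text{const}\cdot\int_{\mathbb{S}_{\Herm(N)}}|\tr SR|^\alpha\mu(\dv R)\int_0^\infty\rho^{\alpha+N^2-1}e^{-2\rho^2[c_1+c_2(\tr R)^2]}\dv\rho,
\]
and the $\rho$-integral produces exactly $[c_1+c_2(\tr R)^2]^{-(\alpha+N^2)/2}$. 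Equating the two gives the required identity for all $S$, and the normalisation integral for $c_3$ is the same one-dimensional Beta/hypergeometric computation the paper does for $I_1$.

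Two points need correcting. First, the even part of $\nu_\alpha(u)=|u|^\alpha(1-i\,\sgn(u)\tan(\pi\alpha/2))$ is $|u|^\alpha$, not $|u|^\alpha/\cos(\pi\alpha/2)$; the target identity is $\gamma\int H(\dv R)|\tr SR|^\alpha=v(S)^{\alpha/2}$ with no cosine. Second, the factor $\rho^\alpha$ does \emph{not} come from the stable subordinator; it comes from $|\tr S(\rho R)|^\alpha=\rho^\alpha|\tr SR|^\alpha$ once you pass to polar coordinates, and the argument runs entirely at the level of the Gaussian $G$---Proposition~\ref{prop:Gauss.ellip} is not actually needed. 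This matters for the logic of your steps (3)--(4): if step~(2) is carried out as above, the identity holds for \emph{all} $S$ and uniqueness is automatic, whereas if step~(2) only ``reads off'' a guessed shape, then testing at two special $S$ and invoking uniqueness of the spectral representation cannot upgrade agreement at two points to agreement everywhere. Make step~(2) the rigorous core and drop the appeal to uniqueness.
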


\begin{proof}
We need to show that
\begin{equation}
\gamma \int_{\mathbb{S}_{{\rm Herm}(N)}}H({\rm d}R)|\tr SR|^\alpha\left(1-i \sgn(\tr SR)\tan\frac{\pi \alpha}{2}\right)=\left[\frac{\sigma^2}{\alpha}\tr S^2+\frac{\kappa}{\alpha}(\tr S)^2\right]^{\alpha/2}.
\end{equation}
The integral over $\sgn(\tr SR)|\tr SR|^\alpha$ vanishes as the measure is invariant under the reflection $R\to-R$. For this purpose we choose two constants $c_1>0$ and $c_2>-c_1 N$ and introduce the two integrals
\begin{equation}
\begin{split}
I_1=\int_{\mathbb{S}_{\Herm(N)}}\frac{\mu(\dv R)}{[c_1+c_2(\tr  R)^2]^{(\alpha+N^2)/2}},\quad I_2=\int_{\mathbb{S}_{\Herm(N)}}\frac{|\tr  S R|^\alpha \mu(\dv R)}{[c_1+c_2(\tr  R)^2]^{(\alpha+N^2)/2}}
\end{split}
\end{equation}
with $\mu$ the uniform normalised measure on the sphere $\mathbb{S}_{\Herm(N)}$. The second condition $c_2>-c_1 N$ results from the fact that $(\tr  R)^2\leq N$ when $\tr  R^2=1$. The maximal value is, for example, achieved by $R=I_N/\sqrt{N}$. The integral $I_1=1/c_3$ is the normalisation of the spectral measure $H({\rm d}R)$ while $I_2$ is essentially the function we need to find, i.e,
\begin{equation}
 \int_{\mathbb{S}_{{\rm Herm}(N)}}H({\rm d}R)|\tr SR|^\alpha=\frac{I_2}{I_1}.
\end{equation}

The first integral $I_1$ is needed to find the proper normalisation and $I_2$ shows that we indeed get the elliptical stable invariant ensemble. The signum part in $\nu_\alpha$ can be omitted as it is odd under the reflection $R\rightarrow-R$ while the measure is even. The parameters $c_1$, $c_2$ and $\gamma$ will be fixed at the end by identification with~\eqref{elliptic} and~\eqref{Sigma}.

In the first integral we split the matrix $R=r_1I_N/\sqrt{N}+\tilde{R}$ into $r_1\in[-1,1]$ and its traceless part $\tilde{R}$, i.e., $\tr \tilde{R}=0$. When understanding $R$ is an $N^2$ dimensional normalised real vector and $I_N/\sqrt{N}$ as one specific direction parametrised by the component $r_1$. We can integrate out all other components. This leads to the normalised marginal measure of $r_1$ which is given by 
\begin{equation}
\frac{\Gamma[N^2/2]}{\sqrt{\pi}\Gamma[(N^2-1)/2]}(1-r_1^2)^{(N^2-3)/2} \dv r_1,
\end{equation}
see also~\cite[Eq.~(3.113)]{Fo10}. Then, the integral is equal to the hypergeometric function,
\begin{equation}
I_1=\frac{\Gamma[N^2/2]}{\sqrt{\pi}\Gamma[(N^2-1)/2]}\int_{-1}^1\dv r_1 \frac{(1-r_1^2)^{(N^2-3)/2}}{[c_1+c_2N^2r_1^2]^{(\alpha+N^2)/2}}=\frac{1}{c_1^{(\alpha+N^2)/2}} {\ _2F_1}\left(\left.\frac{1}{2},\frac{\alpha+N^2}{2};\frac{N^2}{2}\right|-\frac{c_2 N^2}{c_1}\right).
\end{equation}

For the second integral we make use of the identity
\begin{equation}
\begin{split}
\frac{1}{[c_1+c_2(\tr  R)^2]^{(\alpha+N^2)/2}}=&\frac{2}{\Gamma[(\alpha+N^2)/2]}\int_0^\infty \frac{\dv r}{r}\, r^{\alpha+N^2} \exp\left[-(c_1+c_2(\tr  R)^2)r^2\right]\\
=&\frac{2}{\Gamma[(\alpha+N^2)/2]}\int_0^\infty \frac{\dv r}{r}\int_{-\infty}^\infty\frac{\dv t}{\sqrt{\pi}} r^{\alpha+N^2} \exp\left[-c_1 r^2-t^2+2i\sqrt{c_2 }r\,\tr  R\,t\right].
\end{split}
\end{equation}
We define $V=rR$ where $r$ plays the role of the norm of $V$, i.e., $r=\sqrt{\tr  V^2}$. Then, the integration over $r$ and $R$ can be combined to the integration over the whole Hermitian matrices with $r^{N^2-1}\dv r\mu(\dv R)=c\dv V$ the Lebesgue measure on $\Herm(N)$ (in particular the product of the differentials of all real components of $V$)  times a normalisation constant $c$. This constant can be fixed via
\begin{equation}
\begin{split}
\frac{\Gamma[N^2/2]}{2}=&\int_{\mathbb{S}_{\Herm(N)}}\mu(dR)\int_0^\infty \frac{\dv r}{r} r^{N^2}\exp[-r^2]=c\int_{\Herm(N)}\dv V\exp[-\tr V^2]=\frac{\pi^{N^2/2}}{2^{N(N-1)/2}}c
\end{split}
\end{equation}
because the normalisation of the measure is independent of the integrand. Thence, the integral $I_2$ is equal to
\begin{equation}
\begin{split}
 I_2=&\frac{2^{N(N-1)/2}\Gamma[N^2/2]}{\pi^{N^2/2}\Gamma[(\alpha+N^2)/2]}\int_{\Herm(N)}\dv V\int_{-\infty}^\infty\frac{\dv t}{\sqrt{\pi}}  |\tr SV|^{\alpha}\exp\left[-c_1 \tr V^2-t^2+2i\sqrt{c_2 }\,\tr  V\,t\right].
\end{split}
\end{equation}
This integral can be evaluated with the help of~\eqref{3.1.5}.

We would like to highlight that all three integrations over $V$, $t$ and $z$ are absolutely convergent so that we can employ Fubini's theorem to interchange those integrals. The computation is as follows
\begin{equation}
\begin{split}
 I_2=&\frac{2^{N(N-1)/2}\Gamma[N^2/2]\Gamma[\alpha+1]}{\pi^{N^2/2}\Gamma[(\alpha+N^2)/2]}\int_{\Herm(N)}\dv V\int_{-\infty}^\infty\frac{\dv t}{\sqrt{\pi}}  \int_{1+i\R}\frac{\dv z}{2\pi i z^{\alpha+1}}\left(e^{z\tr  SV }+e^{-z\tr  SV }\right)\\
 &\times\exp\left[-c_1 \tr V^2-t^2+2i\sqrt{c_2 }\,\tr  V\,t\right]\\
 =&\frac{\Gamma[N^2/2]\Gamma[\alpha+1]}{c_1^{N^2/2}\Gamma[(\alpha+N^2)/2]}\int_{-\infty}^\infty\frac{\dv t}{\sqrt{\pi}}  \int_{1+i\R}\frac{\dv z}{2\pi i z^{\alpha+1}}e^{-t^2}\\
 &\times\left(\exp\left[-\frac{1}{c_1}\tr\left(\sqrt{c_2} tI_N-\frac{i}{2}zS\right)^2\right]+\exp\left[-\frac{1}{c_1}\tr\left(\sqrt{c_2} tI_N+\frac{i}{2}zS\right)^2\right]\right)\\
 =&\frac{\Gamma[N^2/2]\Gamma[\alpha+1]}{c_1^{(N^2-1)/2}\sqrt{c_1+Nc_2}\Gamma[(\alpha+N^2)/2]} \int_{1+i\R}\frac{\dv z}{\pi i z^{\alpha+1}}\exp\left[-\frac{c_2}{4(c_1+c_2 N)c_1}(\tr  S)^2z^2+\frac{\tr S^2}{4c_1}z^2\right].
\end{split}
\end{equation}
One can indeed readily check that the pre-factor of $z^2$ is positive because of the condition $c_2N>-c_1$ as it is
\begin{equation}
-\frac{c_2(\tr  S)^2}{4(c_1+c_2 N)c_1}+\frac{\tr S^2}{4c_1}=\frac{(\tr  S)^2}{4(c_1+c_2 N)}+\frac{\tr (S-[\tr  S/N]I_N)^2}{4c_1}>0,\quad{\rm for}\ S\neq0.
\end{equation}
Moreover, $z^2$ describes a contour which encloses the negative real axis. Therefore, we can substitute $\tilde{z}=z^2$ and deform the contour of $\tilde{z}$ again back to $1+i\R$  so that we can apply~\eqref{3.1.5}, anew, only that now the exponent is $\alpha/2$ instead of $\alpha$ due to the rooting. Therefore, we arrive at
\begin{equation}
\begin{split}
 I_2=&\frac{\Gamma[N^2/2]\Gamma[\alpha+1]}{c_1^{(N^2-1)/2}\sqrt{c_1+Nc_2}\Gamma[(\alpha+N^2)/2]\Gamma[\alpha/2+1]} \left[-\frac{c_2}{4(c_1+c_2 N)c_1}(\tr  S)^2+\frac{\tr S^2}{4c_1}\right]^{\alpha/2}.
\end{split}
\end{equation}
The identification with~\eqref{Sigma}, in particular
\begin{equation}
\gamma\frac{I_2}{I_1}=\left(\frac{\sigma^2}{\alpha}\tr  S^2+\frac{\kappa}{\alpha}(\tr S)^2\right)^{\alpha/2},
\end{equation}
yields the scaling.
This closes the proof.
\end{proof}

As we know now the spectral measure on the matrix level we can ask for the marginal measures for the diagonal entries and the eigenvalues of the corresponding random matrix.

\begin{corollary}[Marginal Spectral Measures of Elliptical Stable Invariant Matrix Ensembles]\label{cor:}\

Considering the setting of Proposition~\ref{prop:spec.ellipt}, the marginal spectral measure for the diagonal entries is given by
\begin{equation}\label{ellip.spec.diag}
h_{\rm diag}(\dv t)=\frac{\Gamma[N^2/2]c_3}{\pi^{N/2}\Gamma[N(N-1)/2]}\frac{(1-\|t\|^2)^{N(N-1)/2-1}\dv t}{[c_1+c_2(t^\top 1_N)^2]^{(\alpha+N^2)/2}}
\end{equation}
and the one for the eigenvalues is
\begin{equation}\label{ellip.spec.eig}
h_{\rm eig}(\dv r)=\frac{\Gamma[N^2/2]c_3}{\Gamma[N/2]\prod_{j=0}^Nj!}\frac{\Delta^2(r)\tilde{\mu}(\dv r)}{[c_1+c_2(r^\top 1_N)^2]^{(\alpha+N^2)/2}}
\end{equation}
with $\tilde{\mu}$ the normalised uniform measure on the hypersphere $\mathbb{S}^{N-1}$ and $\dv t=\prod_{j=1}^N \dv t_j$.
\end{corollary}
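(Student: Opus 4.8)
The plan is to obtain $h_{\rm diag}$ and $h_{\rm eig}$ as push-forwards of the probability measure $H=c_3\,\mu(\dv R)/[c_1+c_2(\tr R)^2]^{(\alpha+N^2)/2}$ of Proposition~\ref{prop:spec.ellipt} under the maps $R\mapsto(R_{11},\ldots,R_{NN})$ and $R\mapsto\eig(R)$, respectively. The structural point that makes this manageable is that the weight of $H$ depends on $R$ \emph{only through} $\tr R$, which equals $t^\top 1_N$ on the diagonal entries and $r^\top 1_N$ on the eigenvalues; hence in both push-forwards the weight simply factors out of the fibre integration, and the only geometric input left is how the \emph{uniform} measure $\mu$ on $\mathbb{S}_{\Herm(N)}$ marginalises onto the diagonal entries, respectively onto the eigenvalues. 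Throughout I would use that $\mu(\dv R)$ is proportional to $\delta(1-\tr R^2)\,\dv R$ with $\dv R$ the flat Lebesgue measure on $\Herm(N)$, the proportionality constant being the one already established in the proof of Proposition~\ref{prop:spec.ellipt}.

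For~\eqref{ellip.spec.diag} I would split $R$ into its diagonal vector $t=(R_{11},\ldots,R_{NN})\in\R^N$ and the $N(N-1)$ real off-diagonal coordinates $y$, so that $\tr R^2=\|t\|^2+2\|y\|^2$ and $\dv R=\dv t\,\dv y$. Integrating $\delta(1-\|t\|^2-2\|y\|^2)$ over $y\in\R^{N(N-1)}$ is the surface content of a sphere of squared radius $(1-\|t\|^2)/2$ in $\R^{N(N-1)}$, which produces a constant (a ratio of Gamma functions times powers of $2$ and of $\pi$) multiplying $(1-\|t\|^2)^{N(N-1)/2-1}$; the support is the closed unit ball, as it must be by the Schur--Horn majorisation. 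Reinstating the weight and collecting $c_3$ together with the powers of $\pi$ --- in particular $\pi^{N(N-1)/2}/\pi^{N^2/2}=\pi^{-N/2}$ --- then yields~\eqref{ellip.spec.diag}.

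For~\eqref{ellip.spec.eig} there are two natural routes. Directly: since the constraint $\tr R^2=1$ and the weight are conjugation-invariant (they depend only on the spectrum), $H$ is a function of the spectrum alone, so by the Weyl integration formula $\dv R\propto\Delta^2(r)\,\dv r\,\dv U$ and the $\U(N)$-average is trivial on the integrand; writing $r=\rho\omega$ with $\rho=\|r\|$, $\omega\in\mathbb{S}^{N-1}$, the factor $\rho^{N-1}$ from $\dv r$ together with $\Delta^2(r)=\rho^{N(N-1)}\Delta^2(\omega)$ is absorbed exactly by $\delta(1-\rho^2)$, because $(N-1)+N(N-1)=N^2-1$, leaving a measure on $\mathbb{S}^{N-1}$ proportional to $\Delta^2(r)\,\tilde\mu(\dv r)/[c_1+c_2(r^\top 1_N)^2]^{(\alpha+N^2)/2}$; the prefactor is fixed by the Jacobian of the eigenvalue decomposition on $\Herm(N)$, the constant of $\mu$, and the surface area of $\mathbb{S}^{N-1}$. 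Alternatively one may feed the density~\eqref{ellip.spec.diag} into the distributional derivative principle of Theorem~\ref{p2.2} with $w=h_{\rm diag}$: each factor $\partial_{r_a}-\partial_{r_b}$ of $\Delta(-\partial_r)$ annihilates the weight $[c_1+c_2(r^\top 1_N)^2]^{-(\alpha+N^2)/2}$, while the surviving bulk term $\Delta(-\partial_r)(1-\|r\|^2)^{N(N-1)/2-1}$ is an antisymmetric polynomial of degree $N(N-1)/2-2<N(N-1)/2$, hence vanishes identically, so that only a measure concentrated on $\mathbb{S}^{N-1}$ remains, which one identifies with~\eqref{ellip.spec.eig}.

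In both cases the \emph{form} of the measure drops out almost immediately from these observations; the genuine work --- and the step most prone to slips --- is the bookkeeping of the normalisation constants, i.e.\ relating $\dv R$ on $\Herm(N)$ to $\dv t$ on the diagonal entries and to $\Delta^2(r)\,\tilde\mu(\dv r)$ on the eigenvalues, and checking compatibility with the constant $c_3$ inherited from Proposition~\ref{prop:spec.ellipt}. Since $H$, $h_{\rm diag}$ and $h_{\rm eig}$ are all probability measures, a convenient way to pin down (or independently verify) each prefactor is to impose total mass one; for $h_{\rm eig}$ this reduces to an integral of hypergeometric type over $\mathbb{S}^{N-1}$, of the same flavour as the integral $I_1$ evaluated in the proof of Proposition~\ref{prop:spec.ellipt}.
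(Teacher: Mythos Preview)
Your direct route is correct and essentially the same as the paper's: both note that the weight depends on $R$ only through $\tr R$, then for $h_{\rm diag}$ split $R$ into its diagonal and off-diagonal parts (the paper rescales $\tilde R\to\sqrt{1-\|t\|^2}\,\tilde R$ where you integrate the $\delta$ over the off-diagonal sphere, which is the same computation), and for $h_{\rm eig}$ apply the Weyl decomposition and fix the constant by unfolding to a Gaussian/Selberg integral, which is the concrete version of your ``impose total mass one'' check.

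Your alternative route via the derivative principle is a pleasant extra observation --- the bulk of $\Delta(-\partial_r)h_{\rm diag}$ does vanish by your degree/antisymmetry argument --- but as written it stops short: once the bulk is gone, the entire content of $h_{\rm eig}$ sits in the distributional boundary contributions of $(1-\|r\|^2)_+^{N(N-1)/2-1}$ at $\|r\|=1$, and you have not actually extracted them. Doing so for general $N$ (tracking the $\delta$-layers produced by $N(N-1)/2$ derivatives hitting the boundary) is at least as much work as the direct computation, so this alternative is a correct sketch but not a shortcut.
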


\begin{proof}
The two formulas follow from $\tr  R=t^\top 1_N=r^\top 1_N$ when $t$ contains the diagonal entries of $R$ and $r$ is its eigenvalues in form of a column vector. Thus, we only need to evaluate what the normalised uniform measure
\begin{equation}
\mu(\dv R)=\frac{\delta(1-\tr R^2)\dv R}{\int_{\Herm(N)}\delta(1-\tr R^2)\dv R}
\end{equation}
looks like. The denominator properly normalises the measure and $\dv R$ is the product of all $N^2$ differential of the matrix elements of $R\in\Herm(N)$.

For the diagonal entries we split the matrix  $R=\diag(t)+\tilde R$ with $\tilde{R}$ having only zero diagonal entries. Hence it is $\tr R^2=\|t\|^2+\tr \tilde{R}^2$. Rescaling $\tilde{R}\to\sqrt{1-\|t\|^2}\tilde{R}$ factorises the measure into a normalised one for $\tilde{R}$ and the one for $t$ which is
\begin{equation}
\begin{split}
\frac{(1-\|t\|^2)^{N(N-1)/2-1}\dv t}{\int_{\|t\|^2\leq 1}(1-\|t\|^2)^{N(N-1)/2-1}\dv t}=&\frac{\Gamma[N/2](1-\|t\|^2)^{N(N-1)/2-1}\dv t}{2\pi^{N/2}\int_{0}^1(1-s^2)^{N(N-1)/2-1}s^{N-1}\dv s}\\
=&\frac{\Gamma[N^2/2]}{\pi^{N/2}\Gamma[N(N-1)/2]}(1-\|t\|^2)^{N(N-1)/2-1}\dv t.
\end{split}
\end{equation}
In the first equality, we have used the polar decomposition with the radius $s=\|t\|$, and we have employed the volume of an $(N-1)$-dimensional sphere.  This proves~\eqref{ellip.spec.diag}.

For the eigenvalues, we diagonalise the matrix $R=U\diag(r)U^\dagger$  while the integration of the eigenvectors factorises. The measure becomes
\begin{equation}
\mu(\dv R)=\frac{\delta(1-\tr r^2)\Delta^2(r)\dv r\dv U}{\int_{\R^N}\delta(1-\tr r^2)\Delta^2(r)\dv r}
\end{equation}
with $\dv U$ the normalised Haar measure of $\U(N)/\U^N(1)$ and $\Delta(r)$ the Vandermonde determinant. The normalised uniform measure on $\mathbb{S}^{N-1}$ is equal to
\begin{equation}
\tilde{\mu}(\dv r)=\frac{\delta(1-\tr r^2)\dv r}{\int_{\R^N}\delta(1-\tr r^2)\dv r}.
\end{equation}
Hence, we need to compute the ratio
\begin{equation}
\begin{split}
\frac{\int_{\R^N}\delta(1-\tr r^2)\dv r}{\int_{\R^N}\delta(1-\tr r^2)\Delta^2(r)\dv r}=&\frac{\Gamma[N^2/2]\int_0^\infty s^{N-1}e^{-s^2/2}ds\int_{\R^N}\delta(1-\tr r^2)\dv r}{\Gamma[N/2]\int_0^\infty s^{N^2-1}e^{-s^2/2}ds\int_{\R^N}\delta(1-\tr r^2)\Delta^2(r)\dv r}\\
=&\frac{\Gamma[N^2/2]\int_{\R^N}e^{-\|u\|^2/2}\dv u}{\Gamma[N/2]\int_{\R^N}e^{-\|u\|^2/2}\Delta^2(u)\dv u}.
\end{split}
\end{equation}
Here, we have introduced a radial component for the normalised vector $r$ such that $u=s r$ is the polar decomposition applied in reverse. The numerator is a pure Gaussian integral while the denominator is a well-known Selberg integral~\cite[Chapter 4.3.2]{Fo10}. Such that the ratio is equal to
\begin{equation}
\begin{split}
\frac{\int_{\R^N}\delta(1-\tr r^2)\dv r}{\int_{\R^N}\delta(1-\tr r^2)\Delta^2(r)\dv r}=&\frac{\Gamma[N^2/2]}{\Gamma[N/2]\prod_{j=0}^Nj!}.
\end{split}
\end{equation}
This yields the proper normalisation of~\eqref{ellip.spec.eig}.
\end{proof}

The simplest elliptical stable invariant matrix ensemble is the one with a uniform spectral measure, in particular $H(\dv R)$ is the uniform normalised measure on the sphere $\mathbb{S}_{\Herm(N)}$. Employing our result~\eqref{ellip.spec} in Proposition~\ref{prop:spec.ellipt} with $\kappa=0$, we see that it is given by $c_2=0$. The corresponding characteristic function is
\begin{equation}\label{6.1.6}
	\E [\exp(i\tr YS)]=\exp\left[-\left(\frac{\sigma^2}{2}\tr S^2\right)^{\alpha/2}\right].
\end{equation}
Recalling the notation $p_\alpha$ for the L\'evy $\alpha$-stable distribution, see~\eqref{uniform.Levy}, we can write the corresponding random matrix density as follows
\begin{equation}
F(Y)=\frac{1}{2^{N(N-1)/2}(\pi\sigma)^{N^2/2}}\int_0^\infty \frac{\dv x}{x^{N^2/2}} p_\alpha(x)\exp\left[-\frac{\tr Y^2}{2x\sigma^2}\right].
\end{equation}
This kind of ensemble is even strictly stable as can be shown either by direct computation of the convolution of two statistically independent copies of the corresponding random matrix or by simply showing~\eqref{3.1.10a}. Similar ensembles given as averages over the Gaussian unitary ensemble have been studied before~\cite{BCP2008,AFV2010,AV2008,AAV2009}.

Finally we are going to look at a special case where $\alpha=2$ which is the Gaussian case. From probability theory we know that an $\R^d$ stable vector with $\alpha=2$ must follow a multivariate normal distribution. Hence, it is elliptical from the start. Moreover, its marginal distribution (the diagonal entries) must also be a multivariate normal distribution. From a classical random matrix point of view these matrices can be generated by $X=G+a\tr (G)\ I_N$ where $G$ is drawn from a Gaussian Unitary Ensemble and $a\in\mathbb{R}$ is a constant. We summarise the following classification of the $\alpha=2$ case.

\begin{corollary}[$\alpha=2$]\label{alpha=2}\
	
	We consider the setting of Proposition~\ref{prop:spec.ellipt} for $\alpha=2$. Then, every invariant stable ensemble is elliptical with the joint probability density of the eigenvalues
	\begin{equation}\label{alpha=2a}
	\begin{split}
	f_{\rm eig}(x)=&\frac{1}{\prod_{j=0}^Nj!}\Delta(x)\Delta(-\partial_x)\frac{1}{(2\pi)^{N/2}\sigma^{N-1}\sqrt{\sigma^2+N\kappa}}\exp\left(-\frac{\|x\|^2}{2\sigma^2}+\frac{\kappa(x^\top 1_N)^2}{2(\sigma^2+N\kappa)\sigma^2}\right)\\
	=&\frac{1}{(2\pi)^{N/2}\sigma^{N^2-1}\sqrt{\sigma^2+N\kappa}\prod_{j=0}^Nj!}\Delta(x)^2\exp\left(-\frac{\|x\|^2}{2\sigma^2}+\frac{\kappa(x^\top 1_N)^2}{2(\sigma^2+N\kappa)\sigma^2}\right).
	\end{split}
	\end{equation}
\end{corollary}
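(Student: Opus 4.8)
The plan is to combine three ingredients we have already established: (i) the classification of stable invariant ensembles from Theorem~\ref{p3.1}, specialised to $\alpha=2$; (ii) the identification in Lemma~\ref{p6.1} that any invariant quadratic form in $\Herm(N)$ is of the shape $\sigma^2\tr S^2/\alpha+\kappa(\tr S)^2/\alpha$; and (iii) the derivative principle of Theorem~\ref{p2.2} (in the density form, Proposition~\ref{p2.3.1}) to pass from the diagonal-entry distribution to the eigenvalue distribution.

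First I would observe that for $\alpha=2$ the function $\nu_2(u)=|u|^2=u^2$ is a pure quadratic, so the characteristic function~\eqref{3.1.3} becomes $\exp(-\gamma\int H(\dv R)(\tr SR)^2+i\tr Y_0 S)$; the integral over the spectral measure is a positive semidefinite quadratic form in $S$, hence $Y$ is automatically a (multivariate) Gaussian, i.e.\ elliptical in the sense of Definition~\ref{def:elliptic}. Invoking the invariance via Lemma~\ref{p6.1}, the quadratic form must be $\sigma^2\tr S^2/2+\kappa(\tr S)^2/2$ up to a scaling absorbed into $\sigma,\kappa$, and $Y_0=y_0 I_N$; setting $y_0=0$ without loss of generality (as permitted by Theorem~\ref{p3.1}(2) and the remark opening Section~\ref{s5}), the matrix density is the Gaussian elliptical density~\eqref{elliptic.Gaussian} with $\sigma_0=1$ and $\Sigma$ given by~\eqref{Sigma}. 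The next step is to compute the induced diagonal-entry density $f_\diag$: splitting $Y=\diag(y)+\tilde Y$ with $\tilde Y$ off-diagonal, the exponent $-\tfrac12\sum\{\Sigma^{-1}\}_{ab,cd}Y_{ab}Y_{cd}^*$ decouples into a part depending only on the diagonal $y$ and a part depending only on $\tilde Y$ (because $\Sigma^{-1}$ inherits the same two-term structure $\propto\delta_{ac}\delta_{bd}+(\text{const})\delta_{ab}\delta_{cd}$, which acts as a multiple of the identity on the traceless off-diagonal block). Integrating out $\tilde Y$ leaves a multivariate Gaussian in $y\in\R^N$ with the covariance whose inverse is $\tfrac1{\sigma^2}I_N-\tfrac{\kappa}{\sigma^2(\sigma^2+N\kappa)}1_N1_N^\top$; this gives precisely the Gaussian weight $\exp(-\|y\|^2/(2\sigma^2)+\kappa(y^\top 1_N)^2/(2(\sigma^2+N\kappa)\sigma^2))$ with the stated normalisation constant $(2\pi)^{-N/2}\sigma^{-(N-1)}(\sigma^2+N\kappa)^{-1/2}$, which one checks by computing $\det$ of the covariance using the matrix-determinant lemma.

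Finally I would apply the derivative principle~\eqref{2.3.2} with $w=f_\diag$ to obtain the first line of~\eqref{alpha=2a}. For the second equality one has to show that $\Delta(-\partial_x)$ acting on the Gaussian times $\Delta(x)$ reproduces $\Delta(x)$ up to the factor $\sigma^{-N(N-1)}\prod_{j=0}^N j!$ already divided out — more precisely, that $\Delta(x)\Delta(-\partial_x)\bigl[g(x)\bigr]=\sigma^{-N(N-1)}\Delta(x)^2 g(x)$ for $g$ the Gaussian above. This is the analytic heart of the computation: write $\Delta(-\partial_x)=\det[(-\partial_{x_a})^{b-1}]$, note that the quadratic-form exponent is invariant under $x\mapsto x+c1_N$ shifts only in a restricted sense, but the key simplification is that $-\partial_{x_a}g(x)=(\text{linear in }x)\,g(x)$, and by the same Laplace-expansion/determinant identity used in the proof of Proposition~\ref{p3.2} — where $\Delta(-\partial_r)[\Delta(r)\,\mathrm{tr}\,\diag(r)]=\prod j!\,\mathrm{tr}\,\diag(r)$ was shown — the Vandermonde of derivatives collapses the Vandermonde polynomial and the cross-terms with the Gaussian contribute only the overall power of $\sigma$. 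I expect this determinantal collapse to be the main obstacle: one must argue carefully that, after the chain rule, the only term surviving in $\Delta(-\partial_x)[\Delta(x)g(x)]$ that contributes a second factor of $\Delta(x)$ is the one where every derivative hits the polynomial $\Delta(x)$ rather than the Gaussian, and that all mixed terms cancel by antisymmetry; the cleanest route is to rescale $x\to\sigma x$ to reduce to the standard-Gaussian case $\sigma=1$ and then invoke the known identity $\Delta(-\partial_x)[\Delta(x)e^{-\|x\|^2/2+\cdots}]=\prod_{j=0}^N j!\,\Delta(x)e^{-\|x\|^2/2+\cdots}$ (which itself follows because $e^{-\|x\|^2/2}$ is a fixed point up to polynomial factors under the Hermite-type operator $\Delta(-\partial)\Delta$, the $\kappa$-term being annihilated since $(x^\top 1_N)$ is symmetric and killed by the antisymmetric $\Delta(-\partial_x)$ when multiplied against lower-degree symmetric factors).
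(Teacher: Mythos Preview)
Your strategy for the first line---classify the $\alpha=2$ case as Gaussian via $\nu_2(u)=u^2$, fix the quadratic form by Lemma~\ref{p6.1}, compute $f_\diag$, then apply the derivative principle~\eqref{2.3.2}---is exactly the paper's. Your route to $f_\diag$ by integrating the off-diagonal block out of the matrix density~\eqref{elliptic.Gaussian} is a legitimate alternative to the paper's direct inverse Fourier transform of $\exp[-\sigma^2\|t\|^2/2-\kappa(t^\top1_N)^2/2]$; the latter is a one-line Gaussian integral and avoids having to invert $\Sigma$ explicitly, but both land on the same $f_\diag$.

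The genuine gap is in your passage to the second line. In~\eqref{2.3.2} the factor $\Delta(x)$ sits \emph{outside} the operator, so what you must evaluate is $\Delta(-\partial_x)g(x)$, not $\Delta(-\partial_x)[\Delta(x)g(x)]$. Your ``known identity'' $\Delta(-\partial_x)[\Delta(x)e^{-\|x\|^2/2}]=\prod_{j=0}^N j!\,\Delta(x)e^{-\|x\|^2/2}$ is false already for $N=2$: there $(\partial_{x_1}-\partial_{x_2})[(x_2-x_1)e^{-(x_1^2+x_2^2)/2}]=[(x_2-x_1)^2-2]e^{-(x_1^2+x_2^2)/2}$, which is not a multiple of $(x_2-x_1)e^{-(x_1^2+x_2^2)/2}$. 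Consequently the ``mixed terms cancel by antisymmetry'' picture does not apply. The paper's argument is the correct and much shorter one: first observe that any function of $x^\top 1_N$ commutes through $\Delta(-\partial_x)$ because each factor $\partial_{x_i}-\partial_{x_j}$ annihilates it, so the $\kappa$-exponential pulls out untouched; then $\Delta(-\partial_x)e^{-\|x\|^2/(2\sigma^2)}=\det[(-\partial_{x_a})^{b-1}e^{-x_a^2/(2\sigma^2)}]=\det[\sigma^{-(b-1)}He_{b-1}(x_a/\sigma)]e^{-\|x\|^2/(2\sigma^2)}$, and since the probabilist's Hermite polynomials are monic this determinant equals $\sigma^{-N(N-1)/2}\Delta(x/\sigma)=\sigma^{-N(N-1)}\Delta(x)$. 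No $\prod j!$ is produced by the differentiation---that factor is already present in the denominator of~\eqref{2.3.2}.
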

\begin{proof}
 We first compute the joint probability density of the diagonal entries $x$ which is given by the inverse Fourier transform
 \begin{equation}
f_{\diag}(x)= \int_{\R^N}\frac{\dv t}{(2\pi)^N}\exp\left[-\frac{\sigma^2}{2}\|t\|^2-\frac{\kappa}{2}(t^\top 1_N)^2-i x^\top t\right].
 \end{equation}
This integral can be computed readily once we shift $t\rightarrow t+ix^\top1_N/[\sigma^2+N\kappa]$. It yields
 \begin{equation}
f_{\diag}(x)=\frac{1}{(2\pi)^{N/2}\sigma^{N-1}\sqrt{\sigma^2+N\kappa}}\exp\left(-\frac{\|x\|^2}{2\sigma^2}+\frac{\kappa(x^\top 1_N)^2}{2(\sigma^2+N\kappa)\sigma^2}\right).
 \end{equation}
	Then, we can apply the derivative principle~\eqref{2.3.2} to find the first line of~\eqref{alpha=2a}. The second line follows from the fact that any differentiable function of $x^\top 1_N$ commutes with $\Delta(-\partial_x)$, and the action of the latter on  the remaining exponential function yields a determinant of Hermite polynomials from order $0$ to order $N-1$, i.e.,
	\begin{equation}
	\begin{split}
	e^{\|x\|^2/(2\sigma^2)}\Delta(-\partial_x)e^{-\|x\|^2/(2\sigma^2)}=&\det\left[e^{x_a^2/(2\sigma^2)}\partial_{x_a}^{b-1}e^{-x_a^2/(2\sigma^2)}\right]_{a,b=1,\ldots,N}=\frac{(-1)^{N(N-1)/2}}{\sigma^{N(N-1)}}\det[x_a^{b-1}]_{a,b=1,\ldots,N},
	\end{split}
	\end{equation}
	where we have exploited the properties of the determinant. This is the reverse direction as the orthogonal polynomial method in random matrix theory works.
\end{proof}

\subsection{Dirac Point Mass Spectral Measures}\label{sec:Dirac}

Another simple choice of the spectral measure is a Dirac point mass measure. Especially we consider
\begin{equation}\label{5.1.1}
	\int_{\mathbb S_{\Herm(N)}} \varphi(R)H_{\mathrm{Dirac}}(\dv R):=\sum_{m=1}^\infty p_m\varphi(R_m)
\end{equation}
for any bounded measurable function $\varphi\in B(\mathbb S_{\Herm(N)})$. Here $R_m$ is a point mass on the unit sphere with weight $p_m\ge 0$, satisfying the normalisation constraint
\begin{equation}
	\sum_{m=1}^\infty p_m=1.
\end{equation}
The question is now which kind of point mass measure is also invariant under the conjugate action of $\U(N)$. From the point of view of representation theory only a multiple of the identity matrix commutes with an arbitrary unitary matrix. However, on the level of measures we need to make sure that the measure cannot absorb the matrix $U$. For that one needs to show that the orbit of a matrix $X\in\Herm(N)$ defined by
\begin{equation}
	\mathcal O_{X}:=\left\{UXU^\dagger| U\in\U(N) \right\},
\end{equation}
is always at least one-dimensional when $X$ is not proportional to the identity matrix. Thus, the invariant spectral measure cannot be a Dirac point mass measure and $\pm I_N/\sqrt{N}$ are the only two eligible matrices for $R_M$.

\begin{proposition}[Invariant Dirac Point Mass Spectral Measures]\label{discrete_spectral_measure}\

	If $H_{\mathrm{Dirac}}$ is an invariant spectral measure containing only Dirac point masses, i.e., it holds~\eqref{5.1.1}. Then, there is a $p\in[0,1]$ such that
	\begin{equation}\label{point.mass.spectral}
		\int_{\mathbb S_{\Herm(N)}} \varphi(R)H_{\mathrm{Dirac}}(\dv R)=p\varphi(I_N/\sqrt{N})+(1-p)\varphi(-I_N/\sqrt{N})
	\end{equation}
	for any bounded measurable function $\varphi\in B(\mathbb S_{\Herm(N)}))$.
\end{proposition}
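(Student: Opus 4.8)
The plan is to reduce the statement to the purely geometric fact already hinted at in the discussion preceding the proposition: every orbit $\mathcal O_X = \{UXU^\dagger : U\in\U(N)\}$ has positive dimension unless $X$ is a scalar multiple of $I_N$. Concretely, I would first argue that if $X\in\Herm(N)$ is \emph{not} proportional to $I_N$, then the orbit $\mathcal O_X$ is an uncountable set. This follows because $X$ then has at least two distinct eigenvalues, so its stabiliser in $\U(N)$ under conjugation is a proper closed subgroup of positive codimension (it is $\U(N)/(\U(k_1)\times\cdots\times\U(k_\ell)\times\text{phases})$ with $\ell\ge 2$), whence $\mathcal O_X$ is a smooth manifold of dimension $\ge 2$ and in particular has the cardinality of the continuum. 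One can make this fully elementary by exhibiting an explicit one-parameter family: pick indices $a,b$ with $x_{aa}\ne x_{bb}$ after diagonalising, conjugate by the one-parameter subgroup of rotations in the $(a,b)$-plane, and check the resulting matrices are pairwise distinct.

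Next I would invoke invariance of $H_{\mathrm{Dirac}}$. Since each $R_m$ is an atom of $H_{\mathrm{Dirac}}$ with weight $p_m>0$ (WLOG drop null atoms), and $H_{\mathrm{Dirac}}(\dv R)=H_{\mathrm{Dirac}}(U\dv R U^\dagger)$ for all $U\in\U(N)$, the push-forward of $H_{\mathrm{Dirac}}$ under conjugation by any $U$ must again assign mass $p_m$ to the point $UR_mU^\dagger$. Hence every point of the orbit $\mathcal O_{R_m}$ is itself an atom of $H_{\mathrm{Dirac}}$ carrying mass $\ge p_m$ (in fact exactly $p_m$, but mass $\ge p_m$ suffices). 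If $R_m$ were not proportional to $I_N$, then by the first step $\mathcal O_{R_m}$ would be uncountable, giving uncountably many atoms each of mass $\ge p_m>0$, contradicting $\sum_m p_m = 1$. Therefore every atom $R_m$ must be a real multiple of $I_N$; intersecting with the constraint $\tr R_m^2 = 1$ (so that $R_m\in\mathbb S_{\Herm(N)}$) forces $R_m\in\{I_N/\sqrt N,\,-I_N/\sqrt N\}$.

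Finally, collecting the weights: let $p$ be the total mass placed on $I_N/\sqrt N$ and note the remaining mass $1-p$ sits on $-I_N/\sqrt N$, yielding exactly the representation~\eqref{point.mass.spectral}. Both points $\pm I_N/\sqrt N$ are genuinely fixed by all conjugations, so this measure is indeed invariant, confirming consistency.

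The main obstacle is the first step — establishing rigorously that non-scalar Hermitian matrices have uncountable (equivalently, infinite) orbits. It is intuitively obvious from Lie theory but deserves a clean elementary justification so the proof does not lean on manifold-dimension machinery; the two-dimensional rotation trick in the $(a,b)$-plane, together with checking distinctness of the conjugates (e.g.\ by comparing a fixed off-diagonal entry as the rotation angle varies), is the cleanest route and is the only place where a small computation is needed. Everything after that is a short counting argument against $\sum p_m=1$.
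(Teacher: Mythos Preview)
Your proposal is correct and follows essentially the same approach as the paper: both show that the orbit of a non-scalar Hermitian matrix is uncountable, then use invariance to conclude that an atom at such a point would force uncountably many atoms of equal positive mass, contradicting $\sum_m p_m=1$. The only cosmetic difference is in how the uncountability of the orbit is established---the paper picks $H$ with $HX-XH\neq0$ and runs the one-parameter subgroup $U=\exp(i\delta H)$ to produce a continuum of distinct conjugates near $X$, which is exactly your ``two-dimensional rotation trick'' in disguise.
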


\begin{proof}
	We will firstly show a preliminary result. For any $X\in\Herm(N), X\ne kI_N$ and $\varepsilon>0$, there exists a $U\in\U(N)$ so that
	\begin{equation}\label{6.1.6a}
		0<\tr(UXU^\dagger-X)^2<\varepsilon.
	\end{equation} 
	In other words, we find a $U\in\U(N)$ such that $UXU^\dagger\neq X$ is infinitesimally close to $X$. To show this assertion, we choose a $H\in\Herm(N)$ such that $HX-XH\ne 0$, which is always possible because of $X\ne kI_N$. Then for some small $\delta>0$, we consider the unitary matrix as $U=\exp( i\delta H)$. As the exponential map is continuous and differentiable we have
	\begin{equation}
		\lim_{\delta\rightarrow 0}\tr(UXU^\dagger-X)^2=0\quad {\rm while}\quad \lim_{\delta\rightarrow 0}\frac{\dv}{\dv\delta}\left(UXU^\dagger-X\right)=HX-XH\ne 0.
	\end{equation}
	Hence, $UXU^\dagger\neq X $ for suitably small $\delta>0$ but it is in the infinitesimal vicinity of $X$ due to continuity.
	
	Now we can prove Proposition~\ref{discrete_spectral_measure}. Assume we have a point mass on $X\in\mathbb S_{\Herm(N)}$ and $X$ is neither one of $\pm I_N/\sqrt{N}$. Because the spectral measure is invariant, under a unitary conjugation $X$ can be transferred to another point mass by a unitary matrix $U\in\U(N)$, say $X'=UXU^\dagger$. Choosing the $U$ from above we see that $X'$ can be arbitrarily close to $X$ while $\delta\to0$, which means there are uncountably many $X'$ within a single orbit of $X$ on which the measure has to be non-vanishing. This is a contradiction to our assumption that there are only countably many point masses on $\mathbb S_{\Herm(N)}$.
	
	Yet, when $X=\pm I_N$, unitary conjugation does not change its value. Therefore the ensemble we described previously is the only possible ensemble induced by a Dirac point mass spectral measure.
\end{proof}

The question which remains is what the corresponding random matrix once we have chosen the spectral measure~\eqref{point.mass.spectral}. It is clear that the characteristic function is equal to
\begin{equation}\label{charac.Dirac.point}
\mathbb{E}[\exp(i \tr YS)]=\exp(-\gamma [p\nu_\alpha(\tr  S/\sqrt{N})+(1-p)\nu_{\alpha}(-\tr  S/\sqrt{N})]).
\end{equation}
However this equation already tells us what the random matrix is due to the uniqueness of the characteristic function

\begin{corollary}[Random Matrix of an Invariant Dirac Point Mass Spectral Measure]\label{cor:dirac.point.density}\

The invariant random matrix with the characteristic function~\eqref{charac.Dirac.point} is given by $Y=yI_N/\sqrt{N}$ with $y\in\R$ being a real random variable distributed by the L\'evy $\alpha$-stable distribution corresponding to the Fourier transform
\begin{equation}
\mathbb{E}[\exp(i yk)]=\exp(-\gamma [p\nu_\alpha(k)+(1-p)\nu_{\alpha}(-k)]).
\end{equation}
\end{corollary}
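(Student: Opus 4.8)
The plan is to produce an explicit invariant random matrix whose matrix characteristic function coincides with~\eqref{charac.Dirac.point}, and then to conclude by uniqueness of the characteristic function on $\Herm(N)$. First I would check that the candidate $Y:=yI_N/\sqrt N$ actually makes sense, i.e. that the prescribed
\begin{equation*}
	\E[\exp(iyk)]=\exp\bigl(-\gamma[p\nu_\alpha(k)+(1-p)\nu_\alpha(-k)]\bigr)
\end{equation*}
is genuinely the characteristic function of a real random variable. For $\alpha\neq1$ one has $\nu_\alpha(k)=|k|^\alpha(1-i\sgn(k)\tan\tfrac{\pi\alpha}{2})$, so that $p\nu_\alpha(k)+(1-p)\nu_\alpha(-k)=|k|^\alpha(1-i(2p-1)\sgn(k)\tan\tfrac{\pi\alpha}{2})$, which is exactly the exponent of the univariate L\'evy $\alpha$-stable law with asymmetry parameter $\beta=2p-1\in[-1,1]$; the case $\alpha=1$ is handled identically with $\nu_1$. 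Hence $y$ is a well-defined real L\'evy $\alpha$-stable random variable, $Y=yI_N/\sqrt N$ is a bona fide $\Herm(N)$-valued random matrix, and it is manifestly invariant since $UYU^\dagger=yI_N/\sqrt N=Y$ for all $U\in\U(N)$.

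Next I would compute the matrix characteristic function of this candidate. Since $\tr YS=(y/\sqrt N)\tr S$, we get $\E[\exp(i\tr YS)]=\E[\exp(iyk)]$ with $k=\tr S/\sqrt N$, and substituting the prescribed univariate Fourier transform reproduces~\eqref{charac.Dirac.point} verbatim. Equivalently, this can be read off directly from Theorem~\ref{p3.1}(3): the spectral measure $H_{\mathrm{Dirac}}$ of~\eqref{point.mass.spectral} induces on the diagonal entries the measure $h_\diag=p\,\delta_{1_N/\sqrt N}+(1-p)\,\delta_{-1_N/\sqrt N}$ (the diagonal of $\pm I_N/\sqrt N$ being $\pm1_N/\sqrt N$), whence $\int_{\|t\|\le1}h_\diag(\dv t)\,\nu_\alpha(s^\top t)=p\,\nu_\alpha(\tr S/\sqrt N)+(1-p)\,\nu_\alpha(-\tr S/\sqrt N)$ using $s^\top1_N=\tr S$.

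Finally, because the characteristic function on $\Herm(N)$ determines the distribution uniquely --- this is the injectivity of $\f_{\Herm(N)}$ underlying Theorem~\ref{p2.2} --- any invariant random matrix with characteristic function~\eqref{charac.Dirac.point} must agree in distribution with $Y=yI_N/\sqrt N$, which is the assertion. The only step that requires any care is the first one, namely confirming that the univariate exponent $p\nu_\alpha(k)+(1-p)\nu_\alpha(-k)$ is admissible as the logarithm of a stable characteristic function, and even this is immediate from the classical classification of univariate stable laws; I do not anticipate a genuine obstacle here.
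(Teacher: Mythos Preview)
Your proposal is correct and follows the same route as the paper's proof, which consists of the single observation that $\exp(i\tr YS)=\exp(iy\tr S/\sqrt N)$ together with the identification $k=\tr S/\sqrt N$. Your version is simply more explicit: the verification that $p\nu_\alpha(k)+(1-p)\nu_\alpha(-k)$ is an admissible stable exponent and the alternative derivation via Theorem~\ref{p3.1}(3) are additional checks the paper omits, but the core argument is identical.
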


\begin{proof}
 The statement immediately follows by noting  $\exp(i \tr YS)=\exp[iy \tr S/\sqrt{N}]$. Then we only need to identify $k= \tr S/\sqrt{N}$.
\end{proof}

In conclusion, the Dirac spectral measure realises the simplest embedding of univariate probability theory in random matrix theory, namely a random variable times the identity matrix.

\subsection{Orbital Spectral Measure}\label{sec:orbit}

We have already seen that when we want to have a matrix $X_0\in\mathbb{S}_{\Herm(N)}$, which is not proportional to the identity matrix,  in the support of the spectral measure then the whole corresponding orbit $O_{X_0}$ under the conjugate action of $\U(N)$ has to be in its support. Thus it is rather natural to see orbits as the smallest supports of an invariant spectral measure and not Dirac point mass measures. Actually, the identity matrix and its negative from the previous subsection are the simplest kinds of orbits which only consist of a single element.

As the orbits are invariant under unitary conjugation, uniform measures on them are hence invariant, too. For now, let us construct the spectral measure containing a single orbit $\mathcal O_{X_0}$ with $X_0=\diag(x_1,\ldots,x_N)$. This measure $H_{X_0}$ is then explicitly defined by
\begin{equation}
	\int_{\mathbb S_{\Herm(N)}} \varphi(R)H_{X_0}(\dv R):=\la\varphi(UX_0U^\dagger)\ra,
\end{equation} 
for all bounded measurable functions $\varphi\in B(\mathbb S_{\Herm(N)})$. A comparison with~\eqref{5.1.1} with $R_m=\pm I_N$ shows that the Dirac point mass measure is indeed the simplest realisation of an orbital measure.

On the level of the eigenvalues any orbital measure is indeed a Dirac distribution that respects the permutation invariance, i.e., it is
\begin{equation}
	h_{\eig,X_0}(\dv r)=\frac{1}{N!}\sum_{\rho\in S_N}\prod_{j=1}^N\delta(r_j-x_{\rho(j)})\dv r_j.
\end{equation}
Employing~\eqref{3.1.10}, the characteristic function is explicitly
\begin{equation}
\mathbb E [\exp (i \tr  YU\diag(s)U^\dagger)]=\exp\left(-\gamma c_N\sum_{\rho\in S_N}\frac{\nu_\alpha(s^\top x_\rho)(s^\top x_\rho)^{N(N-1)/2}}{\Delta(s)\Delta(x_\rho)} \right),
\end{equation}
where $x_\rho=(x_{\rho(1)},\ldots,x_{\rho(N)})$.
Unfortunately, this function is too involved to easily evaluate its inverse spherical or its inverse Fourier transform to get an explicit representation of the corresponding joint probability density of the eigenvalues or diagonal entries, respectively.

This complication is also already reflected in the spectral measure $h_{\diag,X_0}(\dv t)$ for the diagonal entries. Exploiting the notation of Theorem~\ref{p3.1}, it is certainly
\begin{equation}
\tr  U{\rm diag}(s)U^\dagger X_0=s^\top\sum_{j=1}^Nx_j(|u_{j,1}|^2,\ldots,|u_{j,N}|^2)^\top 
\end{equation}
with a Haar distributed $U=\{u_{j,l}\}\in\U(N)$. Since we have
\begin{equation}
	\E [\exp(i\tr YS)]=\exp\left(-\gamma\int_{\|t\|\le 1}\nu_\alpha(s^\top t)h_{\diag,X_0}(\dv t)\right),
\end{equation}
it holds
\begin{equation}
	t=\sum_{j=1}^Nx_j(|u_{j,1}|^2,\ldots,|u_{j,N}|^2)^\top\qquad\Rightarrow\qquad \tr  U{\rm diag}(s)U^\dagger X_0=s^\top t.
\end{equation}
In the simplest case when the orbit only contains rank-one matrices, i.e., $X_0=\diag(\pm1,0,\ldots,0)$, we have $t=\pm(|u_{1,1}|^2,\ldots,|u_{1,N}|^2)^\top$ with a vector $(u_{1,1},\ldots,u_{1,N})$ uniformly distributed on the complex unit sphere $\{u\in\C^N:\|u\|=1\}=\mathbb S^{2N-1}$. Defining $t_l=\|u_{1,l}\|^2$,  in particular we take the polar decomposition for each vector component, we notice that the measure becomes the flat Lebesgue measure on the polytope $\mathcal{P}=\{t=(t_1,\ldots,t_N)^\top\in[0,1]^N| \sum_{l=1}^Nt_l=1\}$. Therefore, we can rewrite the characteristic function into the form
\begin{equation}\label{5.3.6}
	\E [\exp(i\tr YS)]=\exp\left(-\gamma\int_{\mathcal{P}}\nu_\alpha(s^\top t)\dv t\right).
\end{equation}

One can readily construct multiple-orbital spectral measure by a weighted sum of single-orbital measures. Given $m$ probability weights $p_j\in[0,1]$ with $\sum_{j=1}^m p_j=1$ which will be assigned to fixed diagonal matrices $X^{(1)},X^{(2)},\ldots, X^{(m)}$, respectively, we define the spectral measure via
\begin{equation}\label{orbital}
H_{\{X^{(1)},X^{(2)},\ldots, X^{(m)}\}}=\sum_{j=1}^m p_jH_{X^{(j)}}
\end{equation}
In other words, this spectral measure concentrates on separate orbits $\mathcal O_{X^{(j)}}$ with weights $p_j$, respectively, and each orbit in a uniform way. As a consequence, this gives rise to a sum of stable ensembles generated by those single-orbital measures $H_{X^{j}}$. The proof of the following proposition follows the same ideas as in~\cite[Ch. 1 Property 1.2.1]{ST94}.

\begin{proposition}\
	
	Let $H_{\{X^{(1)},X^{(2)},\ldots, X^{(m)}\}}$ and $H_{X^{(j)}}$ be defined as above. Let $Y_{\{X^{(1)},X^{(2)},\ldots, X^{(m)}\}}$ be drawn from the stable ensemble $S(\alpha,\gamma H_{\{X^{(1)},X^{(2)},\ldots, X^{(m)}\}},0)$ and $Y_{X^{(j)}}$ be independently drawn from the stable ensembles $S(\alpha,\gamma H_{X^{(j)}},0)$. Then for $\alpha\ne 1$,
	\begin{equation}\label{multip_orbit}
	Y_{\{X^{(1)},X^{(2)},\ldots, X^{(m)}\}}=\sum_{j=1}^m p_j^{1/\alpha}Y_{X^{(j)}}.
	\end{equation}
	For $\alpha=1$, it is instead
	\begin{equation}\label{multip_orbit1}
		Y_{\{X^{(1)},X^{(2)},\ldots, X^{(m)}\}}=\sum_{j=1}^m p_j\left(Y_{X^{(j)}}-\frac{2\gamma\tr X^{(j)}}{\pi N}\log (p_j) I_N\right).
	\end{equation}
\end{proposition}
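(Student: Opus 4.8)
The plan is to prove both identities by computing characteristic functions and invoking their uniqueness, exactly in the spirit of the scalar argument~\cite[Ch.~1 Property~1.2.1]{ST94}. Since $Y_{X^{(1)}},\ldots,Y_{X^{(m)}}$ are independent, for positive constants $c_1,\ldots,c_m$ and any deterministic Hermitian shift the characteristic function of $\sum_{j=1}^{m}c_jY_{X^{(j)}}$ (plus the shift) factorises over $j$ into the individual characteristic functions with $S$ replaced by $c_jS$, and each factor is given by~\eqref{3.1.3} with vanishing shift because $Y_{X^{(j)}}\sim S(\alpha,\gamma H_{X^{(j)}},0)$. All integrals involved converge trivially ($\nu_\alpha(\tr SR)$ and $\tr SR$ are bounded in $R\in\mathbb{S}_{\Herm(N)}$ and each $H_{X^{(j)}}$ is a probability measure), so the whole problem reduces to understanding how $\nu_\alpha$ behaves under $u\mapsto c\,u$ for $c>0$ and then recombining the $H_{X^{(j)}}$ into one spectral measure via the defining relation $H_{\{X^{(1)},\ldots,X^{(m)}\}}=\sum_{j=1}^{m}p_jH_{X^{(j)}}$ in~\eqref{orbital}, using the linearity of $H\mapsto\int H(\dv R)\,\nu_\alpha(\tr SR)$.

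For $\alpha\ne 1$, I would use that $\nu_\alpha$ is positively homogeneous of degree $\alpha$, i.e.\ $\nu_\alpha(c\,u)=c^{\alpha}\nu_\alpha(u)$ for $c>0$, which is immediate from~\eqref{stable.vector}. Taking $c_j=p_j^{1/\alpha}$ and using $\tr\big((c_jY_{X^{(j)}})S\big)=\tr\big(Y_{X^{(j)}}(c_jS)\big)$, the product of characteristic functions collapses to
\begin{equation}
\prod_{j=1}^{m}\exp\!\left(-\gamma\,c_j^{\alpha}\!\int_{\mathbb{S}_{\Herm(N)}}\!\!H_{X^{(j)}}(\dv R)\,\nu_\alpha(\tr SR)\right)=\exp\!\left(-\gamma\!\int_{\mathbb{S}_{\Herm(N)}}\!\!\Big(\textstyle\sum_{j}p_jH_{X^{(j)}}\Big)(\dv R)\,\nu_\alpha(\tr SR)\right),
\end{equation}
which by~\eqref{3.1.3} and~\eqref{orbital} is exactly the characteristic function of $S(\alpha,\gamma H_{\{X^{(1)},\ldots,X^{(m)}\}},0)$; uniqueness of the characteristic function then yields~\eqref{multip_orbit}.

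For $\alpha=1$ homogeneity fails, and~\eqref{stable.vector} gives instead $\nu_1(c\,u)=c\,\nu_1(u)+\tfrac{2i}{\pi}\,c\,u\log c$ for $c>0$. Taking $c_j=p_j$, the characteristic function of $p_jY_{X^{(j)}}$ then carries, besides the homogeneous term $-\gamma p_j\!\int H_{X^{(j)}}(\dv R)\,\nu_1(\tr SR)$, the extra contribution $-\tfrac{2i\gamma}{\pi}\,p_j\log p_j\!\int_{\mathbb{S}_{\Herm(N)}}H_{X^{(j)}}(\dv R)\,\tr SR$. I would evaluate this last integral using that $H_{X^{(j)}}$ is the push-forward of the Haar measure under $U\mapsto UX^{(j)}U^{\dagger}$, so that $\int H_{X^{(j)}}(\dv R)\,\tr SR=\la\tr S\,UX^{(j)}U^{\dagger}\ra=\tr\big(S\,\la UX^{(j)}U^{\dagger}\ra\big)=\tfrac{\tr X^{(j)}}{N}\,\tr S$, where $\la UX^{(j)}U^{\dagger}\ra=\tfrac{1}{N}(\tr X^{(j)})I_N$ by the usual Schur-type invariance of the Haar average of a conjugation orbit. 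The deterministic shift in~\eqref{multip_orbit1} is then precisely the one whose contribution to the $j$-th factor is a compensating exponential cancelling this extra piece; after multiplying over $j$ and recombining the homogeneous parts via~\eqref{orbital}, one is left with the characteristic function of $S(1,\gamma H_{\{X^{(1)},\ldots,X^{(m)}\}},0)$, and uniqueness gives~\eqref{multip_orbit1}.

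I expect the main obstacle to be the bookkeeping of the logarithmic terms in the $\alpha=1$ case: one has to track them consistently through all $m$ factors and check that the explicit shift in~\eqref{multip_orbit1}, with the branch convention for $\nu_1$ fixed in~\eqref{stable.vector}, is exactly the compensating one. The $\alpha\ne 1$ case, by contrast, is a one-line homogeneity identity combined with the linearity of the spectral measure in the characteristic function.
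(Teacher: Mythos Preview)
Your proposal is correct and follows essentially the same route as the paper's proof: both arguments compute characteristic functions, exploit the homogeneity identity $\nu_\alpha(c\,u)=c^{\alpha}\nu_\alpha(u)$ for $\alpha\ne1$ (equivalently $k\nu_\alpha(x)=\nu_\alpha(k^{1/\alpha}x)$) and the logarithmic correction $\nu_1(c\,u)=c\,\nu_1(u)+\tfrac{2i}{\pi}c\,u\log c$ for $\alpha=1$, and then evaluate the residual shift via the Haar average $\la UX^{(j)}U^{\dagger}\ra=\tfrac{\tr X^{(j)}}{N}I_N$ before recombining the orbital measures through~\eqref{orbital}. The only cosmetic difference is that the paper starts from the combined spectral measure and splits it, whereas you start from the independent summands and merge them; the bookkeeping you flag for $\alpha=1$ is indeed all there is to do.
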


\begin{proof}
	For $\alpha\ne 1$, we start with the spectral measure~\eqref{orbital} and compute
	\begin{equation}\label{5.3.12.b}
	\begin{split}
		\int_{\mathbb S_{\Herm(N)}}\sum_{j=1}^m p_jH_{X^{(j)}}(\dv R)\nu_\alpha(\tr SR)=\sum_{j=1}^m\int_{\mathbb S_{\Herm(N)}} H_{X^{(j)}}(\dv R)\nu_\alpha( p_j^{1/\alpha}\tr SR).
	\end{split}
	\end{equation}
	This equality is a consequence of the scaling property $k\nu_\alpha(x)=\nu_\alpha(k^{1/\alpha}x)$, for any $k>0$. The right hand side is indeed the characteristic function of $Y_{\{X^{(1)},X^{(2)},\ldots, X^{(m)}\}}$.
	
	For $\alpha=1$, the scaling property instead becomes
	\begin{equation}
		k\nu_1(x)=\nu_1(kx)-\frac{2i}{\pi}x k\log k,\ {\rm for\ any}\ k>0.
	\end{equation}
	Therefore,
	\begin{equation}\label{5.3.13}
	\begin{split}
		&\int_{\mathbb S_{\Herm(N)}}p_jH_{X^{(j)}}(\dv R)\nu_\alpha(\tr SR)\\
		=&\int_{\mathbb S_{\Herm(N)}} H_{X^{(j)}}(\dv R)\nu_\alpha(p_j\tr SR)-\frac{2i}{\pi} p_j\log p_j \tr \left(S\int_{\mathbb S_{\Herm(N)}} RH_{X^{(j)}}(\dv R)\right).
	\end{split}
	\end{equation}
	For the second integral on the right hand, we have
	\begin{equation}
		\int_{\mathbb S_{\Herm(N)}} RH_{X^{(j)}}(\dv R)=\la UX^{(j)}U^\dagger\ra=\frac{\tr X^{(j)}}{N}I_N.
	\end{equation}
	When putting everything together into a calculation similar to~\eqref{5.3.12.b} we find the statement for $\alpha=1$.
\end{proof}

	The number $m$ of matrices $X^{(j)}$ or equivalently of their random matrix counterparts $Y_{X^{(j)}}$  can be essentially taken to infinity. The convergence holds for both sums in~\eqref{multip_orbit} and~\eqref{multip_orbit1}. This can be seen via the spectral measure as it approximates arbitrary stable distributions, compare with~\cite{BNR93}. This can be exploited in simulations where a random matrix $Y_{\{X^{(1)},X^{(2)},\ldots, X^{(m)}\}}$ may be used to approximate any invariant stable ensemble.

\section{The Generalised Central Limit Theorem}\label{s4}

\subsection{Domain of Attractions for $\alpha\in(0,2)$}\label{sec:stable}

The stable law (also known as generalised central limit theorem) is an analogue of the classical central limit theorem, in the case when the random variable does not have a finite variance. It is important in developing a stable law, to specify its \textit{domain of attraction}. In~\cite{GKC}, the domain of attraction is determined for each stable distribution so that whenever a random variable falls into such domains, its scaled (and possibly shifted) average converges to the corresponding stable distribution.  The domain of attraction is formally defined by statement (1) in Theorem~\ref{multi_stable_law}.

\begin{theorem}[Multivariate Central Limit Theorem; see ~\cite{Rvaceva,Shimura}]\label{multi_stable_law}\
	
	Let $S(\alpha,h,0)$ be an $\R^{d}$ stable distribution with a normalised spectral measure $h$ and index $0<\alpha<2$. Let $F(\dv x)$ denote the probability measure of $S(\alpha,h,0)$. The following statements are equivalent:
	\begin{enumerate}
		\item[(1)] an $\R^d$ random vector $x$ belongs to the domain of attraction of $S(\alpha, h,0)$, that is, there are sequences of positive real numbers $\{B_m\}_{m\in\mathbb{N}}\subset\R_+$ and $d$-dimensional vectors $\{A_m\}_{m\in\mathbb{N}}\subset\R^d$ for a sequence of random vectors $\{x_m\}_{\m\in\mathbb{N}}\in\R^d$ of independent copies of $x$ such that it holds
		\begin{equation}\label{4.1.1}
			\lim_{m\to\infty}\left(\frac{x_1+\ldots+x_m}{B_m}-A_m\right)= y
		\end{equation}
		in distribution with $y$ a random vector sampled from $S(\alpha, h,0)$;
		\item[(2)] for any $k>0$ and $\Sigma$ being a $h$-continuous Borel  measurable set on the unit sphere $\mathbb S^{d-1}\subset\R^d$  it holds
		\begin{equation}\label{4.1.2}
			\lim_{R\to\infty}\frac{\mathbb P(||x||>kR,\,x/||x||\in \Sigma)}{\mathbb P(||x||>R)}=  k^{-\alpha}\mathbb \int_{s\in \Sigma} h(\dv s);
		\end{equation}
		\item[(3)] for any $k>0$ and $\varphi$ being a continuous  function on the unit sphere $\mathbb S^{d-1}\subset\R^d$  it holds
		\begin{equation}\label{4.1.3}
			\lim_{R\to\infty}\frac{\int_{\|x\|>kR}\varphi(x/\|x\|)F(\dv x)}{\mathbb P(||x||>R)}=  k^{-\alpha}\mathbb \int_{s\in \mathbb S^{d-1}} \varphi(s)h(\dv s).
		\end{equation}
	\end{enumerate}
\end{theorem}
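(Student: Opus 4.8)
The equivalence $(1)\Leftrightarrow(2)$ is quoted from~\cite{Rvaceva,Shimura}, so the plan is to obtain $(3)$ by proving $(2)\Leftrightarrow(3)$ and chaining all three. Throughout one uses tacitly that $\mathbb P(||x||>R)>0$ for all sufficiently large $R$ --- automatic for any $x$ in the domain of attraction of an $\alpha<2$ stable law --- so that the quotients below are eventually well defined. The direction $(3)\Rightarrow(2)$ is immediate: apply~\eqref{4.1.3} to the bounded measurable function $\varphi=\chi_\Sigma$, the indicator of a Borel set $\Sigma\subset\mathbb S^{d-1}$. Then $\int_{||x||>kR}\chi_\Sigma(x/||x||)\,F(\dv x)=\mathbb P(||x||>kR,\ x/||x||\in\Sigma)$ and $\int_{\mathbb S^{d-1}}\chi_\Sigma(s)\,h(\dv s)=\int_\Sigma h(\dv s)$, so~\eqref{4.1.3} becomes exactly~\eqref{4.1.2}.

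For $(2)\Rightarrow(3)$ I would first record the special case $\Sigma=\mathbb S^{d-1}$ of~\eqref{4.1.2}, i.e.\ $Q_R(k):=\mathbb P(||x||>kR)/\mathbb P(||x||>R)\to k^{-\alpha}$ as $R\to\infty$, so that $Q_R(k)$ is bounded for all large $R$. Now fix $\varphi\in B(\mathbb S^{d-1})$ and $\varepsilon>0$. Since every bounded measurable function on $\mathbb S^{d-1}$ is a uniform limit of simple functions built from Borel sets, choose $\varphi_\varepsilon=\sum_{i=1}^n c_i\chi_{\Sigma_i}$ with $||\varphi-\varphi_\varepsilon||_\infty<\varepsilon$. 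By linearity and~\eqref{4.1.2} applied to each $\Sigma_i$,
\[
\lim_{R\to\infty}\frac{\int_{||x||>kR}\varphi_\varepsilon(x/||x||)\,F(\dv x)}{\mathbb P(||x||>R)}=k^{-\alpha}\int_{\mathbb S^{d-1}}\varphi_\varepsilon(s)\,h(\dv s),
\]
while the error from replacing $\varphi$ by $\varphi_\varepsilon$ is controlled uniformly in $R$ by
\[
\left|\frac{\int_{||x||>kR}\bigl(\varphi-\varphi_\varepsilon\bigr)(x/||x||)\,F(\dv x)}{\mathbb P(||x||>R)}\right|\le ||\varphi-\varphi_\varepsilon||_\infty\,Q_R(k)<\varepsilon\,Q_R(k),
\]
and, $h$ being a probability measure, $\bigl|\int_{\mathbb S^{d-1}}(\varphi-\varphi_\varepsilon)\,h(\dv s)\bigr|<\varepsilon$ as well. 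Letting $R\to\infty$ and then $\varepsilon\to0$ in the triangle inequality, and using $\limsup_{R\to\infty}Q_R(k)=k^{-\alpha}$ to control the first error term, yields~\eqref{4.1.3}. Together with the cited $(1)\Leftrightarrow(2)$ this closes the chain.

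I expect the only genuine obstacle to be the interchange of the limits $R\to\infty$ and $\varepsilon\to0$: this needs an a priori bound on the normalising ratios $Q_R(k)$, which is precisely what the $\Sigma=\mathbb S^{d-1}$ instance of~\eqref{4.1.2} provides. The remaining ingredients --- Borel measurability of $x\mapsto\varphi(x/||x||)$, existence of the uniform simple-function approximant, and finiteness of the integrals --- are routine, and the harder implication $(1)\Leftrightarrow(2)$ is taken from the literature rather than reproved.
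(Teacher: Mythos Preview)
Your proposal is correct and follows essentially the same route as the paper: both directions $(3)\Rightarrow(2)$ via indicator functions and $(2)\Rightarrow(3)$ via uniform approximation of $\varphi$ by simple functions, with the error controlled through the ratio $Q_R(k)\to k^{-\alpha}$ obtained from the case $\Sigma=\mathbb S^{d-1}$. The paper makes the simple-function approximant explicit (the standard dyadic-range construction) and names the limit interchange as an application of the Moore--Osgood theorem, but the substance of the argument is identical to yours.
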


Let us emphasise for point (3) that continuity implies boundedness for the function $\varphi$ due to compactness of the sphere.

\begin{proof}
	The equivalence of statements (1) and (2) is given by~\cite{Rvaceva}.

	The equivalence of statements (2) and (3) becomes straightforward with the help of Portemanteau's theorem (see, e.g., equivalence between (i) and (vi) in~\cite[Theorem 13.16]{Klenke}). Indeed by Portemanteau's theorem, statement (2) is saying that for all $k>0$, the measure $\mu^{(k)}_R$ converges weakly to $k^{-\alpha}h$ when $R$ goes to $\infty$ where the measures $\mu^{(k)}_R$ is defined for any Borel set $\Sigma\subset \mathbb S^{d-1}$ as
	\begin{equation}
		\mu_R^{(k)}(A)=\frac{\mathbb P(||x||>kR,\,x/||x||\in A)}{\mathbb P(||x||>R)},
	\end{equation}
	which can be restated exactly as statement (3), by using integration over continuous functions $\varphi$ (the boundedness of $\varphi$ is guarantedd by the compactness of its domain $\mathbb S^{d-1}$).
\end{proof}

Let us underline that also the domains of attraction of the stable ensembles $S(\alpha,\gamma h,y_0)$ , especially with a fixed scaling $\gamma>0$ and a fixed shift $y_0\in\R^d$ are covered by Theorem~\ref{multi_stable_law}. These two additional parameters are absorbed in the sequences $\{B_m\}_{m\in\mathbb{N}}\in\R_+$ and $\{A_m\}_{m\in\mathbb{N}}\in\R^d$, respectively. The challenge is to derive these sequences from a given probability measure $F(\dv x)$. For instance, it holds $\mathbb{E}[x]=A_mB_m/m$ when $\alpha\in(1,2)$. Once $A_m$ is computed in this way one can get $B_m$. However, for a stability exponent $\alpha\leq1$ the first moment does not exist so that one needs to find other means. 

Theorem~\ref{DoA} is essentially a rephrasing of the theorem above into a form for invariant random matrix ensembles on $\Herm(N)$, in the case $\alpha<2$. The additional content lies in the implication of the domains of attraction for the eigenvalues and the diagonal entries of the random matrix. The scaling parameter $\gamma>0$ and the shift $y_0\in\mathbb{R}$ originate in a non-optimal choice of $B_m$ and $A_m$, respectively. Then, the limiting random matrix $Y$ in~\eqref{def.CLT} is drawn from $S(\alpha, \gamma H,y_0I_N)$, or equivalently the random vector is $\gamma^{1/\alpha}Y+y_0I_N$ with $Y$ drawn from $S(\alpha, H,0)$ when $\alpha\neq1$. The latter shows that the domain of attraction is for $S(\alpha, H,0)$ and $S(\alpha, \gamma H,y_0I_N)$ the same as they only differ by a scaling and a shift. The case $\alpha=1$ is more subtle and will be discussed at the end of this section.

One should note that statement (3) of Theorem~\ref{DoA} means the sum the random vector of eigenvalues turns into the multidimensional stable distribution $S(\alpha,h_\eig,0)$. This is completely different from statement (1) where we add up matrices instead. Hence, $y$ is not the collection of all eigenvalues of the random matrix $Y$ in statement (1). Also the limiting distributions in (1) and (3) are different (c.f.~\eqref{3.1.10} where in the former the integral with measure $h_\eig$ contains terms other than $\nu_\alpha(s^\top r)$). Therefore, it comes more like a surprise that understanding the eigenvalues as an ordinary random vector in $\R^N$ which we we add up in the standard way like a vector yields the very same spectral measure $h_\eig$ which we also encounter when understanding them as the eigenvalues of Hermitian random matrices.

We believe that the additional statement about $C_m=1$ for all $m\in\mathbb{N}$ can be extended also to the case when the traces vanish. The reason is that the difference between $X$ and $x_\eig$ lies in the Haar distributed eigenvectors which are bounded random variables. We do not expect that they change anything in these constants. However, we were unable to prove this in the general setting. As the proof will show, we need a non-vanishing trace to fix essentially the scaling $C_m$ since, when taking the trace of~\eqref{conv_in_distr} and~\eqref{conv_in_distr.c}, we get expressions that are equal on the left hand side of the two equations apart from the $C_m$. When the trace vanishes we essentially multiply $C_m$  with a term that vanishes faster when $m\to\infty$ than $C_m$ can grow. This implies that the scale, on which the distribution of the trace has its support, is much smaller than the one of the traceless part of the random matrix $X$.

Before we prove Theorem~\ref{DoA} we need the following Lemma which is essentially a rewriting of~\eqref{4.1.2}. It will serve as one starting point  in the proof of some equivalences claimed in Theorem~\ref{DoA}.

\begin{lemma}[]\label{lem:invariance}\
	
	Considering the setting of Theorem~\ref{multi_stable_law} and let $\mathcal{G}$ be a compact matrix Lie group or a finite group, acting on $\R^d$ as isometries, i.e., $||gx||=1$ for all $x\in\R^d$ and $g\in\mathcal{G}$, and assume that the probability measure $F(\dv x)$ is invariant under $\mathcal{G}$, meaning $F(\dv x)=F(\dv (gx))$  for all $g\in\mathcal{G}$. Then, the statements of Theorem~\ref{multi_stable_law} implies $h$ being also invariant. 
	
	Also statement (3) of Theorem~\ref{multi_stable_law} is equivalent to the following statement:
	\begin{enumerate}
		\item[(3*)] for any $k>0$ and $\varphi$ being a continuous function on the unit sphere $\mathbb S^{d-1}\subset\R^d$, which is also invariant under $\mathcal G$, equation~\eqref{4.1.3} holds.
	\end{enumerate}
\end{lemma}

\begin{proof}
	To see the invariance of $h$, we take an arbitrary continuous function $\psi$ on $\mathbb S^{d-1}$ and by the invariance of $F$ we have
	\begin{equation}
		\int_{\|x\|>kR}\psi(x/\|x\|)F(\dv x)=\int_{\|gx\|>kR}\psi(gx/\|x\|)F(\dv gx)=\int_{\|x\|>kR}\psi(gx/\|x\|) F(\dv x)
	\end{equation}
	for any $g\in\mathcal G$ because of the group invariance of the measure $F$ and that the group elements are isometries. Hence, we have by statement (3) of Theorem~\ref{multi_stable_law} for any continuous function $\psi$ on $\mathbb S^{d-1}$,
	\begin{equation}
		\begin{split}
			k^{-\alpha}\mathbb \int_{s\in \mathbb S^{d-1}} \psi(s) h(\dv s)&=\lim_{R\to\infty}\frac{\int_{\|x\|>kR}\psi(x/\|x\|)F(\dv x)}{\mathbb P(||x||>R)}=\lim_{R\to\infty}\frac{\int_{\|x\|>kR}\psi(gx/\|x\|) F(\dv x)}{\mathbb P(||x||>R)}\\
			&=k^{-\alpha}\mathbb \int_{s\in \mathbb S^{d-1}} \psi(gs) h(\dv s)=k^{-\alpha}\mathbb \int_{s\in \mathbb S^{d-1}} \psi(s) h(g^{-1}\dv s)
		\end{split}
	\end{equation}
	which shows the weak identity $h(\dv s)=h(g^{-1}\dv s)$ for all $g\in\mathcal G$.
	
	Now we prove the equivalence of statement (3) of Theorem~\ref{multi_stable_law} and statement (3*) of Lemma~\ref{lem:invariance}. Certainly statement (3) of Theorem~\ref{multi_stable_law} implies that also group invariant continuous functions satisfy~\eqref{4.1.3}. Thus, we only need to prove the converse.
	
	We assume that Eq.~\eqref{4.1.3} is up to now only satisfied for continuous functions invariant under the group action of $\mathcal G$. Let $\dv g$ be the Haar measure for the Lie group or the uniform Dirac measure for the finite group, respectively. When choosing an arbitrary continuous function $\varphi$ on $\mathbb S^{d-1}$. Then,
	\begin{equation}
		\la \varphi(gx/\|x\|)\ra:=\int_{\mathcal G}\dv g\varphi(gx/\|x\|)
	\end{equation}
	is a continuous function, too, since the measure $\dv g$ is finite. Therefore, we have again
	\begin{equation}
		\int_{\|x\|>kR}\varphi(x/\|x\|)F(\dv x)=\int_{\|x\|>kR}\varphi(gx/\|x\|)F(\dv x)=\int_{\|x\|>kR}\la\varphi(gx/\|x\|)\ra F(\dv x).
	\end{equation}
	This implies for each continuous function $\varphi$ that we can integrate it over the group G yielding $\la\varphi(gx)\ra$; this is again a continuous function on $\mathbb S^{d-1}$. Then
	\begin{equation}
		\begin{split}
		\lim_{R\to\infty}\frac{\int_{\|x\|>kR}\varphi(x/\|x\|)F(\dv x)}{\mathbb P(||x||>R)}&=\lim_{R\to\infty}\frac{\int_{\|x\|>kR}\la\varphi(gx/\|x\|)\ra F(\dv x)}{\mathbb P(||x||>R)}\\
		&=  k^{-\alpha}\mathbb \int_{s\in \mathbb S^{d-1}} \la\varphi(gs)\ra h(\dv s)=k^{-\alpha}\mathbb \int_{s\in \mathbb S^{d-1}} \varphi(s) h(\dv s).
		\end{split}
	\end{equation}
	for an arbitrary continuous function $\varphi$ which has been the goal to show.
\end{proof}

\begin{proof}[Proof of Theorem~\ref{DoA}]
	
		First, we prove (1)$\Rightarrow$(2) which is the simplest implication. We denote the  corresponding probability measures on both sides of~\eqref{conv_in_distr} by $F^{(m)}$ and $F^{(\infty)}$, respectively, and the induced marginal measures for the diagonal entries $x_{\diag}$ of the random matrix $X$ by $f_{\diag}^{(m)}$ and $f^{(\infty)}_{\diag}$. Then, this equation can be reinterpreted in the distributional sense. This means it holds
		\begin{equation}\label{4.2.2}
			\lim_{m\rightarrow \infty}\int_{\Herm(N)} \Phi(X) F^{(m)}(\dv X)=\int_{\Herm(N)} \Phi(X) F^{(\infty)}(\dv X)
		\end{equation}
		for any bounded measurable function $\Phi$ on $\Herm(N)$. When we take a bounded measurable function $\psi$ on $\mathbb{R}^N$ and choose $\varphi(X)=i_2\psi(X)$, see~\eqref{def.i2}, statement (2) follows in the form
		\begin{equation}\label{4.2.3}
			\lim_{m\rightarrow \infty}\int_{\R^N} \psi(x_{\diag}) f_{\diag}^{(m)}(\dv x_{\diag})=\int_{\R^N} \Phi(x_{\diag}) f^{(\infty)}_{\diag}(\dv x_{\diag})
		\end{equation}
		as we only need to integrate over all off-diagonal entries. The two subsequences $\{B_m\}_{m\in\mathbb{N}}\subset\R_+$ and $\{A_m\}_{m\in\mathbb{N}}\subset\R$ are the same as we note that the diagonal entries of the left hand side of~\eqref{conv_in_distr} are given by the left hand side of~\eqref{conv_in_distr.b}. Combining this inside with Theorem~\ref{p3.1a}, in particular Eq.~\eqref{g_alpha}, for the relation of the spectral measure of the random matrix and the one for the diagonal entries we find that the measure $f^{(\infty)}_{\diag}$ over $\R^N$ generates the ensemble $S(\alpha,g_\alpha,y_1)$.
		
		To prove the converse (2)$\Rightarrow$(1), we note that the two sequences $\{B_m\}_{m\in\mathbb{N}}\subset\R_+$ and $\{A_m\}_{m\in\mathbb{N}}\subset\R$ are the same because of the very same reason as above. The goal is to reconstruct the random matrices $X_j$ and $Y$ out of the information about the diagonal entries. For this purpose we need to overcome the problem that we have no explicit inverse of the map $i_2$. The derivative principle~\eqref{p2.2} and the uniqueness of the Haar measure gives us a way to do this reconstruction, nonetheless. We first define the joint probability measure of the vectors $x_{\eig,j}$ and $y_{\eig}$ which we can interpret as the eigenvalues of the random matrices $X_j=U_j\diag(x_{\eig,j})U_j^\dagger$ and $Y=U\diag(y_{\eig})U^\dagger$ with independent Haar distributed matrices $U,U_1,U_2,\ldots\in\U(N)$. The limit is then given because the characteristic functions restricted to diagonal matrices $S={\rm diag}(s)$ agree with each other, and the limit of the former exists. The spectral measure $H$ can be extracted with the help of Theorems~\ref{p3.1} and~\ref{p3.1a}. 
		
		Next, we turn to the equivalence of (3) to the other two statements for which we need  the invariance of the random matrix $X$ under the conjugate action of $\U(N)$. This invariance implies a permutation invariance of the diagonal entries $x_{\diag}$ and eigenvalues $x_{\diag}$. Moreover, the conjugate action of $\U(N)$ is an isometry on $\Herm(N)$ and the symmetric group $S_N$ acting as permutations of the vector entries is an isometric action on $\R^N$. Thus, we can apply Lemma~\ref{lem:invariance}.
		
		Let us first show the implication of the spectral measure $H$ in (1) to the spectral measure in (3) which also proves the existence of the limit. By Theorem~\ref{multi_stable_law} part (3) and Lemma~\ref{lem:invariance}, Eq.~\eqref{conv_in_distr} is equivalent to
		\begin{equation}\label{4.2.6}
			\lim_{R\to\infty}\frac{\int_{\sqrt{\tr X^2}>kR}\Phi\left({X}/{\sqrt{\tr X^2}}\right)F(\dv X)}{\int_{\sqrt{\tr X^2}>R}F(\dv X)}= k^{-\alpha}\int_{\mathbb S_{\Herm(N)}}\Phi(S)H(\dv S)
		\end{equation}
		for any continuous  function $\Phi$ on $\mathbb S_{\Herm(N)}$ on the sphere $\mathbb S_{\Herm(N)}$  that is invariant under the conjugate action of $\U(N)$. When choosing an arbitrary continuous  function $\phi$ on $\mathbb S_{\Herm(N)}$ on the sphere $\mathbb S^{N-1}\subset\R^N$, we can choose $\Phi=i_1\phi$ which is defined by Eq.~\eqref{i_1} with Haar distributed eigenvectors of the random matrix $X$. Then, certainly $\Phi$ satisfies the condition of being bounded, measurable and invariant. We plug this particular $\Phi$ into~\eqref{4.2.6} and diagonalise the random matrix $X=U\diag(x_{\rm eig})U^\dagger$ as well as $S=V\diag(s_{\rm eig})V^\dagger$ and exploit that $U,V\in\U(N)/\U^N(1)$ are Haar distributed. This yields
		\begin{equation}\label{4.1.13}
			\lim_{R\to\infty}\frac{\int_{||x_{\rm eig}||>kR}\phi\left({x_{\rm eig}}/{||x_{\rm eig}||}\right)f_\eig(\dv x_{\rm eig})}{\int_{||x_{\rm eig}||>R}f_\eig(\dv x_{\rm eig})}= k^{-\alpha}\int\phi(s_{\rm eig})h_\eig(\dv s_{\rm eig}),
		\end{equation}
		where $f_\eig$ and $h_\eig$ are the induced measures of $F$ and $H$ for the eigenvalues of $X$ and $S$.  By Theorem~\ref{multi_stable_law} and Lemma~\ref{lem:invariance} this is equivalent with $x_{\eig}$ being a random vector in the domain of attraction of the ensemble $S(\alpha,h_\eig,0)$. 
		
		The statement about the sequences $\{A_m\}_{m\in\mathbb{N}}$ and  $\{B_m\}_{m\in\mathbb{N}}$ follows from the trace which is for all three objects $X$, $x_\diag$ and $x$ the same,
		\begin{equation}
			\tr X=\tr \diag(x_{\diag})=\tr \diag(x_{\eig}),
		\end{equation}
		as we can take the trace of any of the three equations~\eqref{conv_in_distr},~\eqref{conv_in_distr.b}, and~\eqref{conv_in_distr.c}. The trace is itself a univariate real random variable and each of the three equations say how to shift and rescale the sum of these random variables to find the univariate generalised central limit theorem. The shift $A_m$ and the rescaling $B_m$ are unique in the leading $m$ asymptotics when the scaling is $\gamma=1$ and the shift is $y_0=0$ of the corresponding stable distribution. This immediately yields the additional statement that we can choose $C_m=1$  for all $m\in\mathbb{N}$ when the traces $\tr  Y=\tr \diag(y_{\diag})=\tr \diag(y)\neq0$ do not vanish. 
		
		When the traces vanish it means that the distribution of the trace is given by  a Dirac delta distribution at the origin because the shift in the ensemble is assumed to be vanishing, i.e., $y_0=0$ for $S(\alpha,H,y_0I_N)$ and $S(\alpha,h_{\eig},y_01_N)$. Thence, the rescaling with respect to $C_m$, respectively, cannot be fixed with the help of the trace as it is multiplied to a vanishing random variable (which  is the trace). This means we cannot draw any conclusions as the scaling $C_m$ is dominated by the traceless parts of $\sum_{j=1}^mX_j$ and $\sum_{j=1}^m x_{\eig,j}$ which are irreducible representations of the conjugate action of the unitary group and of the permutation group, respectively, especially they do not mix with the trivial representations of these two groups which is given by the trace. This also implies that the shift by $A_m$ is solely due to the traces of $\sum_{j=1}^mX_j$ and $\sum_{j=1}^m x_{\eig,j}$, respectively.
		
		To prove the converse implication (3)$\Rightarrow$(1), we note that the map $i_1$ is a bijection between the set of continuous  functions on $\mathbb{S}_{\Herm(N)}$ invariant under the conjugate action of $\U(N)$ and the continuous  functions on $\mathbb{S}^{N-1}$ invariant under the symmetric group $S_N$. To see this we underline that any invariant continuous  function $\Phi$ on $\mathbb S_{\Herm(N)}$ only depends on the eigenvalues $x_{\eig}\in\mathbb{S}^{N-1}$ of a matrix $X\in\mathbb S_{\Herm(N)}$, i.e., for $X=U\diag(x_{\rm eig})U^\dagger$ with $U\in\U(N)$ it holds $\Phi(X)=\Phi(\diag(x_{\rm eig}))$. Defining $\phi(x_\eig)=\Phi(\diag(x_{\rm eig})$ yields an invariant continuous  function on $\mathbb{S}^{N-1}$ and it holds $i_1\phi=\Phi$  giving the surjectivity of $i_1$. the injectivity follows because $i_1$ is linear and applied on the zero function it yields again a zero function, anew because of~\eqref{i_1}.
		
		Thus, once we  exploit Theorem~\ref{multi_stable_law} and Lemma~\ref{lem:invariance}  to~\eqref{conv_in_distr.c} we can write for an arbitrary continuous function $\phi$ on $\mathbb{S}^{N-1}$ invariant under the symmetric group $S_N$ the integral~\eqref{4.1.13}. Moreover, we can choose $\phi=i_1^{-1}\Phi$ where $\Phi$ on $\mathbb S_{\Herm(N)}$ is an arbitrary measurable function that is bounded and invariant under the conjugate action of $\U(N)$. Explicitly, this means
		\begin{equation}\label{4.1.13.c}
			\lim_{R\to\infty}\frac{\int_{||x_{\rm eig}||>kR}\Phi\left(\diag(x)/{||x_{\rm eig}||}\right)f_\eig(\dv x)}{\int_{||x_{\rm eig}||>R}f_\eig(\dv x)}= k^{-\alpha}\int\Phi(\diag(s_{\rm eig}))h_\eig(\dv s_{\rm eig}),
		\end{equation}
		Because of the invariance we can introduce a Haar distributed unitary matrix as follows
		\begin{equation}
			\Phi\left(\diag(x)/{||x_{\rm eig}||}\right)=\int_{\U(N)/\U^N(1)}\dv U \Phi\left(U\diag(x)U^\dagger/{||x_{\rm eig}||}\right)
		\end{equation}
		and similarly for the right hand side of~\eqref{4.1.13.c}. For the measures we have the relations $F(\dv X)=f_{\rm eig}(\dv x_\eig)\dv U$ and $H(\dv S)=h_{\rm eig}(\dv s_\eig)\dv V$ when considering the decompositions  $X=U\diag(x_{\rm eig})U^\dagger$ as well as $S=V\diag(s_{\rm eig})V^\dagger$ and exploit that $U,V\in\U(N)/\U^N(1)$. In this way we end up with~\eqref{4.2.6} which is equivalent with statement (1) (with possibly different sequences $\{A_m\}_{m\in\mathbb{N}}$ and  $\{B_m\}_{m\in\mathbb{N}}$) because of Theorem~\ref{multi_stable_law} and Lemma~\ref{lem:invariance}.
		
		The sequences $\{A_m\}_{m\in\mathbb{N}}$ and  $\{B_m\}_{m\in\mathbb{N}}$ are the same as those in statement (3) because of the same reason as before, namely that the random variable of the trace is in all three statements the same.
		This concludes the proof.
\end{proof}
	
\subsection{Strict Domain of Attraction}\label{sec:strict}

We want to  briefly discuss the strict domain of attraction of stable invariant distributions. The definition is as follows.

\begin{definition}[Strict Domain of Attraction~\cite{Shimura}]\label{def:strict.attract}\

The random vector $x\in\R^d$ in Theorem~\ref{multi_stable_law}(1) is in the strict domain of attraction of the ensemble $S(\alpha,h,0)$ iff $A_m=0$.
\end{definition}

In~\cite{Shimura} such strict domains of attraction have been studied. For $\alpha>1$, the probability measure has only to satisfy $\mathbb EX=0$ which is rather clear as the first moment still exists, see~\cite[Proposition 4.2]{Shimura}.

For $0<\alpha<1$, each random vector which is in the domain of attraction of a stable distribution, especially satisfying~\eqref{4.1.2}, is also in the strict domain of attraction~\cite[Theorem 3.3]{Shimura}. This can be readily seen when choosing an arbitrary stable distribution with stability exponent $\alpha\in(0,1)$ and then take the sum of $m$ copies of the corresponding random vector. The scaling parameter is $B_m=m^{1/\alpha}$. Exponentiating its characteristic function~\eqref{3.1.1} by $m$ due to the $m$-fold convolution of the distribution and then rescaling $s\to s/m^{1/\alpha}$ we notice that the linear term $iv_0^Ts$ in~\eqref{3.1.1} gets a prefactor $m^{1-1/\alpha}$ which vanishes when taking the limit $m\to\infty$. Thus, even the stable distributions that are not strictly stable become strictly stable when we do not shift but only rescale the sum. Hence, it is no surprise that this also holds for the other random vectors that are in the domain of attraction for stable distributions with $\alpha\in(0,1)$.

 For $\alpha=1$, the sufficient and necessary condition (additional to~\eqref{4.1.2} for being in the strict domain of attraction is
\begin{equation}\label{strict}
	\lim_{R\to\infty}\frac{\int_{\|x\|<R}xF(\dv x)}{R\int_{\|x\|>R}F(\dv x)}=0.
\end{equation}
We underline that we set here $y_0=0$ and $\gamma=1$ in contrast to~\cite{Shimura} which can be always arranged after an appropriate scaling with $B_m$ and shift with $A_m$.

In the setting of $X\in\Herm(N)$ being an invariant random matrix the condition~\eqref{strict} for $\alpha=1$ reads
\begin{equation}\label{strict.matrix}
	\lim_{R\to\infty}\frac{\int_{\sqrt{\tr X^2}<R}XF(\dv X)}{R\int_{\sqrt{\tr X^2}>R}F(\dv X)}=0.
\end{equation}
Considering only the diagonal entries it is immediate what the condition for the vector $x_\diag$ looks like, namely
\begin{equation}\label{strict.matrix.diag}
	\lim_{R\to\infty}\frac{\int_{\|x_\diag\|<R}x_\diag f_\diag(\dv x_\diag)}{R\int_{\|x_\diag\|>R}f_\diag(\dv x_\diag)}=0.
\end{equation}
For the eigenvalues we decompose the random matrix $X=U\diag(x_\eig)U^\dagger$ into its Haar distributed eigenvectors $U\in\U(N)/\U^N(1)$ and its eigenvalues $x_\eig\in\R^N$. Then we can integrate over $U$ and find
\begin{equation}\label{strict.matrix.eig.c}
	\lim_{R\to\infty}\frac{\int_{\|x_\eig\|<R}\tr \diag(x_\eig) f_\eig(\dv x_\eig)}{R\int_{\|x_\eig\|>R}f_\eig(\dv x_\eig)}=0.
\end{equation}
It is the trace due to the average $\langle U\diag(x_\eig)U^\dagger\rangle=[\tr \diag(x_\eig)]I_N/N$. However, the permutation invariance of the marginal measure $f_\eig(\dv x_\eig)$ tells us that it is equivalent with
\begin{equation}\label{strict.matrix.eig.b}
	\lim_{R\to\infty}\frac{\int_{\|x_\eig\|<R} x_\eig f_\eig(\dv x_\eig)}{R\int_{\|x_\eig\|>R}f_\eig(\dv x_\eig)}=\frac{I_N}{N}\lim_{R\to\infty}\frac{\int_{\|x_\eig\|<R} \tr \diag(x_\eig) f_\eig(\dv x_\eig)}{R\int_{\|x_\eig\|>R}f_\eig(\dv x_\eig)}=0.
\end{equation}
Thus, it again agrees with the condition derived in~\cite{Shimura} as it should due to Theorem~\ref{DoA}.

\subsection{The Gaussian Case ($\alpha=2$)}\label{sec:Gauss}

The case $\alpha=2$ is as particular as the case $\alpha=1$. It is the case of Gaussian probability distributions. We will anew prove the equivalence of domains of attraction for the random matrix, its diagonal entries and its eigenvalues.

Let us briefly recall what situation in the multivariate case a random vectors $x$ distributed along the Borel probability measure $F$ belongs to the domain of attraction of a normal distribution with characteristic function $\exp(-\gamma s^\top\Sigma s/2)$ with $\gamma$ a scaling parameter and $\Sigma$ the covariance matrix, if and only if~\cite{Rvaceva,Shimura}
\begin{eqnarray}\label{4.2.1}
	\lim_{R\to0}\frac{R^2\int_{\|x\|>R}F(\dv x)}{\int_{\|x\|<R}\|x\|^2F(\dv x)}&=& 0,\\
\label{4.2.2a}
	\lim_{R\to0}\frac{\int_{\|x\|<R}(t^\top x)^2F(\dv x)}{\int_{\|x\|<R}(s^\top x)^2F(\dv x)}&=&\frac{t^\top\Sigma t}{s^\top\Sigma s}\qquad {\rm for\ all}\ s,t\in\mathbb{R}^d.
\end{eqnarray}
We would like to point out that the overall scaling $\gamma>0$ is not fixed by this classification. The reason why this is done in this way is that it can be always absorbed in the rescaling sequence $\{B_m\}_{m\in\mathbb{N}}$. Thus we can set it equal to $\gamma=1$. 
For most probability distributions one would compute the second moment to obtain the covariance matrix $\Sigma$. However, there are distributions, like the one with the density proportional to $1/(1+\|x\|^2)^{(d+2)/2}$ with $x\in\R^d$, which have no second moment but still converge to a Gaussian distribution. This is the reason why one needs to express these non-trivial conditions in terms of limits like in the heavy-tailed case with $\alpha<2$.

In the case of invariant Hermitian random matrices, we prove the following classification of the domain of attraction to a centred Gaussian which replaces Theorem~\ref{DoA} for $\alpha<2$. Note that the centred Gaussians are the strictly stable distributions for $\alpha=2$. For invariant Hermitian random matrices, the conditions~\eqref{4.2.1} and~\eqref{4.2.2a} take the form
\begin{equation}\label{4.2.1.b}
	\lim_{R\to0}\frac{R^2\int_{\sqrt{\tr X^2}>R}F(\dv X)}{\int_{\sqrt{\tr X^2}<R}\tr X^2F(\dv X)}= 0,\quad
	\lim_{R\to0}\frac{\int_{\sqrt{\tr X^2}<R}(\tr  T X)^2F(\dv X)}{\int_{\sqrt{\tr X^2}<R}(\tr  S X)^2F(\dv X)}=\frac{\sigma^2\tr  T^2+\kappa(\tr T)^2}{\sigma^2\tr  S^2+\kappa(\tr S)^2}
\end{equation}
for all $S,T\in\Herm(N)$. Representation theory dictates that there are essentially only two parameters that can create the covariance matrix, the trace and the traceless part of the Hermitian matrices. This will be at the heart of the proof for one part of the ensuing theorem.

\begin{theorem}[Gaussian Domain of Attraction]\label{DoA2}
	Let $\alpha=2$, and the characteristic function of the stable invariant ensemble $Y$ and, equivalently, of its diagonal entries has the characteristic function $\exp[-\sigma^2\|s\|^2/2-\kappa(s^\top1_N)^2/2]$ with $\sigma>0$ and $\kappa>-\sigma^2/N$. With the notations used in  Theorem~\ref{DoA}, the following statements are equivalent:
	\begin{enumerate}
		\item the invariant random matrix $X\in\Herm(N)$ belongs to the domain of attraction of the ensemble $Y$;
		\item the diagonal entries $x_\diag$ of $X$ belong to the domain of attraction of $y_\diag$;
		\item the eigenvalues $x_\eig$ of $X$ belong to the domain of attraction of the Gaussian with the characteristic function $\exp[-(N+1)\sigma^2\|s\|^2/2-(\kappa-\sigma^2)(s^\top1_N)^2/2]$, where we take the sum of $m\to\infty$ independent copies of $x_{\eig}$ like in Theorem~\ref{DoA}(3).
	\end{enumerate}
\end{theorem}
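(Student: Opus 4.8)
The plan is to reduce each of the three statements to the single pair of scalar conditions~\eqref{4.2.1.b}, invoking only the $\U(N)$-invariance of $F$, the derivative principle (Theorem~\ref{p2.2}), and two elementary group averages. Throughout I write $X=U\diag(x_\eig)U^\dagger$ with Haar-distributed eigenvectors, so that $F(\dv X)=f_\eig(\dv x_\eig)\,\dv U$, $\sqrt{\tr X^2}=\|x_\eig\|$, $\tr X=x_\eig^\top 1_N$, and the traceless part obeys $\tr\tilde X^2=\tr X^2-\tfrac{(\tr X)^2}{N}=\|x_\eig\|^2-\tfrac{(x_\eig^\top 1_N)^2}{N}$; I also set $Q(R):=\int_{\sqrt{\tr X^2}<R}\tr\tilde X^2\,F(\dv X)$, $P(R):=\int_{\sqrt{\tr X^2}<R}(\tr X)^2\,F(\dv X)$ and $D(R):=\int_{\sqrt{\tr X^2}<R}\tr X^2\,F(\dv X)=Q(R)+\tfrac1N P(R)$. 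The equivalence \textbf{(1)$\Leftrightarrow$(2)} is the Gaussian counterpart of the corresponding part of Theorem~\ref{DoA}: the diagonal of $B_m^{-1}(X_1+\cdots+X_m)-A_mI_N$ is exactly $B_m^{-1}(x_{\diag,1}+\cdots+x_{\diag,m})-A_m1_N$, so (1) forces (2) with the same $A_m,B_m$, the limit being the diagonal marginal of $Y$, whose characteristic function equals $\exp[-\tfrac12\sigma^2\|s\|^2-\tfrac12\kappa(s^\top 1_N)^2]$ by the identity $\E[\exp(i\tr XS)]=\E[\exp(ix_\diag^\top s)]$ with $s$ the eigenvalues of $S$; conversely the same identity shows the matrix characteristic function is determined at every $S$ by the diagonal one, so by L\'evy continuity (2) forces (1) with the same sequences.

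For \textbf{(1)$\Leftrightarrow$(3)} I first use that, by~\eqref{4.2.1.b}, statement (1) is equivalent to~\eqref{4.2.1} and~\eqref{4.2.2a} for $X$ on $\Herm(N)\cong\R^{N^2}$ with quadratic form $\sigma^2\tr T^2+\kappa(\tr T)^2$. Invariance of $F$ together with the Haar average over $\U(N)$ yields
\begin{equation}
\la(\tr TUXU^\dagger)^2\ra=\frac{\big(\tr T^2-\tfrac{(\tr T)^2}{N}\big)\big(\tr X^2-\tfrac{(\tr X)^2}{N}\big)}{N^2-1}+\frac{(\tr T)^2(\tr X)^2}{N^2},
\end{equation}
which may also be seen by noting this average is a $\U(N)$-invariant quadratic form in $X$ (hence a combination of $\tr\tilde X^2$ and $(\tr X)^2$) that is symmetric in $T$ and $X$ and equals $(\tr X)^2$ when $T=I_N$. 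Hence~\eqref{4.2.2a} for $X$ reduces to the existence of $\lim_R Q(R)/P(R)$ with value $\sigma^2(N^2-1)/\big(N(\sigma^2+N\kappa)\big)$, and~\eqref{4.2.1} for $X$ to $\lim_R R^2\big(\int_{\sqrt{\tr X^2}>R}F(\dv X)\big)/D(R)=0$; here $\sigma^2>0$ and $\sigma^2/N+\kappa>0$ keep both limiting coefficients nonzero. On the eigenvalue side, statement (3) is equivalent to~\eqref{4.2.1},~\eqref{4.2.2a} for $x_\eig\in\R^N$ with quadratic form $(N+1)\sigma^2\|t\|^2+(\kappa-\sigma^2)(t^\top 1_N)^2$, which I rewrite as $(N+1)\sigma^2\big(\|t\|^2-\tfrac{(t^\top 1_N)^2}{N}\big)+\big(\tfrac{\sigma^2}{N}+\kappa\big)(t^\top 1_N)^2$ to expose the \emph{same} trace-part coefficient $\sigma^2/N+\kappa$ as in (1). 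Since $f_\eig$ is permutation invariant, averaging over $S_N$ gives
\begin{equation}
\frac{1}{N!}\sum_{\rho\in S_N}(t_\rho^\top x)^2=\frac{\big(\|t\|^2-\tfrac{(t^\top 1_N)^2}{N}\big)\big(\|x\|^2-\tfrac{(x^\top 1_N)^2}{N}\big)}{N-1}+\frac{(t^\top 1_N)^2(x^\top 1_N)^2}{N^2}.
\end{equation}
Because $X=U\diag(x_\eig)U^\dagger$ sends $\tr\tilde X^2$ to $\|x_\eig\|^2-(x_\eig^\top 1_N)^2/N$ and $(\tr X)^2$ to $(x_\eig^\top 1_N)^2$ while the $\dv U$-integration is trivial, the two functionals of $f_\eig$ appearing here are literally the same $Q(R)$, $P(R)$ (and $D(R)$) as above; so~\eqref{4.2.2a} for $x_\eig$ reduces to $\lim_R Q(R)/P(R)=(N+1)\sigma^2(N-1)\,N^2/\big(N^2(\sigma^2+N\kappa)(N-1)\big)=\sigma^2(N^2-1)/\big(N(\sigma^2+N\kappa)\big)$, the identical value, and~\eqref{4.2.1} for $x_\eig$ is the identical equation. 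This proves (1)$\Leftrightarrow$(3).

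It remains to match the sequences, and here the argument copies Theorem~\ref{DoA}: the trace $\tr\big(B_m^{-1}\sum_jX_j-A_mI_N\big)=B_m^{-1}\sum_j\tr X_j-A_mN$ is one and the same real random variable in~\eqref{conv_in_distr},~\eqref{conv_in_distr.b} and~\eqref{conv_in_distr.c}, with limit a centred Gaussian of variance $N(\sigma^2+N\kappa)>0$, non-degenerate since $\kappa>-\sigma^2/N$, so $A_m$ and $B_m$ are common to all three statements; the same computation identifies $y^\top 1_N$ with this very Gaussian, whence the extra scaling $C_m$ in~\eqref{conv_in_distr.c} must tend to $1$ and may be taken equal to $1$ — unlike the $\alpha<2$ case, no non-vanishing-trace hypothesis is required here.

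I expect the main obstacle to be the constant bookkeeping in the step (1)$\Leftrightarrow$(3): one must verify that the precise covariance in statement~(3) is the unique one making~\eqref{4.2.2a} coincide with the matrix condition — that the trace-part coefficients agree at $\sigma^2/N+\kappa$, and that the traceless-part coefficients $\sigma^2$ (matrices) versus $(N+1)\sigma^2$ (eigenvalues) are compatible once the traceless dimension $N^2-1$ is replaced by $N-1$. Establishing the two averaging identities is a routine Weingarten/permutation computation, after which everything else follows.
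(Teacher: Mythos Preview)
Your proof is correct and follows essentially the same approach as the paper: both reduce (1)$\Leftrightarrow$(2) to the derivative principle/characteristic-function identity, and both handle (1)$\Leftrightarrow$(3) by splitting into trace and traceless parts, applying the $\U(N)$ Weingarten average on the matrix side and the $S_N$ average on the eigenvalue side, and then matching the resulting ratios $Q(R)/P(R)$ to identify $\tilde\sigma^2=(N+1)\sigma^2$, $\tilde\kappa=\kappa-\sigma^2$. Your presentation is slightly more streamlined (the $Q,P,D$ notation and the combined trace/traceless averaging identities), and your explicit observation that the trace Gaussian is non-degenerate so $C_m=1$ without extra hypotheses is a nice addition the paper does not spell out.
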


Again let us emphasise that the sum of the eigenvalues of the $m$ independent copies of Theorem~\ref{DoA2}(3) does not yield the eigenvalues of the sum of the $m$ independent copies of the random matrix $X$. It is, thence, not surprising that the limit does not follow the joint probability density~\eqref{alpha=2a} but  the Gaussian distribution of the diagonal entries.

\begin{proof}
	Again, (1)$\Rightarrow$(2) follows from the same reasoning as in the proof of Theorem~\ref{DoA}, namely we only need to take the diagonal entries of~\eqref{conv_in_distr}. The covariance follows from choosing diagonal matrices $T$ and $S$ in the second condition in~\eqref{4.2.1.b} and integrating over all non-diagonal entries yielding the marginal measure $f_\diag$ for the diagonal entries.  The opposite direction (2)$\Rightarrow$(1) is again a direct consequence of the derivative principle~\eqref{2.3.5} and its uniqueness~\eqref{2.3.6}.

	 To show the equivalence of (1) and (3), we split the three matrices in~\eqref{4.2.1.b} $T=T_{\rm l}+T_{\rm t}I_N/N$,  $S=S_{\rm l}+S_{\rm t}I_N/N$, and  $X=X_{\rm l}+X_{\rm t}I_N/N$ in their traces $T_{\rm t}=\tr  T$, $S_{\rm t}=\tr  S$, and $X_{\rm t}=\tr  X$ and their traceless parts $T_{\rm l},S_{\rm l},X_{\rm l}$, respectively. The same we can do for the corresponding eigenvalues $t_\eig=t_{\eig,\rm l}+t_{\eig,\rm t} 1_N/N$, $s_\eig=s_{\eig,\rm l}+s_{\eig,\rm t} 1_N/N$, and $x_\eig=x_{\eig,\rm l}+x_{\eig,\rm t} 1_N/N$ with $t_{\eig,\rm l}^\top1_N=s_{\eig,\rm l}^\top1_N=x_{\eig,\rm l}^\top1_N=0$ and $t_{\eig,\rm t},s_{\eig,\rm t},x_{\eig,\rm t}\in\R$. Due to the invariance, either under the conjugate action of $\U(N)$ on $\Herm(N)$ or the symmetric group $S_N$ on $\R^N$, it is immediate that
	 \begin{equation}
	 \begin{split}
	 \int_{\sqrt{\tr  X^2}\in(a,b)} X_{\rm t}\,X_{\rm l}\,F(\dv X)=&\frac{I_N}{N}\int_{\sqrt{\tr  X^2}\in(a,b)} X_{\rm t}\,\tr X_{\rm l}\,F(\dv X)=0,\\
	  \int_{\sqrt{\tr  X^2}\in(a,b)} x_{\eig,\rm t}\,x_{\eig,\rm l}\,f_{\eig}(\dv x_{\eig})=&\frac{1_N}{N}\int_{\|x_{\eig}\|\in(a,b)} x_{\eig,\rm t}\, x_{\eig,\rm l}^\top 1_N \,f_{\eig}(\dv x_{\eig})=0
	  \end{split}
	 \end{equation}
	 for any $0\leq a<b\leq\infty$. The measure $f_\eig$ is the induced measure for the eigenvalues of $X$. Thus, conditions~\eqref{4.2.2a} with $\Sigma$ and $F$ replaced by $\tilde{\sigma}^2I_N +\tilde{\kappa}1_N1_N^\top$ and $f_\eig$ and the second condition in~\eqref{4.2.1.b} read as follows
	 \begin{equation}\label{ave.split}
	 \begin{split}
	 \lim_{R\to0}\frac{\int_{\|x\|<R}(t_{\rm l}^\top x_{\eig,\rm l})^2f_\eig(\dv x_{\eig})+(t_{\rm t}/N)^2\int_{\|x\|<R}x_{\eig,\rm t}^2f_\eig(\dv x_{\eig})}{\int_{\|x\|<R}(s_{\rm l}^\top x_{\eig,\rm l})^2f_\eig(\dv x_{\eig})+(s_{\rm t}/N)^2\int_{\|x\|<R}x_{\eig,\rm t}^2f_\eig(\dv x_{\eig})}=&\frac{\tilde{\sigma}^2\|t_{\rm l}\|^2+(\tilde{\sigma}^2/N+\tilde{\kappa})t_{\rm t}^2}{\tilde{\sigma}^2\|s_{\rm l}\|^2+(\tilde{\sigma}^2/N+\tilde{\kappa})s_{\rm t}^2},\\
	 \lim_{R\to0}\frac{\int_{\sqrt{\tr X^2}<R}(\tr  T_{\rm l} X_{\rm l})^2F(\dv X)+(T_{\rm l}/N)^2\int_{\sqrt{\tr X^2}<R}X_{\rm t}^2F(\dv X)}{\int_{\sqrt{\tr X^2}<R}(\tr  S_{\rm l} X_{\rm l})^2F(\dv X)+(S_{\rm l}/N)^2\int_{\sqrt{\tr X^2}<R}X_{\rm t}^2F(\dv X)}=&\frac{\sigma^2\tr  T_{\rm l}^2+(\sigma^2/N+\kappa)T_{\rm t}^2}{\sigma^2\tr  S_{\rm l}^2+(\sigma^2/N+\kappa)S_{\rm t}^2}.
	  \end{split}
	 \end{equation}
	 We have not employed $s_\eig$ and $t_\eig$ here since $t=t_{\rm l}+t_{\rm t}1_N/N,s=s_{\rm l}+s_{\rm t}1_N/N\in\R^N$ with $t_{\rm l}^\top1_N=s_{\rm l}^\top1_N=0$ are not the eigenvalues of $T$ and $S$, respectively.
	 
	 Next, we perform an eigenvalue decompositions $X=U\diag(x_\eig)U^\dagger$, $S=V_1\diag(s_\eig)V_1^\dagger$ and $T=V_2\diag(t_\eig)V_2^\dagger$ with $x_\eig,s_\eig,t_\eig\in\R^N$ and $U,V_1,V_2\in\U(N)/\U^N(1)$ for the matrix integral. Since $U$ is Haar distributed it can absorb the unitary matrices $V_1$ and $V_2$, respectively. Employing~\cite[Theorem 2.1]{Benoit} with $q=d=2$ in their notation where the sum over $\sigma$, $\tau$ is over $S_2=\mathbb{Z}_2$ and $\lambda$ a sum over the two partitions $(0,\ldots,0,1,1)$ and $(0,\ldots,0,2)$, we can compute the unitary integral that is needed for the second equation in~\eqref{ave.split},
	 \begin{equation}
	 \bigl\langle(\tr  \diag(t_{\eig,\rm l}) U\diag(x_{\eig,\rm l})U^\dagger)^2\bigl\rangle=\frac{1}{N^2-1}\|t_{\eig,\rm l}\|^2\|x_{\eig,\rm l}\|^2
	 \end{equation}
	 with the notation~\eqref{am_functions}. One can also derive this formula by an explicit computation
	 \begin{equation}
	 \begin{split}
	 \bigl\langle(\tr  \diag(t_{\eig,\rm l}) U\diag(x_{\eig,\rm l})U^\dagger)^2\bigl\rangle=&\sum_{a,b,c,d=1}^Nt_{{\rm eig,l},a}t_{{\rm eig,l},c}x_{{\rm eig,l},b}x_{{\rm eig,l}, d}\langle |U_{ab}|^2|U_{cd}|^2\rangle\\
	 =&\|t_{{\rm eig,l}}\|^2\|x_{{\rm eig,l}}\|^2\bigl(\langle |U_{11}|^4\rangle-2 \langle |U_{11}|^2|U_{12}|^2\rangle+\langle |U_{11}|^2|U_{22}|^2\rangle\bigl),
	 \end{split}
	 \end{equation} 
	 where we used $t_{{\rm eig,l}}^\top1_N=x_{{\rm eig,l}}^\top1_N=0$ and the left and right $S_N$-invariance and the invariance under $U\to U^\top$ of the Haar measure of $\U(N)$. The three expectation values can be traced back to the moments of a single matrix because of
	 \begin{equation}
	 \begin{split}
	 1=&\left\langle\left(\sum_{j=1}^N|U_{1j}|^2\right)^2\right\rangle=N\left\langle|U_{11}|^4\right\rangle+N(N-1)\left\langle|U_{11}|^2|U_{12}|^2\right\rangle,\\
	\left\langle|U_{11}|^2\right\rangle=&\left\langle|U_{11}|^2\sum_{j=1}^N|U_{2j}|^2\right\rangle=\left\langle|U_{11}|^2|U_{12}|^2\right\rangle+(N-1)\left\langle|U_{11}|^2|U_{22}|^2\right\rangle.
	 \end{split}
	 \end{equation} 
	 The moment of a single matrix entry of $U\in\U(N)$ is the same as of a single component of a complex $N$-dimensional unit vector uniformly distributed on the sphere, meaning for $m\in\mathbb{N}_0$
	 \begin{equation}
	 \left\langle|U_{11}|^{2m}\right\rangle=\frac{m!(N-1)!}{(m+N-1)!}.
	 \end{equation}
	 
	 There is a similar equation when averaging over the symmetric group $S_N$ on the left hand side of the first equation in~\eqref{ave.split}, namely
	 \begin{equation}
	 \begin{split}
	 \frac{1}{N!}\sum_{\rho\in S_N}(t_{\rm l}^\top x_{\eig,\rm l,\rho})^2=&\sum_{a,b,c,d=1}^Nt_{{\rm l},a}t_{{\rm l},c}x_{{\rm eig,l},b}x_{{\rm eig,l}, d} \frac{1}{N!}\sum_{\rho\in S_N}\delta_{\rho(a)b}\delta_{\rho(c)d}\\
	 =&\|t_{{\rm l}}\|^2\|x_{{\rm eig,l}}\|^2\left(\frac{1}{N!}\sum_{\rho\in S_N}\delta_{\rho(1)1}+\frac{1}{N!}\sum_{\rho\in S_N}\delta_{\rho(1)1}\delta_{\rho(2)2}\right)\\
	 =&\frac{1}{N-1}\|t_{\rm l}\|^2\|x_{\eig,\rm l}\|^2.
	 \end{split}
	 \end{equation}
	 For this purpose, we exploit the invariance of $f_\eig$ under $S_N$. Thus the two equations read
	 \begin{equation}\label{ave.split.b}
	 \begin{split}
	 \lim_{R\to0}\frac{\frac{\|t_{\rm l}\|^2}{N-1}\int_{\|x_{\eig}\|<R}\|x_{\eig,\rm l}\|^2f_{\rm eig}(\dv x_{\eig})+\frac{t_{\rm t}^2}{N^2}\int_{\|x_{\eig}\|<R}x_{\eig,\rm t}^2f_\eig(\dv x_\eig)}{\frac{\|s_{\rm l}\|^2}{N-1}\int_{\|x_{\eig}\|<R}\|x_{\eig,\rm l}\|^2f_{\rm eig}(\dv x_{\eig})+\frac{s_{\rm t}^2}{N^2}\int_{\|x_{\eig}\|<R}x_{\eig,\rm t}^2f_\eig(\dv x_\eig)}=&\frac{\tilde{\sigma}^2\|t_{\rm l}\|^2+(\tilde{\sigma}^2/N+\tilde{\kappa})t_{\rm t}^2}{\tilde{\sigma}^2\|s_{\rm l}\|^2+(\tilde{\sigma}^2/N+\tilde{\kappa})s_{\rm t}^2},\\
	 \lim_{R\to0}\frac{\frac{\|t_{\eig,\rm l}\|^2}{N^2-1}\int_{\|x_{\eig}\|<R}\|x_{\eig,\rm l}\|^2f_{\rm eig}(\dv x_{\eig})+\frac{t_{\eig,\rm t}^2}{N^2}\int_{\|x_{\eig}\|<R}x_{\eig,\rm t}^2f_\eig(\dv x_\eig)}{\frac{\|s_{\eig,\rm l}\|^2}{N^2-1}\int_{\|x_{\eig}\|<R}\|x_{\eig,\rm l}\|^2f_{\rm eig}(\dv x_{\eig})+\frac{s_{\eig,\rm t}^2}{N^2}\int_{\|x_{\eig}\|<R}x_{\eig,\rm t}^2f_\eig(\dv x_\eig)}=&\frac{\sigma^2\|t_{\eig,\rm l}\|^2+(\sigma^2/N+\kappa)t_{\eig,\rm t}^2}{\sigma^2\|s_{\eig,\rm l}\|^2+(\sigma^2/N+\kappa)s_{\eig,\rm t}^2}.
	  \end{split}
	 \end{equation}
	 Comparison of the two equations with $t_{\rm l}=t_{\eig,\rm l}/\sqrt{N+1}$, $s_{\rm l}=s_{\eig,\rm l}/\sqrt{N+1}$, $t_{\rm t}=t_{\eig,\rm t}$, $s_{\rm t}=s_{\eig,\rm t}$, $\tilde{\sigma}^2=(N+1)\sigma^2$ and $\tilde{\kappa}=\kappa-\sigma^2$ yields the equivalence. We would like to point out that the uniqueness of the Haar measure of $\U(N)/\U^N(1)$ is also employed for this conclusion.
\end{proof}

\subsection{Heavy-tailed Random Matrices}\label{sec:heavy}

As we now have a feeling what the stable distributions and their domains of attraction are, we want to relate them to heavy-tailed random matrices. For this purpose, we define this kind of matrices, first.

\begin{definition}[Heavy-tailed Random Vectors and Matrices]\label{p3.2.2}\

	A random vector $x\in\R^d$ is called heavy-tailed if it has the finite index
\begin{equation}\label{3.2.1}
\tilde{\alpha}=\sup\left\{m\geq0\left|\E\left[\sup_{a\in\mathbb S^{d-1}}|a^\top x|^{m}\right]<\infty\right.\right\}<\infty.
\end{equation}

	A Hermitian random  matrix $X\in\Herm(N)$ is heavy-tailed when the index
\begin{equation}\label{3.2.1.b}
\tilde{\alpha}=\sup\left\{m\geq0\left|\E\left[\sup_{A\in\mathbb S_{\Herm(N)}}|\tr A X|^{m}\right]<\infty\right.\right\}<\infty.
\end{equation}
\end{definition}

These definitions are equivalent to some other statements, in particular those that involve other norms. Essentially, these equivalences only reflect that the norms generate the same topology.

\begin{lemma}[Equivalence to other norms]\label{p3.2.1}\

	Let $x\in\R^N$ be a random vector in $\R^d$, $m>0$, and $q\in[1,\infty)\cup\{\infty\}$. The following four statements are equivalent
	\begin{enumerate}
		\item $\displaystyle\E \left[\sup_{a\in\mathbb S^{d-1}}|a^\top x|^{m}\right]<\infty$;
		\item $\displaystyle\E \left[|a^\top x|^{m}\right]<\infty$ for any $a\in\R^d$;
		\item $\displaystyle\E [|x_j|^m]<\infty$ for all $j=1,\ldots,d$;
		\item $\displaystyle\E [\|x\|_q^m]<\infty$,
	\end{enumerate}
where $ \|a\|_q$ is the $l^q$-norm.
	In particular, the definition of $\tilde{\alpha}$ with each of the above norms is equivalent, i.e. $\tilde\alpha$ is the supremum of the set of the index $m$ such that any one of the previous four statements is satisfied.
\end{lemma}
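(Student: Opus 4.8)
The plan is to prove the chain of implications $(1)\Rightarrow(3)\Rightarrow(2)\Rightarrow(4)\Rightarrow(1)$, exploiting that all four conditions only compare norms, which are equivalent on the finite-dimensional space $\R^d$. Throughout, the key elementary inequalities are $|x_j|\le\|x\|_2=\sup_{a\in\mathbb S^{d-1}}|a^\top x|$ for each coordinate $j$, the bound $|a^\top x|\le\|a\|_2\|x\|_2$ (Cauchy--Schwarz), and the comparison of $l^q$ norms, namely $c_q\|x\|_2\le\|x\|_q\le C_q\|x\|_2$ for dimension-dependent constants $0<c_q\le C_q<\infty$ (valid for all $q\in[1,\infty]$). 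Since taking $m$-th powers and expectations preserves inequalities between non-negative quantities, each of these norm comparisons turns a finiteness statement for one norm into a finiteness statement for another.

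First I would show $(1)\Rightarrow(3)$: from $|x_j|^m\le \big(\sup_{a\in\mathbb S^{d-1}}|a^\top x|\big)^m$, taking expectations gives $\E[|x_j|^m]<\infty$ for each $j$. Then $(3)\Rightarrow(4)$: write $\|x\|_q^m\le C_q^m\|x\|_2^m = C_q^m\big(\sum_{j=1}^d x_j^2\big)^{m/2}$, and bound $\big(\sum_j x_j^2\big)^{m/2}\le d^{m/2}\max_j|x_j|^m\le d^{m/2}\sum_j|x_j|^m$; taking expectations and using (3) for every $j$ yields $\E[\|x\|_q^m]<\infty$. Next $(4)\Rightarrow(2)$: for fixed $a\in\R^d$ we have $|a^\top x|\le\|a\|_{q'}\|x\|_q$ by Hölder with conjugate exponent $q'$, so $\E[|a^\top x|^m]\le\|a\|_{q'}^m\,\E[\|x\|_q^m]<\infty$. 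Finally $(2)\Rightarrow(1)$: apply (2) to each standard basis vector $a=e_j$ to get $\E[|x_j|^m]<\infty$, then use $\sup_{a\in\mathbb S^{d-1}}|a^\top x|=\|x\|_2$ together with the same power-mean bound $\|x\|_2^m\le d^{m/2}\sum_j|x_j|^m$ to conclude $\E[\sup_{a\in\mathbb S^{d-1}}|a^\top x|^m]<\infty$.

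For the final assertion about $\tilde\alpha$, I would note that since statements (1)--(4) are equivalent \emph{for every fixed $m>0$}, the set $\{m\ge0:\text{(i) holds}\}$ is the same for $i=1,2,3,4$; hence its supremum---and the property of that supremum being finite, which is precisely the definition of heavy-tailedness---does not depend on which of the four formulations one uses. (A minor point: one must observe the set is automatically an interval $[0,\tilde\alpha)$ or $[0,\tilde\alpha]$ by Jensen/monotonicity of $L^p$ norms on the event $\{|a^\top x|\le1\}$ versus its complement, so ``supremum'' is unambiguous; this is standard and I would only mention it in passing.)

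I do not anticipate a genuine obstacle here---the proof is a routine exercise in comparing norms---but the one place to be slightly careful is the case $q=\infty$, where $\|x\|_\infty=\max_j|x_j|$ and the conjugate exponent in the Hölder step is $q'=1$, so $|a^\top x|\le\|a\|_1\|x\|_\infty$; this still works and in fact makes $(4)_{q=\infty}\Rightarrow(3)$ immediate since $|x_j|\le\|x\|_\infty$. The only modest bookkeeping is keeping the dimension-dependent constants $c_q, C_q$ explicit enough to see they are finite and positive, but since $d$ is fixed this is harmless.
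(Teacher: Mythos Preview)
Your proof is correct and follows essentially the same approach as the paper: both arguments reduce everything to the identity $\sup_{a\in\mathbb S^{d-1}}|a^\top x|=\|x\|_2$, the equivalence of $l^q$-norms on $\R^d$, and the elementary bound $\|x\|_2^m\le d^{m/2}\sum_j|x_j|^m$. The only cosmetic differences are that the paper runs the cycle in the order $(1)\Rightarrow(2)\Rightarrow(3)\Rightarrow(4)\Rightarrow(1)$ and uses Cauchy--Schwarz rather than the general H\"older inequality for the step involving $(4)$; also, your opening sentence announces the chain $(1)\Rightarrow(3)\Rightarrow(2)\Rightarrow(4)\Rightarrow(1)$ while you actually execute $(1)\Rightarrow(3)\Rightarrow(4)\Rightarrow(2)\Rightarrow(1)$ --- a harmless typo you should fix.
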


\begin{proof}
	The implication (1)$\Rightarrow$(2) is trivial as for $a=0$ the average vanishes and for $a\neq0$ we divide $\E \left[|a^\top x|^{m}\right]$ by the $m$th power of the Euclidean norm of $a$, and exploit $|a^\top x|^{m}/\|a\|_2^m\leq\sup_{a\in\mathbb S^{d-1}}|a^\top x|^{m}$.
	
	The claim (2)$\Rightarrow$(3) follows from choosing the vector $a$ to have a $1$ at the $j$-th position and $0$ otherwise.
	
	 For (3)$\Rightarrow$(4), one notices that
	 \begin{equation}
	 \E [\|x\|_\infty^m]=\E\left[\left(\max_{j=1,\ldots,d}|x_j|\right)^m\right]=\sum_{j=1}^d\E [|x_j|^m\chi_{x_1,\ldots,x_{d-1}<x_d}]<\sum_{j=1}^d\E [|x_j|^m]<\infty
	 \end{equation}
	 with $\chi$ the indicator function.
	 Moreover, each $l_p$-norm generates the same topology meaning that when $\E [\|x\|_\infty^m]<\infty$ it holds $\E [\|x\|_q^m]<\infty$ for all $1\leq q\leq \infty$.
	 
	This equivalence of norms allows us to choose $q=2$ when proving  (4)$\Rightarrow$(1). We make use of the Cauchy-Schwarz inequality to obtain
	\begin{equation}
	\E\left[ \sup_{a\in\mathbb S^{d-1}}|a^\top x|^{m}\right]\le \E \left[\sup_{a\in\mathbb S^{d-1}}\|a\|_2^m\|x\|_2^m\right]=\E[\|x\|_2^{m}]<\infty.
	\end{equation}
	This finishes the proof.
\end{proof}

In the case of random matrices, Lemma~\eqref{p3.2.1} with $q=2$ actually states that
\begin{equation}\label{heavy-equi}
\begin{split}
\tilde{\alpha}=&\sup\left\{m\geq0\left|\E\left[\sup_{A\in\mathbb S_{\Herm(N)}}|\tr A X|^{m}\right]<\infty\right.\right\}\\
=&\sup\left\{m\geq0\left|\E\left[|\tr A X|^{m}\right]<\infty\ {\rm for\ all}\ A\in\Herm(N)\right.\right\}\\
=&\sup\left\{m\geq0\left| \E[|X_{ab}|^{m}]<\infty\ {\rm for\ all}\ a,b=1,\ldots,N\right.\right\}\\
=&\sup\left\{m\geq0\left| \E[(\tr X^2)^{m/2}<\infty\ {\rm for\ all}\ a,b=1,\ldots,N\right.\right\}.
\end{split}
\end{equation}
with $x_\eig\in\R^N$ the eigenvalues of $X$. Thence, any of these equations can be used to find the heavy-tail exponent $\tilde{\alpha}$.

We would like to underline that $\tilde{\alpha}$ should not be confused with the stability exponent $\alpha$. As we will show at the end of this subsection, being in the domain of attraction of a stable distribution with the stability exponent $\alpha$ implies $\alpha=\tilde{\alpha}$ but the converse is not true. Indeed, $\tilde{\alpha}$ is always defined as an element of $[0,\infty)$ while $\alpha$ is only an element in $(0,2]$ and is only defined when a corresponding central limit theorem exists.

Before we come to this point, we would like to highlight that the definition of $\tilde{\alpha}$ agrees for invariant random matrices with the one of the diagonal entries or the eigenvalues.

\begin{corollary}[Equivalence to Diagonal Entries and Eigenvalues]\label{heavy_tail_ensemble}\

	Let $X$ be an invariant random matrix with its diagonal entries $x_\diag\in\R^N$ and its eigenvalues $x_\eig\in\R^N$. Then the following statements are equivalent:
	\begin{enumerate}
		\item $X$ has a heavy tail with index $\tilde{\alpha}$;
		\item $x_\diag$ has a heavy tail with index $\tilde{\alpha}$;
		\item $x_{\rm eig}$ has a  heavy tail with index $\tilde{\alpha}$.
	\end{enumerate}
\end{corollary}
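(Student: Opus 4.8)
The plan is to derive the corollary entirely from the chain of equalities already recorded in~\eqref{heavy-equi} together with Lemma~\ref{p3.2.1}, using only the elementary fact that $\E[|Z|^{m'}]\le 1+\E[|Z|^m]$ for $0\le m'\le m$ (split the expectation over $\{|Z|\le1\}$ and $\{|Z|>1\}$). Consequently, for each of the three objects the set of admissible exponents $m$ in Definition~\ref{p3.2.2} is a subinterval of $[0,\infty)$ containing $0$, so its supremum $\tilde\alpha$ is pinned down by the family of moment conditions; it therefore suffices to show that a fixed $m>0$ is admissible for $X$ if and only if it is admissible for $x_\diag$, and if and only if it is admissible for $x_\eig$.

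First I would settle $(1)\Leftrightarrow(3)$, which needs no invariance at all: for every Hermitian $X$ one has $\tr X^2=\sum_{j=1}^N x_{\eig,j}^2=\|x_\eig\|_2^2$, so by the last line of~\eqref{heavy-equi} the matrix index equals $\sup\{m\ge0:\E[\|x_\eig\|_2^m]<\infty\}$, and by Lemma~\ref{p3.2.1}(4) with $q=2$, applied to the random vector $x_\eig\in\R^N$, this is precisely the heavy-tail index of $x_\eig$. Hence the two indices coincide verbatim.

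Next I would treat $(1)\Leftrightarrow(2)$. Lemma~\ref{p3.2.1}(3) applied to $x_\diag$ identifies its index with $\sup\{m\ge0:\E[|X_{jj}|^m]<\infty\text{ for all }j\}$, while the third line of~\eqref{heavy-equi} identifies the matrix index with $\sup\{m\ge0:\E[|X_{ab}|^m]<\infty\text{ for all }a,b\}$. The inequality $\tilde\alpha(x_\diag)\ge\tilde\alpha(X)$ is immediate, since the condition defining the former requires finiteness for a subfamily of the moments appearing in the latter. For the reverse I would invoke invariance: fix $m$ with $\E[|X_{jj}|^m]<\infty$ for all $j$ and fix $a\ne b$. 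With $e_1,\ldots,e_N$ the standard basis of $\C^N$, set $v_1=(e_a+e_b)/\sqrt2$, $v_2=(e_a+ie_b)/\sqrt2$ and $P_k=v_kv_k^\dagger\in\Herm(N)$; these are rank-one orthogonal projectors, hence $P_k=W_k(e_ae_a^\dagger)W_k^\dagger$ for suitable $W_k\in\U(N)$, so by the $\U(N)$-invariance of $X$ the variable $\tr P_kX=\tr e_ae_a^\dagger\,(W_k^\dagger XW_k)$ is equal in distribution to $\tr e_ae_a^\dagger X=X_{aa}$; in particular $\E[|\tr P_kX|^m]<\infty$. A direct computation gives
\begin{equation}
\mathrm{Re}\,X_{ab}=\tr P_1X-\tfrac12(X_{aa}+X_{bb}),\qquad \mathrm{Im}\,X_{ab}=\tfrac12(X_{aa}+X_{bb})-\tr P_2X,
\end{equation}
so an elementary convexity bound for $|u+v+w|^m$ shows that $\E[|\mathrm{Re}\,X_{ab}|^m]$ and $\E[|\mathrm{Im}\,X_{ab}|^m]$ are finite, whence $\E[|X_{ab}|^m]\le 2^{m/2}\bigl(\E[|\mathrm{Re}\,X_{ab}|^m]+\E[|\mathrm{Im}\,X_{ab}|^m]\bigr)<\infty$. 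This yields $\tilde\alpha(X)\ge\tilde\alpha(x_\diag)$ and closes the equivalence.

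The only genuinely non-formal step, and hence the main (mild) obstacle, is this last one — extracting the off-diagonal moments from the diagonal ones. Here the unitary invariance is essential: the rank-one projectors $e_ae_a^\dagger$, $P_1$, $P_2$ all have the same spectrum, so invariance forces $\tr e_ae_a^\dagger X$, $\tr P_1X$, $\tr P_2X$ to share a distribution, and $X_{ab}$ is then a fixed linear combination of these three together with $X_{aa}$ and $X_{bb}$. Everything else is bookkeeping with the characterisations already established in~\eqref{heavy-equi} and Lemma~\ref{p3.2.1}.
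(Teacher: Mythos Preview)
Your proof is correct. The equivalence $(1)\Leftrightarrow(3)$ is handled exactly as in the paper, via $\tr X^2=\|x_\eig\|_2^2$ and Lemma~\ref{p3.2.1}(4).

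For $(1)\Leftrightarrow(2)$ you take a genuinely different route. The paper works with the second line of~\eqref{heavy-equi}: it diagonalises an arbitrary test matrix $A=V\diag(a_\eig)V^\dagger$, absorbs $V$ into $X$ by invariance, and obtains $\E[|\tr AX|^m]=\E[|a_\eig^\top x_\diag|^m]$ in one stroke; since every vector in $\R^N$ arises as the spectrum of some Hermitian $A$, Lemma~\ref{p3.2.1}(2) then closes the loop. Your argument instead stays with the entrywise characterisation (third line of~\eqref{heavy-equi}), getting one inequality for free and earning the converse by explicitly reconstructing each off-diagonal entry $X_{ab}$ as a linear combination of $X_{aa}$, $X_{bb}$ and the two quantities $\tr P_kX$, which by invariance share the law of $X_{aa}$. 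The paper's diagonalisation trick is more economical and scales better to other matrix spaces; your approach is more constructive and makes transparent exactly \emph{which} unitary conjugations are needed to pass from diagonal to off-diagonal control. Either is perfectly adequate here.
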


\begin{proof}
	The equivalence of (1) and (2), is clear due to the second line of~\eqref{heavy-equi}. Any $A\in\Herm(N)$ can be diagonalised and the invariance of the random matrix under the conjugate action of $\U(N)$ absorbs the eigenvectors of $A$. Thus, it holds
	\begin{equation}
	\E\left[|\tr A X|^{m}\right]= \E\left[|\tr a_\eig X|^{m}\right]=\E\left[|\,a_\eig^\top x_\diag|^{m}\right]
	\end{equation}
	with $a_\eig\in\R^N$ the eigenvalues of $A$ and $x_\diag$ the diagonal entries of $X$. This equation is exactly our claim when combined with Lemma~\ref{p3.2.1}.
	
	The equivalence of (1) and (3) is also a consequence of the invariance and the fourth line of~\eqref{heavy-equi}. Indeed the corresponding equation is now
	\begin{equation}
	\E\left[|\tr X^2|^{m/2}\right]=\E\left[\|x_\eig\|_2^{m}\right]
	\end{equation}
	in combination with the uniqueness of the Haar measure for $\U(N)/\U^N(1)$ which shows the equivalence.
\end{proof}

In the introduction, we have pointed out a particular class of invariant ensembles which have a rich analytical and algebraic structure called P\'olya ensembles. There are different kinds of these ensembles, multiplicative ones~\cite{KK2019,KK2016,FKK2021,KFI2019,Kieburg2020,KIeburg2019,KLZF2020} and additive ones~\cite{KR2016,FKK2021,Kieburg2019}.  We are naturally interested in those which are related to the additive convolution on $\Herm(N)$ that have a joint probability density of the eigenvalues~\cite[Definition 2.3]{FKK2021}
\begin{equation}
p(x)=\frac{1}{\prod_{j=0}^N j!}\Delta(x)\det[(-\partial_{x_a})^{b-1}\omega(x_a)]_{a,b=1,\ldots,N},
\end{equation}
in particular it holds $f_{\diag}(x)=\prod_{j=1}^N\omega(x_j)$ with $\omega$ a suitably differentiable and integrable probability density on $\R$ for the joint density of the diagonal entries in~\eqref{2.3.2}, see~\cite{KR2016,FKK2021,Kieburg2019}. Thus, a natural question is whether those additive P\'olya ensembles can be heavy-tailed. Unfortunately, this is not the case as the positivity of the probability density enforces an exponential tail decay as we show in the next statement.

\begin{proposition}[No-go Statement for P\'olya Ensembles]\label{Polya_no_heavy_tail}\

	For $N\ge 2$, P\'olya ensembles do not have a heavy tail.
\end{proposition}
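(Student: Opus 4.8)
The plan is to push the statement down to the seed density $\omega$ via Corollary~\ref{heavy_tail_ensemble} and then to exploit that the eigenvalue density $p$ must be pointwise nonnegative. First I would note that, by Corollary~\ref{heavy_tail_ensemble}, a P\'olya ensemble $X$ on $\Herm(N)$ is heavy-tailed if and only if its vector of diagonal entries $x_\diag$ is heavy-tailed; since $f_\diag(x)=\prod_{j=1}^N\omega(x_j)$, the entries of $x_\diag$ are i.i.d.\ with density $\omega$, and Lemma~\ref{p3.2.1} shows that the index $\tilde\alpha$ of $x_\diag$ is finite precisely when $\int_\R|t|^m\omega(t)\,\dv t=\infty$ for some $m>0$. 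So it is enough to prove that an admissible P\'olya seed $\omega$ has finite absolute moments of every order.

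Next I would extract regularity of $\omega$ from the requirement $p(x)\ge0$ on $\R^N$. The crucial case is $N=2$, where the displayed density reads
\[
p(x_1,x_2)=\tfrac12\,(x_2-x_1)\bigl(\omega'(x_1)\omega(x_2)-\omega(x_1)\omega'(x_2)\bigr),
\]
so that $p\ge0$ forces, for $x_1<x_2$ in the interior of the support of $\omega$, the inequality $(\log\omega)'(x_1)\ge(\log\omega)'(x_2)$; that is, $\log\omega$ is concave and $\omega$ is log-concave. For $N\ge3$ the positivity of the $N\times N$ determinant in $p(x)$ is only a stronger requirement on $\omega$: admissible seeds of additive P\'olya ensembles are P\'olya frequency functions (this is the source of the name, and is exactly what the determinantal/Wronskian structure encodes, cf.~\cite{KR2016,FKK2021,Kieburg2019}), and a P\'olya frequency function of order $N\ge2$ is in particular $\mathrm{PF}_2$, i.e.\ log-concave; alternatively one checks directly that the $N\times N$ positivity condition contains its $2\times2$ specialisation. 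Either way, for every $N\ge2$ the seed $\omega$ is log-concave.

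It remains to turn log-concavity into tail decay. The support of a log-concave density is an interval $(a,b)$, and $g:=\log\omega$ is concave on it; concavity makes $g$ bounded above, so $\omega$ is bounded, and if $b<\infty$ the right tail contributes finitely to all moments. If $b=+\infty$, integrability of $\omega$ forces $g(t)\to-\infty$, hence $g(t)\le g(x_1)-c\,(t-x_1)$ for $t\ge x_1$ with some constant $c>0$, so $\omega$ decays at least exponentially on the right; the left tail is symmetric. Consequently $\int_\R|t|^m\omega(t)\,\dv t<\infty$ for all $m>0$, the index of Definition~\ref{p3.2.2} is $\tilde\alpha=\infty$, and by Corollary~\ref{heavy_tail_ensemble} the P\'olya ensemble $X$ is not heavy-tailed.

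The main obstacle is the step for $N\ge3$: the $N=2$ computation is immediate, but showing that the full nonnegativity of the $N\times N$ eigenvalue density already forces at least log-concavity of $\omega$ needs either the P\'olya-frequency-function characterisation of admissible seeds from the cited works or a careful reduction of the $N\times N$ determinantal inequality to its $2\times2$ case; along the way one must also handle points where $\omega$ or $\omega'$ vanishes and be precise about the differentiability hypotheses on $\omega$.
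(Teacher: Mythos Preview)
Your proposal is correct and follows essentially the same route as the paper's proof: reduce to the seed density $\omega$ via the factorisation $f_\diag(x)=\prod_j\omega(x_j)$, invoke that admissible seeds are P\'olya frequency functions of order $N$ (cf.~\cite{KR2016,KK2016}) so that the $n=2$ case yields log-concavity of $\omega$, and then deduce exponential tail decay and hence finiteness of all moments. Your explicit use of Corollary~\ref{heavy_tail_ensemble} and Lemma~\ref{p3.2.1} for the reduction step is a bit more detailed than the paper, and your flagged ``main obstacle'' for $N\ge3$ is precisely what the paper dispatches by citing the P\'olya-frequency-function characterisation rather than by a direct reduction of the $N\times N$ determinant to the $2\times2$ case.
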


This proposition implies that the Gaussian Unitary Ensemble (GUE) is the only stable P\'olya ensemble.

\begin{proof}
	Let us recall that for a P\'olya ensemble the marginal distribution of the diagonal entries factorises~\cite[Example 1(1)]{ZK2020},
	\begin{equation}
	f_\diag(x)=\prod_{j=1}^Nw(x_j)
	\end{equation}
	with $w$ a univariate probability density $N-1$ times differentiable. In~\cite{KR2016,KK2016}, it was shown that $w$ must be a P\'olya frequency function~\cite{Karlin,Polya,Polyab,Schoenberg} of order $N$, especially it has to satisfy~\cite[Definition 2.7]{FKK2021}
	\begin{equation}
	\Delta(x)\Delta(y)\det[w(x_a-y_b)]_{a,b=1,\ldots,n}\geq0
	\end{equation}
	for all $x,y\in\R^n$ and $n=1,\ldots,N$. A consequence is that it also holds~\cite{KK2019} and~\cite[Theorem 2.9]{FKK2021}
	\begin{equation}
	\Delta(x)\det[(-\partial_{x_a})^{b-1}w(x_a)]_{a,b=1,\ldots,n}\geq0
	\end{equation}
	for all $x\in\R^n$ and $n=1,\ldots,N$. Note that there is no ordering needed of the components $x_a$ as the Vandermonde determinant $\Delta(x)$ takes the sign. For $n=2$, we obtain that $w$ is log-concave, in particular it holds
	\begin{equation}
	\partial_{x_2}\log[w(x_2)]\geq \partial_{x_1}\log[w(x_1)]
	\end{equation}
	whenever $x_1\geq x_2$. Combined with the absolute integrability of $w$ on $\R$ implying $\log[w(x)]\to-\infty$ for $|x|\to\infty$, there are two constants $C,\tau>0$ such that $w(x)\leq Ce^{-\tau |x|}$ for all $x\in\R$. Therefore, all generalised moments exist.
\end{proof}

Let us turn to the relation between stable laws and heavy tail random variables. This connection was discussed in \cite[Pg. 179 Thm. 3]{GKC}. It has been shown that if a random variable is in the domain of attraction of some stable distribution with stability exponent $\alpha$, then it has a heavy tail with index $\tilde{\alpha}$ which is at least $\alpha$. Actually, it can be strengthened to a statement that indeed $\tilde{\alpha}=\alpha$. Despite we think that this fundamental statement has been shown somewhere in the literature, we could not find it. Therefore, we prove it here for the multivariate case.

\begin{proposition}[Relation between Domains of Attraction and Heavy-Tailed Random Vectors]\label{heavy_tail}\

	If an $\R^d$ random vector $x$ is in the domain of attraction of some stable distribution with stability exponent $0\le \alpha<2$, it has heavy tail with index $\tilde{\alpha}=\alpha$.
\end{proposition}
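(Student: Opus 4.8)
The plan is to prove the two inequalities $\tilde\alpha\ge\alpha$ and $\tilde\alpha\le\alpha$ separately, both via the tail characterisation in Theorem~\ref{multi_stable_law}(2)--(3). Recall that membership of $x$ in the domain of attraction of $S(\alpha,h,0)$ is, by~\eqref{4.1.2}, equivalent to the statement that $\mathbb P(\|x\|>R)$ is regularly varying of index $-\alpha$ at infinity and that the angular part $x/\|x\|$ conditioned on $\|x\|>R$ converges weakly to $h$. By Lemma~\ref{p3.2.1} the heavy-tail index $\tilde\alpha$ may be computed from any of the equivalent norms, so I will work with $\mathbb E[\|x\|_2^m]=m\int_0^\infty R^{m-1}\mathbb P(\|x\|>R)\,\dv R$ (the layer-cake formula). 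Thus everything reduces to: if $G(R):=\mathbb P(\|x\|>R)$ is regularly varying with index $-\alpha$, then $\sup\{m\ge 0: \int_1^\infty R^{m-1}G(R)\dv R<\infty\}=\alpha$. This is a standard Karamata-type fact, and the body of the proof is to make it precise.

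For $\tilde\alpha\le\alpha$: for any $m>\alpha$ pick $\epsilon>0$ with $m-\alpha>\epsilon$; by Potter's bounds for regularly varying functions there is $R_0$ with $G(R)\ge c\,R^{-\alpha-\epsilon}$ for all $R\ge R_0$ along a suitable sequence, or more cleanly one uses that $G$ regularly varying of index $-\alpha$ with $\alpha<2$ cannot decay faster than any polynomial of degree exceeding $\alpha$; hence $\int_1^\infty R^{m-1}G(R)\dv R\ge c\int_{R_0}^\infty R^{m-1-\alpha-\epsilon}\dv R=\infty$, so $\mathbb E[\|x\|^m]=\infty$ and $m\notin$ the admissible set. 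Therefore $\tilde\alpha\le\alpha$. For $\tilde\alpha\ge\alpha$: for any $m<\alpha$ choose $\epsilon>0$ with $m<\alpha-\epsilon$; Potter's bound gives $G(R)\le C R^{-\alpha+\epsilon}$ for $R$ large, whence $\int_1^\infty R^{m-1}G(R)\dv R\le C\int_1^\infty R^{m-1-\alpha+\epsilon}\dv R<\infty$ since $m-1-\alpha+\epsilon<-1$; combined with the obvious finiteness of the integral near $0$, this gives $\mathbb E[\|x\|^m]<\infty$, so $m$ is admissible and $\tilde\alpha\ge m$ for every $m<\alpha$, i.e.\ $\tilde\alpha\ge\alpha$. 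The two bounds give $\tilde\alpha=\alpha$. The case $\alpha=0$ is degenerate and handled separately: regular variation of index $0$ does not force any polynomial moment to be finite, but the statement $\tilde\alpha=0$ is then automatic from the definition provided one checks $\mathbb E[\|x\|^m]=\infty$ for all $m>0$, which again follows since $G$ slowly varying cannot be integrated against $R^{m-1}$ for $m>0$.

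The only subtlety, which I expect to be the main obstacle, is that Theorem~\ref{multi_stable_law}(2) as stated gives the ratio limit $\mathbb P(\|x\|>kR)/\mathbb P(\|x\|>R)\to k^{-\alpha}$ for each fixed $k>0$, which is precisely regular variation of index $-\alpha$, but to run the moment estimates one needs the uniform (Potter) bounds $c_1 k^{-\alpha-\epsilon}\le G(kR)/G(R)\le c_2 k^{-\alpha+\epsilon}$ valid for all $R\ge R_0$ and $k\ge 1$; deducing these from pointwise regular variation is the content of the Uniform Convergence Theorem for regularly varying functions, which I would cite (e.g.\ from Bingham--Goldie--Teugels). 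A secondary point is that for $\alpha\in(1,2)$ one could alternatively shortcut the $\tilde\alpha\ge\alpha$ direction by noting $\mathbb E[x]$ exists, but since we must also cover $\alpha\le 1$ the regular-variation argument is the uniform route and I would present only that. Finally, to transfer this back to the matrix/eigenvalue setting for the applications, no extra work is needed here: Proposition~\ref{heavy_tail} is purely about random vectors, and Corollary~\ref{heavy_tail_ensemble} already bridges to $\Herm(N)$.
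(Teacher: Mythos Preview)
Your argument is correct. The strategy---recognising that~\eqref{4.1.2} is precisely regular variation of $G(R)=\mathbb P(\|x\|>R)$ with index $-\alpha$, and then invoking Potter's bounds together with the layer-cake identity $\E[\|x\|^m]=m\int_0^\infty R^{m-1}G(R)\,\dv R$---handles both inequalities $\tilde\alpha\ge\alpha$ and $\tilde\alpha\le\alpha$ in a uniform way. Your anticipated obstacle (upgrading pointwise regular variation to the uniform Potter bounds) is indeed the only non-trivial ingredient, and the citation to Bingham--Goldie--Teugels is appropriate.

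The paper proceeds differently. For $\tilde\alpha\ge\alpha$ it reduces to one-dimensional marginals $x_j$ (each of which is in the domain of attraction of a univariate $\alpha$-stable law) and cites \cite[p.~179, Thm.~3]{GKC} for the existence of moments of order $<\alpha$. For $\tilde\alpha\le\alpha$ it gives an elementary self-contained argument: from $\mathbb P(|X|>kR)/\mathbb P(|X|>R)\to k^{-\alpha}$ and $\delta>\alpha$ one finds $R_2$ such that $(2R)^\delta\mathbb P(|X|>2R)\ge 2^{(\delta-\alpha)/2}R^\delta\mathbb P(|X|>R)$ for $R>R_2$, then telescopes along the dyadic sequence $R,2R,4R,\ldots$ to obtain $(2^mR)^\delta\mathbb P(|X|>2^mR)\to\infty$, which via Markov's inequality forces $\E[|X|^\delta]=\infty$. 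This dyadic telescoping is essentially an \emph{ad hoc} proof of the lower Potter bound along a geometric sequence---enough for the purpose and avoiding any external reference. Your route is slicker and more systematic once one is willing to quote Potter; the paper's route is more elementary for the divergence half but trades that for a citation (to \cite{GKC}) in the convergence half.
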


\begin{proof}
	The proof consists of two parts. The first part is to show that $\alpha\leq\tilde{\alpha}$. Due to Lemma~\ref{p3.2.1}, this inequality is equivalent with
	\begin{equation}
		\E[ |x_j|^\delta]<\infty,\quad{\rm for\ all}\ j=1,\ldots,d\quad{\rm and}\quad \delta<\alpha.
	\end{equation}
	which indeed holds because each marginal distributions for the entry $x_j$ of the random vector $x\in\R^d$ is in the domain of attraction of a stable distribution with the stability exponent $\alpha$ (as a simple corollary of that fact that marginal distributions of a stable distribution is again stable with the same stability parameter see also~\cite[Thm. 2.1.2]{ST94}). Then we can use the result~\cite[Pg. 179 Thm. 3]{GKC} which states that these generalised moments exist.
	
	In the second part of the proof we show that the generalised moments do not exist for any exponent $\delta>\alpha$. This can again be reduced to the univariate case because of Lemma~\ref{p3.2.1}. Let us assume that $X\in\R$ is a scalar random variable which satisfies 
	\begin{equation}\label{4.3.7}
		\lim_{R\rightarrow \infty}\frac{\mathbb P(|X|>kR)}{\mathbb P(|X|>R)}=k^{-\alpha}
	\end{equation}
	for any $k>0$. We combine this with the Markov inequality for
	\begin{equation}
		\E \left[|X|^\delta\right]\ge (kR)^\delta\,\mathbb P(|X|> kR),
	\end{equation}
	which holds true for any $k>0$ and $R>0$. Thus multiplying~\eqref{4.3.7} with $k^\delta$ and extending the ratio by $R^\delta$ and additionally choosing $k>1$ we have
	\begin{equation}
		\lim_{R\rightarrow \infty}\frac{(kR)^\delta\,\mathbb P(|X|>kR)}{R^\delta\,\mathbb P(|X|>R)}=k^{\delta-\alpha}>k^{(\delta-\alpha)/2}.
	\end{equation}
	As we have the limit we can find an $R_{k}>0$ such that for all $R>R_{k}$, it is
	\begin{equation}\label{4.3.11}
		\frac{(kR)^\delta\,\mathbb P(|X|>kR)}{R^\delta\,\mathbb P(|X|>R)}>k^{(\delta-\alpha)/2}.
	\end{equation}
	When choosing $k=2$ we see that $R>R_2$ also encompasses $2^jR>R_2$ for any $j\in\mathbb{N}_0$. Therefore,  substituting $R\to2^j R$ and $k=2$ in~\eqref{4.3.11} we have
	\begin{equation}
		\frac{(2^{j+1}R)^\delta\,\mathbb P(|X|>2^{j+1}R)}{(2^{j}R)^\delta\,\mathbb P(|X|>2^{j}R)}>2^{(\delta-\alpha)/2}.
	\end{equation}
	This can be rewritten in terms of a telescopic product when multiplying this inequality from $j=0$ up to $j=m-1$, yielding
	\begin{equation}
		\frac{(2^{m}R)^\delta\,\mathbb P(|X|>2^{m}R)}{R^\delta\,\mathbb P(|X|>R)}>2^{(\delta-\alpha)m/2}\quad\Leftrightarrow\quad (2^{m}R)^\delta\,\mathbb P(|X|>2^{m}R)>2^{(\delta-\alpha)m/2} R^\delta\,\mathbb P(|X|>R)
	\end{equation}
	for all $R>R_2$ and $m\in\mathbb{N}$. 
	This implies for the Markov-inequality
	\begin{equation}
		\E [|X|^\delta]>(2^{m}R)^\delta\,\mathbb P(|X|>2^{m}R)>2^{(\delta-\alpha)m/2} R^\delta\,\mathbb P(|X|>R)
	\end{equation}
	for any $R>R_2$ and $m\in\mathbb{N}$. Taking the limit $m\to\infty$ on the right hand side shows that the moment is unbounded because of $\delta>\alpha$. This implies that $\alpha\geq\tilde{\alpha}$. In combination with the first part of the proof, we have $\alpha=\tilde{\alpha}$ which has been our assertion.
\end{proof}

The following corollary is the specialisation of this proposition in terms of invariant random matrix ensembles on $\Herm(N)$. One can easily see this by making use of the equivalent definition given in Proposition~\ref{heavy_tail_ensemble} part (2).

\begin{corollary}[Relation between Domains of Attraction and Heavy-Tailed Random Matrices]\label{DoA_heavy_tail}\

	An invariant random matrix ensemble that is in the domain of attraction of some stable ensemble with stability exponent $0<\alpha<2$ has a heavy tail with index $\tilde{\alpha}=\alpha$. 
\end{corollary}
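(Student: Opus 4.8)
The plan is to reduce Corollary~\ref{DoA_heavy_tail} to its vector analogue, Proposition~\ref{heavy_tail}, by descending to the diagonal entries and then invoking the structural equivalences established in Section~\ref{s4}.

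First I would take an invariant $X\in\Herm(N)$ that lies in the domain of attraction of some stable ensemble with stability exponent $\alpha\in(0,2)$. Absorbing the overall scaling and the shift (which is necessarily of the form $y_0 I_N$) into the sequences $\{B_m\}_{m\in\mathbb{N}}$ and $\{A_m\}_{m\in\mathbb{N}}$, as in the remark following Theorem~\ref{multi_stable_law}, we may assume this ensemble is $S(\alpha,H,0)$ for a normalised invariant spectral measure $H$. Then the equivalence of parts (1) and (2) of Theorem~\ref{DoA} shows that the diagonal-entry vector $x_\diag\in\R^N$ of $X$ lies in the domain of attraction of the $\R^N$-stable distribution $S(\alpha,g_\alpha,y_1 1_N)$, with $g_\alpha$ as in~\eqref{g_alpha} and $y_1$ as specified there; this is in particular a stable distribution with stability exponent $\alpha$.

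Next I would apply Proposition~\ref{heavy_tail} to the random vector $x_\diag$. That proposition is stated for attraction to \emph{some} stable distribution with exponent $0\le\alpha<2$, so it applies verbatim here; alternatively one notes that the scaling and the shift $y_1 1_N$ are irrelevant to the heavy-tail index because finiteness of $\E[|a^\top x_\diag|^m]$ is preserved under $x_\diag\mapsto\sigma x_\diag+c$ (via $|a^\top(\sigma x+c)|^m\le 2^{m-1}(\sigma^m|a^\top x|^m+|a^\top c|^m)$ and Lemma~\ref{p3.2.1}). Either way, $x_\diag$ is heavy-tailed with index $\tilde\alpha=\alpha$. Finally, Corollary~\ref{heavy_tail_ensemble}, part (1)$\Leftrightarrow$(2), transports this index back to the matrix: $X$ is heavy-tailed with the same index $\tilde\alpha=\alpha$. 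I do not anticipate a genuine obstacle, since the substantive work is carried by Proposition~\ref{heavy_tail} and the equivalences of Section~\ref{s4}; the only point requiring a word of care is the shift-and-scale invariance of the heavy-tail index just used, and the argument could equally be routed through the eigenvalues by replacing Theorem~\ref{DoA}(2) and Corollary~\ref{heavy_tail_ensemble}(2) with their parts (3), with no change in substance.
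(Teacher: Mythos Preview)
Your proposal is correct and follows essentially the route the paper indicates: reduce to the diagonal entries via Theorem~\ref{DoA}(1)$\Leftrightarrow$(2), apply Proposition~\ref{heavy_tail} to the vector $x_\diag$, and then transport the index back to $X$ using Corollary~\ref{heavy_tail_ensemble}(1)$\Leftrightarrow$(2). The only remark is that an even shorter path is available---since $\Herm(N)\cong\R^{N^2}$ and both Definition~\ref{def:stableRM} and Definition~\ref{p3.2.2} are just the vector definitions under this identification, one may apply Proposition~\ref{heavy_tail} directly with $d=N^2$---but your argument is in line with the paper's hint and is perfectly valid.
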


Finally let us point out, once again, that the condition~\eqref{4.1.2} for the domain of attraction is also referred to as regular variation~\cite{Basrak_thesis,Basrak}.  Thus, Proposition~\ref{heavy_tail} and Corollary~\ref{DoA_heavy_tail} tell us that regular variation implies a heavy tail with a very specific index. As not each measure is of regular variation it already shows that not each heavy-tailed matrix with an index $\tilde{\alpha}$ is in the domain of attraction of a stable ensemble with stability exponent $\alpha=\tilde{\alpha}$. To illustrate this we would like to consider a very simple univariate example. It is a probability measure of a sum of Dirac delta functions acting as follows on a bounded function $\varphi\in B(\R)$
	\begin{equation}
		\E[\varphi]=\sum_{j=1}^\infty 2^{-j}\varphi(2^j) \quad{\rm or\ equivalently}\quad \mathbb{P}(X=2^j)=2^{-j}\ {\rm with}\  j\in\mathbb{N}\ {\rm and\ vanishes\ otherwise}.
	\end{equation}
	It is clear that it has generalised moments up to order $1$. However it can be verified that this does not satisfy~\eqref{4.1.2}. For example, taking $\Sigma=\{1\}$ in~\eqref{4.1.2} it holds
	\begin{equation}
		\lim_{j\to\infty}\frac{\mathbb{P}(X> 2^{k+j})}{\mathbb{P}(X>2^j)}=\frac{1}{2^{\lceil k\rceil}}\neq \frac{1}{2^k}
	\end{equation}
	for any $k\geq 0$. We have exploited the floor function $\lceil k\rceil$ which yields the smallest integer which is larger than or equal to $k$. The reason of a break down of the statements above is that significant parts of probability are distributed on events that are too far apart. This prevents some kind of self-averaging effect that lies behind the generalised central limit theorems.
	
All distributions we have used so far are regularly varying when they have heavy tails.  For instance, let us consider the invariant elliptical distribution, see Proposition~\ref{prop:Gauss.ellip},
\begin{equation}\label{6.1.0}
P_p(X|y_0,\Sigma):=\int_0^\infty P(X|\sqrt{t},y_01_N,\Sigma)p(\dv t)
\end{equation}
with $\Sigma>0$ as in~\eqref{Sigma}, $y_0\in\R$, and $p$ being a Borel probability measure on $\mathbb{R}_+$. Let us denote by $\ast$ the additive convolution of two distributions, then it can be readily shown that
\begin{equation}
P_{p_1}(.|y_0,\Sigma)\ast P_{p_2}(.|y_1,\Sigma)=P_{p_1\ast p_2}(.|y_0+y_1,\Sigma).
\end{equation}
This means that the convolution of invariant elliptical random matrix ensembles can be traced back to the convolution of the corresponding univariate distribution. Therefore, when $p$ is in the domain of attraction of the univariate Levi $(\alpha/2)$-stable distribution $p_\alpha$, given by~\eqref{uniform.Levy}, then $P_p(.|y_0,\Sigma)$ is in the domain of attraction of the stable random matrix ensemble with distribution $P_{p_\alpha}(.|0,\Sigma)$. Note, that the origin of taking twice the stability exponent of $p_\alpha$ is due to the Gaussian form of the characteristic function of~\eqref{elliptic.Gaussian}, cf., Proposition~\ref{prop:Gauss.ellip}.

\section{Concluding Remarks}\label{s6}

We classified and analysed stable random matrix ensembles on $\Herm(N)$ that are invariant under the conjugate action of $\U(N)$ at fixed matrix dimension. In particular, we have traced back the suitable and necessary conditions to the corresponding marginal statistics for the eigenvalues as well as the diagonal entries. Both sets of quantities contain the full information about the spectral statistics since the eigenvectors are always distributed via the Haar measure on the coset $\U(N)/\U^N(1)$. This omission of redundant information can play a crucial role in further investigations, for instance the asymptotic analysis in the limit of large matrix dimensions $N\to\infty$. Although the highly involved expression of the characteristic function must be first overcome. Specific spectral quantities are macroscopic level density as well as the local spectral statistics like the level spacing distribution.

A first full classification of  the macroscopic level density that correspond to free variables has been already performed in~\cite{BPB99}. The point is that one needs almost surely asymptotically free stable random matrices, see~\cite[Chapter 4.1]{MS2017} for the definition, to be sure to find these kinds of level densities. For instance, some elliptical stable invariant random matrix ensembles do not belong to this class as has been discussed in~\cite[Section 4.2]{KM2021}. Then, one obtains deviations from the free probability results. Nevertheless, the macroscopic level density is still stable under matrix addition though it is not freely stable any more. Therefore, one needs to be careful to identify all stable random matrix ensembles we have classified in the present work with freely stable ones. The latter will be only a subset of all stable ones, and it is a task for further investigations to identify the necessary and sufficient conditions that the spectral measure has to satisfy for this subset. An intimately related question will be about the domain of attraction of this subset. Certainly this would be even more involved as double scaling limits between the number of matrices $m$ added and the matrix dimension $N$ are possible. It is quite likely that other emergent spectral statistics are possible that can be approached that are not covered when taking first $m\to\infty$ (as we have done in the present work) and, then, $N\to\infty$. Especially  the phenomena of an emerging non-trivial mesoscopic statistics observed in~\cite{KM2021} seems to corroborate this guess.

Another direction of study is to generalise the discussion in the present article to the other nine symmetry classes in the Altland-Zirnbauer classification scheme~\cite{AZ,Zirn}. Indeed it seems to be rather straightforward to three of those, namely random matrices invariant under their respective conjugate group actions that are imaginary anti-symmetric, Hermitian anti-self-dual, and Hermitian chiral. The reason is that there are derivative principle for all three classes~\cite{ZK2020}. Those can be traced back to known group integrals. In the present case of invariant Hermitian matrices this has been the Harish-Chandra-Itzykson-Zuber integral~\cite{Harish-Chandra,Itzykson-Zuber}. For the other classes the corresponding group integrals are only explicitly known for very particular cases so that it is very hard to reduce the random matrix statistics to the eigenvalues.

In~\cite{Gangolli} and subsequent works~\cite{Applebaum,Liao}, studies for infinitely divisible distributions on symmetric spaces have been performed. In some special cases, those distributions can be understood as being invariant under certain Lie group action. They have shown theorems similar to Theorem~\ref{p3.1}, that invariant infinitely divisible distributions also have invariant parameters. It is well-known that the stable distribution is a special kind of infinitely divisible distributions. Therefore, it is curious to see whether Theorem~\ref{p3.1} fit into their studies, and moreover, whether it is adaptable to the generalisations of our results to the Altland-Zirnbauer classification scheme.

\begin{acks}[Acknowledgments]
 We thank Zdzis\l aw Burda, Tianshu Cong and Holger K\"osters for fruitful discussion. We especially thank Holger K\"osters for proofreading the first version. We also thank Tianshu Cong for  ideas to prove Theorem~\ref{DoA}. Additionally, we are grateful for the very detailed feedback of the anonymous referee.
\end{acks}

\begin{funding}
JZ acknowledges financial support of a Melbourne postgraduate award, an ACEMS top up scholarship and FWO Flanders projects EOS 30889451. MK is supported by  the Australian Research Council via the grant DP210102887.
\end{funding}

\, 
\end{document}